\documentclass[onecolumn,final]{elsart3p}
\usepackage{comment}
\usepackage{amsmath}       
\usepackage{amssymb}       
\usepackage{chapterbib}    
\usepackage{color}
\usepackage{comment}
\usepackage{bm,bbm}
\usepackage{overpic}
\usepackage{times}
\usepackage{epsfig}
\usepackage{graphicx,graphics,rotating}
\usepackage{subfigure}
\usepackage{multicol}
\usepackage{multirow}

\theoremstyle{plain}
\newtheorem{theorem}{Theorem}[section]
\newtheorem{lemma}[theorem]{Lemma}

\theoremstyle{definition}

\newtheorem{remark}[theorem]{Remark}

\theoremstyle{plain}

\definecolor{MyDarkGreen}{rgb}{0,0.45,0}

\def\trait #1 #2 #3 {\vrule width #1pt height #2pt depth #3pt}
\def\fin{\hfill
        \trait .3 5 0
        \trait 5 .3 0
        \kern-5pt
        \trait 5 5 -4.7
        \trait 0.3 5 0
\medskip}

\newenvironment{proof}{\textit{Proof.}}{\fin}


\newcommand{\VEM}{\text{VEM}}


\newcommand{\KER} {\textrm{ker}}
\newcommand{\INTP}{\footnotesize{\texttt{I}}}
\newcommand{\SPAN}[1]{\mbox{\textrm{span}}\big\{#1\big\}}
\newcommand{\REAL}{\mathbbm{R}}

\newcommand{\ASSUM}[1]{\textbf{(#1)}}



\newcommand{\nv}{\mathbf{n}}

\newcommand{\vv}{\mathbf{v}}
\newcommand{\wv}{\mathbf{w}}
\newcommand{\xv}{\mathbf{x}}
\newcommand{\yv}{\mathbf{y}}


\newcommand{\as}{a}
\newcommand{\bs}{b}

\newcommand{\fs}{f}

\newcommand{\ms}{m}

\newcommand{\qs}{q}

\newcommand{\us}{u}
\newcommand{\vs}{v}
\newcommand{\ws}{w}
\newcommand{\xs}{x}




\newcommand{\HONE}  {H^1}
\newcommand{\HONEzr}{H^1_0}
\newcommand{\HONEnc}{H^{1,nc}}

\newcommand{\LTWO}  {L^2}

\newcommand{\HS}[1] {H^{#1}}

\newcommand{\VS}[1] {V^{#1}}
\newcommand{\Vspace}{\VS{}}

\newcommand{\PS}[1] {\mathbbm{P}_{#1}}


\renewcommand{\P} {\textsf{P}}            
\newcommand  {\F} {f}
\newcommand  {\E} {e}
\newcommand  {\T} {T}
\renewcommand{\S} {\sigma} 
\newcommand  {\Psgm}{\P_{\S}}





\newcommand{\hh}{h}
\newcommand{\Th}{\Omega_{\hh}}

\newcommand{\xvP}{\xv_{\P}}        
\newcommand{\xvF}{\xv_{\F}}        
\newcommand{\xvE}{\xv_{\E}}        
\newcommand{\xvS}{\xv_{\S}}        

\newcommand{\DIM} {d}              
\newcommand{\INT} {0}              
\newcommand{\BND} {\partial}       

\newcommand{\hP}{\hh_{\P}}
\newcommand{\hF}{\hh_{\F}}
\newcommand{\hE}{\hh_{\E}}

\newcommand{\hS}{\hh_{\S}}

\newcommand{\mP}{\ABS{\P}}
\newcommand{\mF}{\ABS{\F}}

\newcommand{\mS}{\ABS{\S}}

\newcommand{\Pset}{\mathcal{P}}    
\newcommand{\Fset}{\mathcal{F}}    
\newcommand{\Eset}{\mathcal{E}}    
\newcommand{\Sset}{\mathcal{S}}    

\newcommand{\NMB}{N}
\newcommand{\NP}{\NMB^{\Pset}}   
\newcommand{\NS}{\NMB^{\Sset}}   




\newcommand{\dx}{\,d\xv}






\newcommand{\norE} {\mathbf{n}_{\E}}
\newcommand{\norF} {\mathbf{n}_{\F}}
\newcommand{\norP} {\mathbf{n}_{\P}}
\newcommand{\norS} {\mathbf{n}_{\S}}






\newcommand{\uss} {\us^{s}}
\newcommand{\ussh}{\ush^{s}}
\newcommand{\ussht}{\widetilde{\us}_{\hh}^{s}}
\newcommand{\ush}{\us_{\hh}}
\newcommand{\usht}{\widetilde{\us}_{\hh}}

\newcommand{\vsh}{\vs_{\hh}}

\newcommand{\vsS}{\vs_{\S}}
\newcommand{\vsI}{\vs^{\INTP}}

\newcommand{\wsh}{\ws_{\hh}}
\newcommand{\wsht}{\widetilde{\ws}_{\hh}}







\newcommand{\vvh}{\vv_{\hh}}

\newcommand{\wvh}{\wv_{\hh}}











\newcommand{\asP}{\as^{\P}}
\newcommand{\bsP}{\bs^{\P}}

\newcommand{\ash}{\as_{\hh}}
\newcommand{\bsh}{\bs_{\hh}}

\newcommand{\ashP}{\as^P_{\hh}}
\newcommand{\bshP}{\bs^P_{\hh}}

\newcommand{\bsht} {\widetilde{\bs}_{\hh}}
\newcommand{\bshtP}{\widetilde{\bs}^{\P}_{\hh}}

\newcommand{\SP} {S^{\P}}
\newcommand{\SPt}{\widetilde{S}^{\P}}


\newcommand{\nlen}{\hspace{-0.2mm}}

\newcommand{\snorm}  [2]{|#1|_{#2}}

\newcommand{\norm}   [2]{|\nlen|#1|\nlen|_{#2}}

\newcommand{\NORM}   [2]{\left|\nlen\left|#1\right|\nlen\right|_{#2}}

\newcommand{\abs}    [1]{|#1|}

\newcommand{\ABS}    [1]{\left|#1\right|}

\newcommand{\jump}[1]{\lbrack\!\lbrack\,#1\,\rbrack\!\rbrack}

\newcommand{\Vhk} {\VS{\hh}_{k}}
\newcommand{\Vhkt}{{\widetilde{V}}^{\hh}_{k}}

\newcommand{\calM} [1]{\mathcal{M}_{#1}}
\newcommand{\calMs}[1]{\mathcal{M}^{*}_{#1}}

\newcommand{\Pin}[1]{\Pi^{\nabla, \P}_{#1}}
\newcommand{\Piz}[1]{\Pi^{0, \P}_{#1}}
\newcommand{\PiSz}[1]{\Pi^{0,\S}_{#1}}



\newcommand{\calNh}{\mathcal{N}_{\hh}}

\newcommand{\cbot}{c_*}
\newcommand{\ctop}{c^*}
\newcommand{\cttop}{\widetilde{c}^*}
\newcommand{\ctbot}{\widetilde{c}_*}

\newcommand{\sumP}{\sum_{\P\in\Th}}

\newcommand{\restrict}[2]{{#1}_{|{#2}}}

\newcommand{\EOD}{

\begin{document}

\begin{frontmatter}

  \title{The nonconforming virtual element method \\for eigenvalue
    problems}
  
  \author[UNIPV] {F. Gardini}
  \author[LANL]  {, G. Manzini}
  \author[UNIMIB]{, and G. Vacca}

  \address[UNIPV]{
    Dipartimento di Matematica \emph{F. Casorati},
    Universit\`a di Pavia,
    Via Ferrata, 5 - 27100 Pavia, Italy;
    \emph{e-mail: francesca.gardini@unipv.it}
  }
  \address[LANL]{
    Group T-5,
    Theoretical Division,
    Los Alamos National Laboratory,
    Los Alamos, New Mexico - 87545, USA;
    \emph{e-mail: gmanzini@lanl.gov}
  }
  \address[UNIMIB]{
    Dipartimento di Matematica e Applicazioni, 
    Universit\`a di Milano Bicocca, 
    Via R. Cozzi, 55 – 20125 Milano, Italy;
    \emph{e-mail: giuseppe.vacca@unimib.it}
  }

  \begin{abstract}
    We analyse the nonconforming Virtual Element Method (VEM)
    for the approximation of elliptic eigenvalue problems.
    The nonconforming VEM allow to treat in the same formulation the two- and three-dimensional case. 
    We present two possible formulations of the discrete problem, derived  respectively by the nonstabilized and stabilized approximation of the $L^2$-inner product, and we study the convergence
properties of the corresponding discrete eigenvalue problem.
    The proposed schemes provide a correct approximation of the
    spectrum, in particular we prove optimal-order error estimates for the
    eigenfunctions and the usual double order of convergence of the eigenvalues.
    Finally we show a large set of numerical tests supporting the
    theoretical results, including a comparison with the conforming
    Virtual Element choice.

  \end{abstract}

  \begin{keyword}
    nonconforming virtual element,
    eigenvalue problem,
    polygonal mesh
  \end{keyword}

\end{frontmatter}

\maketitle
\raggedbottom

\section{Introduction}\label{sec:Introduction}
The Virtual Element Method (in short, VEM) was introduced in
\cite{BeiraodaVeiga-Brezzi-Cangiani-Manzini-Marini-Russo:2013} as a
generalization of the finite element method to arbitrary polygonal and
polyhedral meshes and as a variational reformulation of the Mimetic Finite
Difference (MFD)
method~\cite{BdV-Lipnikov-Manzini:book,Lipnikov-Manzini-Shashkov:2014}.
The main idea behind VEM is that the approximation spaces consist of
the usual polynomials and additional nonpolynomial functions that
locally solve suitable differential problems.
Consequently, the virtual functions are not explicitly known
pointwisely (hence the name virtual), but only a limited set of
information about them are at disposal.
Nevertheless, the available information is sufficient to construct
the discrete operators and the right-hand side. 
Indeed, the VEM does not require the evaluation of test and trial
functions at the integration points, but uses suitable projections
onto the space of piecewise polynomials that are exactly computable from the
degrees of freedom.
Therefore, the approximated discrete bilinear forms require only
integration of polynomials on each polytopal element in order to be
computed, without the need to integrate complex non-polynomial
functions on the elements and without any loss of accuracy. 
Morover the VEM 
can be easily applied to three dimensional problems and can handle non-convex (even
non simply connected) elements
\cite{Ahmad-Alsaedi-Brezzi-Marini-Russo:2013,Dassi:2017}. 
The Virtual Element Method has been developed
successfully for a large range of mathematical and engineering problems
\cite{
Stokes:divfree,%
BeiraodaVeiga-Manzini:2015,
Cangiani-Manzini-Russo-Sukumar:2015,%
Benedetto-Berrone:2016,%
Wriggers:2016,%
Chi-BdV-Paulino:2017,%
Vacca:2017,%
ADLP-HR,%
Vacca:2018}. Finally, high-order and higher-order continuity schemes 
have been presented in~\cite{Chernov-BdV-Mascotto-Russo:2016} and 
~\cite{Brezzi-Marini:2013,BdV-Manzini:2013,Antonietti-BdV-Scacchi-Verani:2016}, 
respectively.

The present paper focuses on the nonconforming VEM for the
approximation of the second-order elliptic eigenvalue problem.
The main advantage of nonconforming VEM introduced in
\cite{Ayuso-Lipnikov-Manzini:2016} is to cover ``\textit{in one
shot}'', i.e., using the same formulation, the two- and
three-dimensional case.
We recall that for the nonconforming methods, the approximating space
is not a subspace of the solution space.
In particular, for second-order elliptic problems we do not require
the $H^1$ regularity of the global discrete space as for the
conforming schemes, but we just impose that the moments, up to a
certain order, of the jumps of the discrete space functions across all
mesh interfaces are zero.

The nonconforming VEM has been applied successfully to the general
second-order elliptic problem \cite{Cangiani-Manzini-Sutton:2017}, the
Stokes equation \cite{Cangiani-Gyrya-Manzini:2016}, the biharmonic
problem \cite{Antonietti-Manzini-Verani:2018}, and the nonconforming
approach has been recently extended to the $h$- and $p$-version of the
harmonic VEM~\cite{Mascotto-Perugia-Pichler:2018}.
As first observed in \cite{Ayuso-Lipnikov-Manzini:2016} and recently
investigated in~\cite{DiPietro-Droniou-Manzini:2018}, the
nonconforming VEM coincides with the high-order MFD method proposed
in~\cite{Lipnikov-Manzini:2014}.

In the present paper we study the nonconforming VEM for the
approximation of the Laplace eigenvalue problem.
Using the nonconforming virtual space introduced in
\cite{Ayuso-Lipnikov-Manzini:2016}, we introduce two approximated
bilinear forms, one stands for the discrete grad-grad form and the
other one stands for the discrete version of the $L^2$-inner product.
In particular, for the $L^2$-inner product, we consider both a
nonstabilized form and a stabilized one, and we study the convergence
properties of the corresponding discrete formulations.
It is shown that the Virtual Element Method provides optimal
convergence rates both in the approximations of eigenfunctions and the eigenvalues.

We remark that the conforming VEM formulation has been proposed for
the approximation of the Steklov eigenvalue problem
\cite{Mora-Rivera:2015, Mora-Rivera:2017}, the Laplace eigenvalue
problem \cite{Gardini-Vacca:2017},  the acoustic vibration problem~\cite{Beirao-Mora-Rivera-Rodriguez:2017}, 
and the vibration problem of Kirchhoff plates~\cite{Mora-Rivera-Velasquez:2017},
whereas~\cite{Cangiani-Gardini-Manzini:2011} deals with the Mimetic Finite
Difference approximation of the eigenvalue problem in mixed form. 

The outline of the paper is as follows. 
In Section~\ref{sec:Continuous:problem} we formulate the model Laplace
eigenvalue problem.
In Section~\ref{sc:spaces} we introduce the broken Sobolev spaces
(with respect to the polygonal decompositions) and we define the
conformity error. Moreover we recall the definition of the nonconforming
Virtual Element Spaces and their degrees of freedom.
In Section~\ref{sc:approx} we construct the approximated bilinear
forms and we state the nonconforming virtual problem.
In Section~\ref{sc:compact} we recall some fundamental results for the
spectral approximation of compact operators.
In Section~\ref{sc:analysis} we show the optimal rate of convergence
of the method by proving the {\itshape priori} error estimates for the
eigenvalues and eigenfunctions.
Section~\ref{sc:tests} presents several numerical tests.
Finally, in Section~\ref{sc:conclusions} we offer our final remarks
and conclusions.



\section{The continuous eigenvalue problem}
\label{sec:Continuous:problem}

In this section we describe the continuous eigenvalue problem and its
associated source problem.
Throughout the paper, we use the notation of Sobolev spaces, norms and
seminorms detailed in~\cite{Adams:1975}.
In particular, the symbols $\snorm{\cdot}{s,\omega}$ and
$\norm{\cdot}{s,\omega}$ are the seminorm and the norm of the Sobolev
space $\HS{s}(\omega)$ defined on the open bounded subset $\omega$ of
$\REAL^{\DIM}$, and $(\cdot,\cdot)_{\omega}$ is the $\LTWO$-inner
product.
If $\omega$ is the whole computational domain $\Omega$, the subscript may
be omitted and we may denote the Sobolev seminorm and norm by
$\snorm{\cdot}{s}$ and $\norm{\cdot}{s}$, and the $L^2$-inner product
by $(\cdot,\cdot)$.

\medskip
Let $\Omega\subset\REAL^{\DIM}$ for $\DIM=2,3$ be an open
polytopal domain with Lipschitz boundary $\Gamma$.
Consider the eigenvalue problem: \emph{Find
  $\lambda \in\REAL$, such that there exists $\us \neq 0$}:
\begin{align}
  -\Delta\us &= \lambda\us\phantom{0} \quad\textrm{in~}\Omega,\label{eq:mod00:A}\\
  \us        &= 0\phantom{\lambda\us} \quad\textrm{on~}\Gamma.\label{eq:mod00:B}
\end{align}
Its variational formulation 
reads as:
\emph{Find $(\lambda,\us)\in\REAL\times\Vspace$, $\NORM{u}{0} = 1$, such
  that}
\begin{equation}\label{eq:eigPbm}
  \as(\us,\vs)=\lambda\bs(\us,\vs)\qquad\forall\vs\in\Vspace,
\end{equation}
where $\Vspace=\HONEzr(\Omega)$, the bilinear form
$\as:\Vspace\times\Vspace\to\REAL$ is given by
\begin{align}
  \as(\us,\vs)=\int_{\Omega}\nabla\us\cdot\nabla\vs\, {\rm d}\Omega\qquad\forall\us,\,\vs\in\Vspace,
  \label{eq:exact:a}
\end{align}
and the bilinear form $\bs:\Vspace\times\Vspace\to\REAL$ is the
$\LTWO$-inner product on $\Omega$, i.e., 
\begin{align}
	\bs(\us,\vs)=(\us,\vs) \qquad\forall\us,\,\vs\in\Vspace \,.
	\label{eq:exact:b}
\end{align}
The eigenvalues of problem~\eqref{eq:eigPbm} form a positive
increasing divergent sequence and the corresponding eigenfunctions are
an orthonormal basis of $\Vspace$ with respect both to the $\LTWO$-inner
product and the scalar product associated with the bilinear form
$\as(\cdot,\cdot)$. Moreover, each eigenspace has finite dimension ~\cite{Boffi:2010,Babuska-Osborn:1991}.

\medskip
In the spectral convergence analysis, we will need the approximation
results of the source problem associated with~\eqref{eq:eigPbm}, which
we state as follows:
\emph{Find $\uss\in\Vspace$ such that}
\begin{equation}\label{eq:sourcePbm}
  \as(\uss,\vs)=\bs(\fs,\vs)\quad\forall\vs\in\Vspace,
\end{equation}
where we assume the forcing term $\fs$ (at least) in $\LTWO(\Omega)$.
%
%
Due to regularity results~\cite{Agmon:1965,Grisvars:1992}, there exists a constant
$r>1/2$ that depends only on $\Omega$ such that the solution $\uss$
belongs to the space $\HS{1+r}(\Omega)$.
If $\Omega$ is a convex polytopal domain, then $r\geq
1$.
Instead, $r\geq\pi/\omega-\varepsilon$ for any $\varepsilon >0$ if
$\Omega$ is a two-dimensional non-convex polygonal domain with maximum
interior angle $\omega<2\pi$.
A similar result holds in the three-dimensional case,  $\omega$ being  the maximum
reentrant wedge angle.
Eventually, there exists a positive constant $C$ such that
\begin{equation}
  \snorm{ \uss }{1+r}\leq C\norm{\fs}{0} \, .
  \label{eq:regularity}
\end{equation}

\section{The nonconforming virtual element method} 
\label{sc:spaces}

In this section, we introduce the family of mesh decompositions of the
computational domain and the mesh regularity assumptions and, then, we
define the non-conforming virtual element space (and related
approximation properties) that we need for the proper formulation of
the virtual element method, cf.~\cite{Ayuso-Lipnikov-Manzini:2016}.

\subsection{Mesh definition and regularity assumptions}
Let $\mathcal{T}=\{\Th\}_{\hh}$ be a family of decompositions of
$\Omega$ into nonoverlapping polytopal elements $\P$ with
nonintersecting boundary $\partial\P$, center of gravity $\xvP$,
$\DIM$-dimensional measure $\mP$, and diameter
$\hP=\sup_{\xv,\yv\in\P}\abs{\xv-\yv}$.
The subindex $\hh$ that labels each mesh $\Th$ is the maximum of the
diameters $\hP$ of the elements of that mesh.
The boundary of $\P$ is formed by straight edges when $\DIM=2$ and
flat faces when $\DIM=3$.
The midpoint and length of each edge $\E$ are denoted by $\xvE$ and
$\hE$, respectively.
The center of gravity, diameter and area of each face $\F$ are denoted
by $\xvF$, $\hF$, and $\mF$, respectively.
Sometimes we may refer to the geometric objects forming the elemental
boundary $\partial\P$ by the term \emph{side} instead of
\emph{edge}/\emph{face}, and adopt a unified notation by using the
symbol $\S$ instead of $\E$ or $\F$ regardless of the number of
spatial dimensions.
Accordingly, $\xvS$, $\hS$, and $\mS$ denote the center of gravity,
diameter, and measure of side $\S$.

We denote the unit normal vector to the elemental boundary
$\partial\P$ by $\norP$, and the unit normal vector to edge $\E$, face
$\F$ and side $\S$ by $\norE$, $\norF$, $\norS$, respectively.
Each vector $\norP$ points out of $\P$ and the orientation of $\norE$,
$\norF$, and $\norS$ is fixed \emph{once and for all} in every mesh
$\Th$.
Finally, $\Eset_{\hh}$, $\Fset_{\hh}$, and $\Sset_{\hh}$ denote the
set of edges, faces, and sides of the skeleton of $\Th$.
We may distinguish between \emph{internal} and \emph{boundary} sides
by using the superscript $\INT$ and $\BND$.
Therefore, $\Sset_{\hh}^{\INT}$ is the set of the internal sides,
$\Sset_{\hh}^{\BND}$ the set of the boundary sides, and, obviously,
$\Sset_{\hh}^{\INT}\cap\Sset_{\hh}^{\BND}=\emptyset$ and
$\Sset_{\hh}=\Sset_{\hh}^{\INT}\cup\Sset_{\hh}^{\BND}$.

\medskip 
Now, we state the mesh regularity assumptions that are required for
the convergence analysis.
Since the method cannot be used simultaneously for $\DIM=2$ and
$\DIM=3$ and, hence, no ambiguity is possible in such sense, we may
refer to the two- and three-dimensional case using the same label
\ASSUM{A0} and the same symbol $\rho$ to denote the \emph{mesh
  regularity constant}.
Note that the assumptions for $\DIM=2$ can be derived from those for
$\DIM=3$ by reducing the spatial dimension.

\medskip
\noindent
\ASSUM{A0}~\textbf{Mesh regularity assumptions}.

\begin{itemize}
  \medskip
\item $\mathbf{\DIM=3}$.
  There exists a positive constant $\varrho$ independent of $\hh$ (and, hence, of
  $\Th$)  such that for every polyhedral element $\P\in\Th$ it
  holds that
  \begin{description}
  \item[$(i)$] $\P$ is star-shaped with respect to a ball with radius $\ge\varrho\hP$;
  \item[$(ii)$] every face $\F\in\P$ is star-shaped with respect to a
    disk  with radius $\ge\varrho\hF$;
  \item[$(iii)$] for every edge $\E\in\partial\F$ of every face
    $\F\in\partial\P$ it holds that $\hE\geq\varrho\hF\geq\varrho^2\hP$.
  \end{description}

  \medskip
\item $\mathbf{\DIM=2}$.
  There exists a positive constant $\varrho$ independent of $\hh$ (and, hence, of
  $\Th$) such that for every polygonal element $\P\in\Th$ it
  holds that
  \begin{description}
  \item[$(i)$] $\P$ is star-shaped with respect to a disk  with radius $\ge\varrho\hP$;
  \item[$(ii)$] for every edge $\E\in\partial\P$ it holds that
    $\hE\geq\varrho\hP$.
  \end{description}
\end{itemize}

\medskip
\begin{remark}
  The star-shapedness property implies that elements and faces are
  \emph{simply connected} subsets of $\REAL^{\DIM}$ and
  $\REAL^{\DIM-1}$, respectively.
  The scaling assumption implies that the number of edges and faces in
  each elemental boundary is uniformly bounded over the whole mesh
  family $\mathcal{T}$.
\end{remark}

\subsection{Basic setting}
We introduce the broken Sobolev space for any $s>0$
\begin{align*}
  \HS{s}(\Th) 
  = \prod_{\P\in\Th}\HS{s}(\P) 
  = \big\{\,\vs\in\LTWO(\Omega)\,:\,\restrict{\vs}{\P}\in\HS{s}(\P)\,\big\}, 
\end{align*}
and define the broken $\HS{s}$-norm
\begin{align}
  \label{eq:Hs:norm-broken}
  \norm{\vs}{s,\hh}^2 = \sum_{\P\in\Th}\norm{\vs}{s,\P}^{2}
  \qquad\forall\,\vs\in\HS{s}(\Th),
\end{align}
and for $s=1$ the broken $H^{1}$-seminorm
\begin{align}
  \label{eq:norm-broken}
  \snorm{\vs}{1,h}^2 = \sum_{\P\in\Th}\norm{\nabla\vs}{0,\P}^{2} 
  \qquad\forall\,\vs\in\HONE(\Th).
\end{align}

Let $\sigma\subset\partial\Psgm^{+}\cap\partial\Psgm^{-}$ be the internal
side (edge/face) shared by elements $\Psgm^{+}$ and $\Psgm^{-}$, and $\vs$
a function that belongs to $\HONE(\Th)$.
We denote 
the traces of $\vs$ on $\S$ from the interior of elements
$\Psgm^{\pm}$ by $\vsS^{\pm}$,
and the unit normal vectors to $\S$ pointing from $\Psgm^{\pm}$ to
$\Psgm^{\mp}$ by $\norS^{\pm}$.
Then, we introduce the jump operator
$\jump{\vs}=\vsS^{+}\norS^{+}+\vsS^{-}\norS^{-}$ at each internal side
$\S\in\Sset_{\hh}^{\INT}$, and $\jump{\vs}=\vsS\norS$ at each boundary
side $\S\in\Sset_{\hh}^{\BND}$.
The nonconforming space $\HONEnc(\Th;k)$ for any integer $k\geq1$ is
the subspace of the broken Sobolev space $\HONE(\Th)$ defined as
\begin{equation}
  \label{eq:H1-nc:def}
  \HONEnc(\Th;k) = \left\{\,
  \vs\in\HONE(\Th)\,:\,\int_{\S}\jump{\vs}\cdot\norS\,\qs\,{\rm d}\S=0\,
  \,\,\forall\,\qs\in\PS{k-1}(\S),
  \,\,\forall\S\in\Sset_{\hh}
  \,\right\}.
\end{equation}
Since $\jump{\vs}=0$ on any internal mesh side whenever $\vs$ belongs to
$\HONE(\Omega)$, it is trivial to show that
$\HONEzr(\Omega)\subset\HONEnc(\Th;k)$.

\medskip
Hereafter, we consider the extension of the bilinear form
$\as(\cdot,\cdot)$ to the broken Sobolev space $\HONE(\Th)$, which is
given by splitting it as sum of local terms:
\begin{align}
  &\as:\HS{1}(\Th)\times\HS{1}(\Th)\to\REAL\textrm{~~with~~}\nonumber\\[0.5em]
  &\as(\us,\vs) = \sum_{\P\in\Th}\asP(\us,\vs)
  \textrm{~~where~~}\asP(\us,\vs) = \int_{\P}\nabla\us\cdot\nabla\vs\, {\rm d}\P,
  \quad\forall\us,\vs\in\HS{1}(\Th).
  \label{eq:extension}
\end{align}
Clearly, the same definition applies when at least one entry of
$\as(\cdot,\cdot)$ belongs to the nonconforming space
$\HONEnc(\Th;k)$, which is a subspace of $\HONE(\Th)$, and the
nonconforming virtual element space, which will be defined in the next
section as a subspace of $\HONEnc(\Th;k)$.

\medskip
The nonconforming space with $k=1$ has the minimal regularity required
for the VEM formulation and the convergence analysis.
It is straightforward to show that $\snorm{\cdot}{1,\hh}$ is a norm on
$\HONEnc(\Th;k)$, although it is only a seminorm for the discontinuous
functions of $\HONE(\Th)$.
Moreover, using the Poincar\'e-Friedrichs inequality
$\norm{\vs}{0}^{2}\leq C_{PF}\snorm{\vs}{1,\hh}^{2}$, which
holds for every $\vs\in\HONEnc(\Th;k)$ and some positive constant
$C_{PF}$ independent of $\hh$, cf.~\cite{Brenner:2003}, we can show
that $\snorm{\cdot}{1,\hh}$ is equivalent to $\norm{\cdot}{1,\hh}$.
Therefore, we may refer to the seminorm $\snorm{\cdot}{1,h}$ as a norm
in $\HONEnc(\Th;k)$.

According to~\cite{Ayuso-Lipnikov-Manzini:2016}, for 
$\us\in\HS{s}(\Omega)$ with $s\geq3\slash{2}$ solution to 
\eqref{eq:sourcePbm}
and $\vs\in\HONEnc(\Th;k)$
we find that
\begin{align}
  \label{eq:def:Nh}
  \calNh(\us,\vs) 
  := \as(\us,\vs) - \bs(\fs,\vs)
  = \sum_{\S\in\Sset_{\hh}}\int_{\S}\nabla\us\cdot\jump{\vs}\, {\rm d}\S.
\end{align}
The quantity $\calNh(\us,\vs)$ is called the \emph{conformity error}.

\medskip
We now recall an estimate for the term measuring the nonconformity.
\begin{lemma}\label{le:Nh}
  Assume \ASSUM{A0} is satisfied. 
  Let $\us\in\HS{1+r}(\Omega)$ with $r\geq 1$ be the solution to
  \eqref{eq:sourcePbm}.
  Let $\vs\in\HONEnc(\Th;k)$, $k\geq1$, as defined in
  \eqref{eq:H1-nc:def}.
  Then, there exists a constant $C>0$ depending only on the polynomial
  degree and the mesh regularity such that
  \begin{align}
    \abs{\calNh(\us,\vs)} \leq C\hh^{t}\snorm{\us}{1+r}\snorm{\vs}{1,h}
  \end{align}
  where $t=\min\{k,r\}$ and 
  $\calNh(\us,\vs)$ is defined in \eqref{eq:def:Nh}.
\end{lemma}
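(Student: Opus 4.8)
The conformity error is defined in \eqref{eq:def:Nh} as a sum over all mesh sides of the boundary integrals $\int_{\S}\nabla\us\cdot\jump{\vs}\,{\rm d}\S$. The key structural feature to exploit is the orthogonality built into the nonconforming space $\HONEnc(\Th;k)$: by \eqref{eq:H1-nc:def}, the moments of $\jump{\vs}\cdot\norS$ against any polynomial of degree $\le k-1$ vanish on each side $\S$. The plan is therefore to insert a polynomial approximation of $\nabla\us$ on each side for free, by subtracting its $\LTWO(\S)$-projection, and then estimate the resulting local terms by Cauchy--Schwarz combined with standard polynomial approximation and trace inequalities.

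\medskip

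\noindent\emph{Step 1 (exploit the moment conditions).} On each side $\S$, let $\Piz{k-1}\big(\nabla\us\cdot\norS\big)$ denote the $\LTWO(\S)$-orthogonal projection of the normal trace of $\nabla\us$ onto $\PS{k-1}(\S)$. Writing $\nabla\us\cdot\jump{\vs}=\big(\nabla\us\cdot\norS^{+}\big)\vsS^{+}+\big(\nabla\us\cdot\norS^{-}\big)\vsS^{-}$ and using that $\nabla\us$ has a single-valued normal trace across interior sides (since $\us\in\HS{1+r}(\Omega)$ with $r\ge1$), the integral becomes $\int_{\S}\big(\nabla\us\cdot\norS\big)\,\jump{\vs}\cdot\norS\,{\rm d}\S$. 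By the defining property \eqref{eq:H1-nc:def}, subtracting any element of $\PS{k-1}(\S)$ leaves the integral unchanged, so I may replace $\nabla\us\cdot\norS$ by $\big(\nabla\us\cdot\norS-\Piz{k-1}(\nabla\us\cdot\norS)\big)$ at no cost. This is the crucial gain that produces the factor $\hh^{t}$.

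\medskip

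\noindent\emph{Step 2 (local estimates).} On a fixed side $\S$ apply Cauchy--Schwarz in $\LTWO(\S)$:
\begin{align*}
  \Abs{\int_{\S}\big(\nabla\us\cdot\norS-\Piz{k-1}(\nabla\us\cdot\norS)\big)\,\jump{\vs}\cdot\norS\,{\rm d}\S}
  \le \norm{\nabla\us\cdot\norS-\Piz{k-1}(\nabla\us\cdot\norS)}{0,\S}\;\norm{\jump{\vs}\cdot\norS}{0,\S}.
\end{align*}
For the first factor I use polynomial approximation together with a scaled trace inequality on $\S$ (both justified under \ASSUM{A0} by the mesh regularity) to bound it by $C\hh_{\P}^{t-1/2}\snorm{\us}{1+r,\P}$ with $t=\min\{k,r\}$, where $\P$ is an element adjacent to $\S$; the half-power of $\hh$ reflects the passage from the bulk to the $(\DIM-1)$-dimensional side. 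For the second factor, since $\jump{\vs}$ is a sum of traces of $\vs$ from the adjacent elements, a trace inequality gives $\norm{\jump{\vs}\cdot\norS}{0,\S}\le C\hh_{\P}^{-1/2}\snorm{\vs}{1,\P}$ summed over the adjacent elements. The two half-powers of $\hh$ combine to give $\hh_{\P}^{t}$ on each side.

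\medskip

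\noindent\emph{Step 3 (sum over sides).} Summing the local bounds over all $\S\in\Sset_{\hh}$, applying the discrete Cauchy--Schwarz inequality, and using that each element contributes to only a uniformly bounded number of sides (a consequence of \ASSUM{A0}, per the Remark), yields
\begin{align*}
  \abs{\calNh(\us,\vs)}\le C\hh^{t}\Big(\sum_{\P\in\Th}\snorm{\us}{1+r,\P}^{2}\Big)^{1/2}\Big(\sum_{\P\in\Th}\snorm{\vs}{1,\P}^{2}\Big)^{1/2}=C\hh^{t}\snorm{\us}{1+r}\snorm{\vs}{1,h},
\end{align*}
where I have bounded each $\hh_{\P}$ by the global meshsize $\hh$ and recognised the broken seminorm \eqref{eq:norm-broken}. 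The main obstacle is Step 2: getting the correct power $t=\min\{k,r\}$ requires a careful scaled trace/approximation argument on each side, valid uniformly across the mesh family — this is where the regularity constant $\varrho$ of \ASSUM{A0} enters and where a naive estimate would lose either the right $\hh$-power or uniformity. The single-valued normal trace of $\nabla\us$ (requiring $r\ge1$, i.e. $\us\in\HS{1+r}$ with enough regularity for traces of the gradient to make sense sidewise) is what legitimately reduces the vector integrand to the scalar normal-moment setting of \eqref{eq:H1-nc:def}.
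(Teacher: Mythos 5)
Your overall strategy (exploiting the moment conditions of \eqref{eq:H1-nc:def} to insert the side projection $\PiSz{k-1}$ of $\nabla\us\cdot\norS$, then Cauchy--Schwarz, local trace/approximation estimates, and a discrete Cauchy--Schwarz over sides) is the standard one, and it is the same device the paper uses when it proves the refined estimate of Lemma~\ref{lemma:Nh:def} (the paper only recalls Lemma~\ref{le:Nh} from the literature without proof). However, your Step~2 contains a genuine error that breaks the rate. You bound the second factor by
$\norm{\jump{\vs}\cdot\norS}{0,\S}\le C\hh_{\P}^{-1/2}\snorm{\vs}{1,\P}$,
which is false as stated: the trace inequality \eqref{eq:trace:inequality} necessarily carries the term $C\hS^{-1/2}\norm{\vs}{0,\Omega_{\S}}$, and the raw jump of a nonconforming function is not controlled by the gradient seminorm alone. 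Moreover, even granting that inequality, your exponent arithmetic is wrong: the product of $\hh^{t-1/2}$ and $\hh^{-1/2}$ is $\hh^{t-1}$, not $\hh^{t}$, so for $k=r=1$ your argument would yield no convergence at all. A negative power on the jump factor can never combine with $\hh^{t-1/2}$ to give $\hh^{t}$; you need the jump factor to carry a \emph{positive} half-power.

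The missing idea is a second use of orthogonality. Since $k\geq1$, constants belong to $\PS{k-1}(\S)$, so the defect $(I-\PiSz{k-1})(\nabla\us\cdot\norS)$ is $\LTWO(\S)$-orthogonal to $\PS{0}(\S)$; consequently you may also replace $\jump{\vs}\cdot\norS$ by $(I-\PiSz{0})(\jump{\vs}\cdot\norS)$ at no cost. Then estimate \eqref{eq:approx:pitrace} with $s=1$ applied to the traces of $\vs$ from the two adjacent elements gives
\begin{align*}
\norm{(I-\PiSz{0})(\jump{\vs}\cdot\norS)}{0,\S}\leq C\hS^{1/2}\snorm{\vs}{1,\Omega_{\S}},
\end{align*}
with the positive half-power you need, and the per-side product becomes $C\hS^{t}\snorm{\us}{1+r,\Omega_{\S}}\snorm{\vs}{1,\Omega_{\S}}$; your Step~3 then goes through unchanged. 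This double-projection insertion, i.e.
\begin{align*}
\int_{\S}\nabla\us\cdot\jump{\vs}\,{\rm d}\S
=\int_{\S}\big[(I-\PiSz{k-1})(\nabla\us\cdot\norS)\big]\,\big[(I-\PiSz{0})(\jump{\vs}\cdot\norS)\big]\,{\rm d}\S,
\end{align*}
is exactly what the paper does in its proof of Lemma~\ref{lemma:Nh:def}, and it is the step your proposal is missing.
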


\medskip
Throughout the paper, $\PS{\ell}(D)$ denotes the space of polynomials
of degree up to $\ell$ for any integer number $\ell\geq0$ on the
bounded connected subset $D$ of $\REAL^{\nu}$ with $\nu=1,2,3$.
The polynomial space $\PS{\ell}(D)$ is finite dimensional and we
denote its dimension by $\pi_{\ell,\nu}$.
It holds that $\pi_{\ell,1}=\ell+1$ for $\nu=1$;
$\pi_{\ell,2}=(\ell+1)(\ell+2)\slash{2}$ for $\nu=2$;
$\pi_{\ell,3}=(\ell+1)(\ell+2)(\ell+3)\slash{6}$ for $\nu=3$.
We also conventionally take $\PS{-1}(D)=\{0\}$ and $\pi_{-1,\nu}=0$.
Let $\xv_{D}$ denote the center of gravity of $D$ and $\hh_{D}$ its
characteristic length, as, for instance, the edge length for $\nu=1$,
the face diameter for $\nu=2$ and the cell diameter for $\nu=3$.
A basis for $\PS{\ell}(D)$ is provided by $\calM{\ell}(D) =
\left\{\,\left(({\xv-\xv_{D})}\slash{\hh_{D}}\right)^{\alpha}\textrm{~with~}\abs{\alpha}\leq\ell\,\right\}$,
the set of the \emph{scaled monomials of degree up to $\ell$}, where
$\alpha=(\alpha_1,\ldots\alpha_{\nu})$ is a $\nu$-dimensional
multi-index of nonnegative integers $\alpha_i$ with degree
$\abs{\alpha}=\alpha_1+\ldots+\alpha_{\nu}$ and, with obvious notation,
$\xv^{\alpha}=\xs_1^{\alpha_1}\cdots\xs_{\nu}^{\alpha_{\nu}}$ for any
$\xv\in\REAL^{\nu}$.
We will also use the set of \emph{scaled monomials of degree exactly
  equal to $\ell$}, denoted by $\calM{\ell}^{*}(D)$ and obtained by
setting $\abs{\alpha}=\ell$ in the definition of $\calM{\ell}(D)$.
Finally, we denote by $\Piz{\ell}\,:\,\LTWO(\P)\to\PS{\ell}(\P)$ for
$\ell\geq0$ the $\LTWO$-orthogonal projection onto the polynomial
space $\PS{\ell}(\P)$, and by
$\PiSz{\ell}\,:\,\LTWO(\S)\to\PS{\ell}(\S)$ for $\ell\geq0$ the
$\LTWO$-orthogonal projection onto the polynomial space
$\PS{\ell}(\S)$.

\subsection{Local and global nonconforming virtual element space}
We construct the local nonconforming virtual element space by
resorting to the so-called \emph{enhancement strategy} originally
devised in~\cite{Ahmad-Alsaedi-Brezzi-Marini-Russo:2013} for the
conforming \VEM{} and later extended to the nonconforming \VEM{}
in~\cite{Cangiani-Manzini-Sutton:2017}.
To this end, on every polytopal cell $\P\in\Th$ and for any integer
number $k\geq 1$ we first define the finite dimensional functional
space
\begin{equation}\label{eq:def:Vhkt0}
  \Vhkt(\P) = \left\{\,
  \vsh\in\HONE(\P)\,:\,
  \frac{\partial\vsh}{\partial\nv}\in\PS{k-1}(\S)\, \, \forall\S\subset\partial\P,\,
  \Delta\vsh\in\PS{k}(\P)\,
  \right\}.
\end{equation}
We notice that the space $\Vhkt(\P)$ clearly contains the polynomials of degree $k$.

Then, we introduce the set of continuous linear functionals from
$\Vhkt(\P)$ to $\REAL$ that for every virtual function $\vsh$ of
$\Vhkt(\P)$ provide:
\begin{description}
\item[]\textbf{(D1)} the moments of $\vsh$ of order up to $k-1$ on
  each $(\DIM-1)$-dimensional side $\S\in\partial\P$:
  \begin{equation}\label{eq:dofs:01}
    \frac{1}{\mS}\int_{\S}\vsh\,\ms\, {\rm d}\S,
    \,\,\forall\ms\in\calM{k-1}(\S),\,
    \forall\S\in\partial\P;
  \end{equation}
\item[]\textbf{(D2)} the moments of $\vsh$ of order up to $k-2$ on
  $\P$:
  \begin{equation}\label{eq:dofs:02}
    \frac{1}{\mP}\int_{\P}\vsh\,\ms\, {\rm d}\P,
    \quad\forall\ms\in\calM{k-2}(\P).
  \end{equation}
\end{description}
Finally, we introduce the elliptic projection operator
$\Pin{k}:\Vhkt(\P)\to\PS{k}(\P)$ that for any $\vsh\in\Vhkt(\P)$ is
defined by:
\begin{align}
  \int_{\P}\nabla\Pin{k}\vsh\cdot\nabla\qs\, {\rm d}\P =
  \int_{\P}\nabla\vsh\cdot\nabla\qs\, {\rm d}\P
  \qquad \text{$\forall q \in \PS{k}(\P)$}
\end{align}
together with the additional conditions:
\begin{align}
  \int_{\partial\P}(\Pin{k}\vsh-\vsh)\, {\rm d} \S&=0 \qquad\textrm{if~}k=1,    \label{eq:def:Pib:1}\\[0.5em]
  \int_{\P}       (\Pin{k}\vsh-\vsh)\dx&=0 \qquad\textrm{if~}k\geq 2.\label{eq:def:Pib:k}
\end{align}
As proved in~\cite{Cangiani-Manzini-Sutton:2017}, the polynomial
projection $\Pin{k}\vsh$ is exactly computable using only the values
from the linear functionals \textbf{(D1)}-\textbf{(D2)}.
Furthermore, $\Pin{k}$ is a polynomial-preserving operator, i.e.,
$\Pin{k}\qs=\qs$ for every $\qs\in\PS{k}(\P)$.

\medskip
We are now ready to introduce the \emph{local nonconforming virtual
  element space of order $k$} on the polytopal element $\P$, which is
the subspace of $\Vhkt(\P)$ defined as follow:
\begin{equation}\label{eq:def:Vhk0}
  \Vhk(\P) = \Big\{\,
  \vs\in\Vhkt(\P)\quad \textrm{~such~that~} \quad
  (\vsh-\Pin{k}\vsh,\ms)_{\P}=0
  \, \,\,\forall\ms\in\calMs{k-1}(\P)\cup\calMs{k}(\P)
  \,\Big\}.
\end{equation}
The space $\Vhk(\P)$ has the two important properties that we outline
below:
\begin{description}
\item[$(i)$] it still contains the space of polynomials of degree at
  most $k$;
\item[$(ii)$] the values provided by the set of continuous linear
  functionals \textbf{(D1)}-\textbf{(D2)} uniquely determine every
  function $\vsh$ of $\Vhk(\P)$ and can be taken as the \emph{degrees
    of freedom} of $\Vhk(\P)$.
\end{description}

\medskip
Property $(i)$ above is a direct consequence of space definitions 
\eqref{eq:def:Vhkt0} and \eqref{eq:def:Vhk0}, and guarantees the 
optimal order of approximation, 
while property $(ii)$ follows from the unisolvency of the degrees of
freedom \textbf{(D1)}-\textbf{(D2)} that was proved
in~\cite{Ayuso-Lipnikov-Manzini:2016,Cangiani-Manzini-Sutton:2017}).
Additionally, the $\LTWO$-orthogonal projection $\Piz{k}\vsh$ is
\emph{exactly} computable using only the degrees of freedom of $\vsh$,
cf.~\cite{Cangiani-Manzini-Sutton:2017}, and
$\Piz{k}\vsh=\Pin{k}\vsh$ for $k=1,2$ as for the conforming \VEM{},
cf.~\cite{Ahmad-Alsaedi-Brezzi-Marini-Russo:2013}.

\medskip
Finally, the \emph{global nonconforming virtual element space $\Vhk$
  of order $k\geq1$ subordinate to the mesh $\Th$} is obtained by
gluing together the elemental spaces $\Vhk(\P)$ to form a subspace of
the nonconforming space $\HONEnc(\Th;k)$.
The formal definition reads as:
\begin{align}
  \Vhk:=\Big\{\,\vsh\in\HONEnc(\Th;k)\,:\,\restrict{\vsh}{\P}\in\Vhk(\P)
  \,\,\,\forall\P\in\Th\,\Big\}.
  \label{eq:def:nvem0}
\end{align}
A set of degrees of freedom for $\Vhk$ is given by collecting the
values from the linear functionals \textbf{(D1)} for all the mesh
sides and \textbf{(D2)} for all the mesh elements.
The unisolvence of such degrees of freedom
\eqref{eq:dofs:01}-\eqref{eq:dofs:02} for $\Vhk$ is an immediate
consequence of their unisolvence on each local space $\Vhk(\P)$.
Thus, the dimension of $\Vhk$ is equal to
$\NS\times\pi_{k-1,\DIM-1}+\NP\times\pi_{k-2,\DIM}$, where $\NS$ is
the total number of sides, and $\NP$ the total number of elemnts, 
and we recall that $\pi_{\ell,\nu}$ is the dimension of
the space of polynomials of degree up to $\ell$ in $\REAL^{\nu}$.

\medskip
\begin{remark}
  The set of degrees of freedom can be properly redefined by excluding
  the moments on the $\NMB^{\Sset,\BND}$ boundary sides, i.e., for
  $\S\in\Sset_{\hh}^{\BND}$, which are set to zero to impose the
  homogeneous Dirichlet boundary condition~\eqref{eq:mod00:B}.
  This reduces the dimension of $\Vhk$ to
  $\NMB^{\Sset,\INT}\times\pi_{k-1,\DIM-1}+\NP\times\pi_{k-2,\DIM}$.
\end{remark}

\subsection{Approximation properties}
Both for completeness of exposition and future reference in the paper,
we briefly summarize the local approximation properties by polynomial
functions and functions in the virtual nonconforming space.
We omit here any details about the derivation of these estimate and
refer the interested readers to
References~\cite{%
Ciarlet:1991,%
Brenner-Scott:2008,%
BeiraodaVeiga-Brezzi-Cangiani-Manzini-Marini-Russo:2013,%
Ahmad-Alsaedi-Brezzi-Marini-Russo:2013,%
Ayuso-Lipnikov-Manzini:2016,%
2016stability,%
Brenner:2017}.

\medskip
\noindent
\emph{Local polynomial approximations.}
On a given element $\P\in\Th$, let
$\vs\in\HS{s}(\P)$ with $1 \leq s\leq k+1$.
Under mesh assumptions~\ASSUM{A0}{}, there exists a piecewise polynomial approximation $\vs_{\pi}$
that is of degree $k$ on each element,  such that
\begin{equation}\label{eq:approx:pi}
  \norm{\vs-\vs_{\pi}}{0,\P} + \hP\snorm{\vs-\vs_{\pi}}{1,\P}
  \leq C\hP^{s}\snorm{\vs}{s,\P},
\end{equation} 
for some constant $C>0$ that depends only on the polynomial degree
$k$ and the mesh regularity constant $\varrho$.

\medskip
\noindent
An instance of such a local polynomial approximation is provided by
the $\LTWO$-projection $\Piz{k}\vs$ onto the local polynomial space
$\PS{k}(\P)$, which  satisfies the (optimal) error bound \eqref{eq:approx:pi}.

\medskip
\noindent
Furthermore, consider the internal side $\S\in\Sset_{\hh}^{\INT}$
and let $\P^{\pm}_{\S}$ be the two elements sharing $\S$, so that
$\S \subset \partial\P^{+}_{\S}\cap\partial\P^{-}_{\S}$.
Let $\Omega_{\S}=\P^{+}_{\S}\cup\P^{-}_{\S}$.
Then, the following trace inequality~\cite{Brenner-Scott:2008} holds 
for every $\vs\in\HS{1}(\Omega_\S)$
\begin{equation}\label{eq:trace:inequality}
  \norm{\vs}{0,\S} 
  \leq C\hS^{-\frac{1}{2}}\norm{\vs}{0,\Omega_{\S}} 
  +  C\hS^{\frac{1}{2}}\snorm{\vs}{1,\Omega_{\S}}.
\end{equation}
Moreover, for every $\vs\in\HS{s}(\Th)$ with $1\leq s\leq k+1$, 
combining the $\LTWO$-projection approximation property with \eqref{eq:trace:inequality}
the following useful error
estimate holds \cite{DiPietro-Ern:2012}
\begin{equation}\label{eq:approx:pitrace}
  \norm{\vs-\PiSz{k}\vs}{0,\S} 
  \leq C\hS^{s- \frac{1}{2}}\snorm{\vs}{s,\Omega_{\S}} \,.
\end{equation}
The same estimate holds also for the boundary sides
$\S\in\Sset_{\hh}^{\BND}$ by taking $\Omega_{\S}=\P$, the element to
which $\S$ belongs.

\medskip
\noindent
\emph{Interpolation error.}
Similarly, under mesh regularity assumptions~\ASSUM{A0}{}, we can
define an interpolation operator in $\Vhk$ having optimal
approximation properties.
Therefore, for every $\vs\in\HS{s}(\P)$ with $1\leq s\leq k+1$ we can
find the local interpolate $\vsI\in\Vhk(\P)$ such that
\begin{align}\label{eq:interp:00}
  \norm{\vs-\vsI}{0,\P} + \hP\snorm{\vs-\vsI}{1,\P}\leq C\hP^{s}\snorm{\vs}{s,\P},
\end{align}
where $C>0$ is a positive constant independent of $\hh$.

\section{Virtual element discretization}
\label{sc:approx}

This section briefly reviews the nonconforming virtual element
discretization of the source problem and its extension to the
eigenvalue problem.

\subsection{The virtual element discretization of the source problem}
The goal of the present section is to introduce the virtual element
discretization of the source problem \eqref{eq:sourcePbm}. 
According to~\cite{Ayuso-Lipnikov-Manzini:2016,Gardini-Vacca:2017}, we
define a suitable discrete bilinear form $\ash(\cdot,\cdot)$
approximating the continuous gradient-gradient form $\as(\cdot,
\cdot)$, whereas for what concerns the bilinear form $\bs(\cdot,
\cdot)$ we propose two possible discretizations, hereafter denoted by
$\bsh(\cdot,\cdot)$ and $\bsht(\cdot,\cdot)$.

The discrete bilinear form $\ash(\cdot,\cdot)$ is the sum of elemental
contributions
\begin{align}
  \label{eq:a:global}
  \ash(\ush,\vsh)=\sum_{\P\in\Th}\ashP(\ush,\vsh),
\end{align}
where
\begin{align}
  \label{eq:discreteforms}
  \ashP(\ush,\vsh) = \asP (\Pin{k}\ush,\Pin{k}\vsh) 
  + \SP\Big( (I-\Pin{k})\ush, (I-\Pin{k})\vsh \Big),
\end{align}
and $\SP(\cdot,\cdot)$ denotes any symmetric positive definite
bilinear form on the element $\P$ for which there exist two
positive uniform constants $\cbot$ and $\ctop$ such that
\begin{align}
  \cbot\asP(\vsh,\vsh)
  \leq\SP(\vsh,\vsh)
  \leq\ctop\asP(\vsh,\vsh)
  \quad\forall\vsh\in\Vhk(\P)\cap
  \ker(\Pin{k}) \,.
\end{align}
This requirement implies that $\SP(\cdot,\cdot)$ scales like
$\asP(\cdot,\cdot)$, namely $\SP(\cdot,\cdot)\simeq\hP^{d-2}$.

\medskip
The choice of the discrete form $\ash(\cdot,\cdot)$ is driven by the
need to satisfy the following properties:
\begin{description}
\item[-] {\emph{$k$-consistency}}: for all $\vsh\in\Vhk(\P)$ and for all
  $\qs\in\PS{k}(\P)$ it holds
  \begin{align}
    \label{eq:k-consistency}
    \ashP(\vsh,\qs) = \asP(\vsh,\qs);
  \end{align}
\item[-] {\emph{stability}}: there exists two positive constants
  $\alpha_*,\,\alpha^*$, independent of $\hh$ and of $\P$, such that
  \begin{align}
    \label{eq:stability}
    \alpha_*\asP(\vsh,\vsh)
    \leq\ashP(\vsh,\vsh)
    \leq\alpha^*\asP(\vsh,\vsh)\quad\forall\vsh\in\Vhk(\P) \,.
  \end{align}
\end{description} 
In particular, the first term in~\eqref{eq:discreteforms} ensures the
$k$-consistency of the method and the second one its stability,
cf.~\cite{Ayuso-Lipnikov-Manzini:2016}.

For what concerns the right hand side $\bs(\cdot,\cdot)$,
following the setting in \cite{Gardini-Vacca:2017}, we introduce two
possible approximated bilinear forms.

\medskip
\noindent
\emph{Non--stabilized bilinear form.}  In the first choice, we
consider the bilinear form $\bsh(\cdot,\cdot)$, which satisfies the
$k$-consistency property but not the stability property 
(extending to $\bsh(\cdot,\cdot)$ the definitions in 
\eqref{eq:k-consistency} and \eqref{eq:stability}).
Let us split the right-hand side $\bsh(\cdot,\cdot)$ with the sum of
local contributions:
\begin{align}
  \label{eq:bscal:globdef}
  \bsh(\fs,\vsh) = \sum_{P\in\Th}\bshP(\fs,\vsh).
\end{align}
Then, we define 
\begin{align}
  \label{eq:bscal:locdef}
  \bshP(\fs,\vsh) = \bsP(\Piz{k} \fs, \vsh) \qquad \forall \vsh \in \Vhk(\P).
\end{align}
We observe that each local term is fully computable for any functions
$\fs\in\LTWO(\Omega)$ and $\vsh$ in $\Vhk$ since
\begin{equation*}
  \bsh(\fs,\vsh) 
  = \sum_{\P\in\Th}\int_{\P}(\Piz{k}\fs)\vsh\, {\rm d}\P
  = \sum_{\P\in\Th}\int_{\P}\fs (\Piz{k}\vsh)\, {\rm d}\P,
\end{equation*}
and 
$\Piz{k}\vsh$ is computable (exactly) from the degrees of
freedom of $\vsh$ (cf. Property $(i)$ ).
Moreover, by definition of $L^2$-projection $\Piz{k}$, it is straightforward to check that
\begin{align}
  \label{eq:bscal:locdef:equivalence}
  \bshP(\fs,\vsh)  
   = \bsP( \Piz{k} \fs, \Piz{k}\vsh)\qquad \forall \vsh \in \Vhk(\P).
\end{align}
We estimate the approximation error of the right-hand side by using
the Cauchy-Schwarz inequality twice and estimate~\eqref{eq:approx:pi},
after noting that $\restrict{\vsh}{\P}\in\HONE(\P)$ and $\Piz{0}\vsh$
is orthogonal to $\fs-\Piz{k}\fs$:
\begin{equation}
\label{eq:bs-bsh}
\begin{split}
  \abs{\bs(\fs,\vsh) - \bsh(\fs, \vsh)} 
  &\leq\sum_{\P\in\Th}\big|\bsP(\fs-\Piz{k}\fs,\vsh)\big| 
  =\sum_{\P\in\Th}\big|\bsP((I-\Piz{k})\fs,(I-\Piz{0})\vsh)\big| \\
  & \leq
  \sum_{\P\in\Th}\norm{(I-\Piz{k})\fs}{0,P}\norm{(I-\Piz{0})\vsh}{0,P}
  \\[0.5em]
  &\leq\sum_{\P\in\Th}C\hP\norm{(I-\Piz{k})\fs}{0,P}\snorm{\vsh}{1,P}
  \leq C\hh\norm{(I-\Piz{k})\fs}{0}\snorm{\vsh}{1,\hh}.
\end{split}
\end{equation}
In the light of \eqref{eq:a:global} and \eqref{eq:bscal:globdef}, the virtual element discretization of source
problem~\eqref{eq:sourcePbm} reads
as
: \emph{Find $\ussh\in\Vhk$ such
that}
\begin{equation}
  \label{eq:discreteSource}
  \ash(\ussh,\vsh) = \bsh(\fs,\vsh) \quad\forall\vsh\in\Vhk \,.
\end{equation}

\medskip
\noindent
\emph{Stabilized bilinear form.}  The second approximation of the
bilinear form in \eqref{eq:exact:b} is inspired by the definition of
the virtual bilinear form $\ash$.  
In order to distinguish the two formulations, we denote the stabilized
bilinear form by $\bsht(\cdot,\cdot)$.
As usual, we first decompose $\bsht(\cdot,\cdot)$ into the sum of
local contributions:
\begin{align}
\label{eq:btilde:globdef}
  \bsht(\fs,\vsh) = \sum_{P\in\Th}\bshtP(\fs,\vsh) \, ,
\end{align}
and, then, we define
\begin{equation}
  \label{eq:discretebstab}
  \bshtP(\fs,\vsh) 
  = \bsP(\Piz{k} \fs, \Piz{k}\vsh) 
  + \SPt\Big( (I-\Piz{k})\fs, (I-\Piz{k})\vsh \Big),
\end{equation}
where $\SPt$ is any positive definite bilinear form on the element
$\P$ such that there exist two uniform positive constants $\ctbot$ and
$\cttop$ such that
\begin{align}
  \label{eq:ctbot}
  \ctbot\bsP(\vs,\vs)\leq\SPt(\vs,\vs)\leq\cttop\bs(\vs,\vs)
  \quad\forall\vs\in\Vhk(\P)\cap\KER(\Piz{k}).
\end{align}
We notice that the bilinear form $\bshtP$ defined above satisfies both
the $k$-consistency and the stability property.
\medskip
\begin{remark}
  \label{eq:scale2}
  In analogy with the condition on the form $\SP(\cdot,\cdot)$, we
  require that the form $\SPt(\cdot,\cdot)$ scales like
  $\bsP(\cdot,\cdot)$, that is $\SPt(\cdot,\cdot)\simeq\hh^{\DIM}$.
\end{remark}

By definition \eqref{eq:discretebstab} and from inequalities in
\eqref{eq:ctbot}, using similar computations as in \eqref{eq:bs-bsh},
for all $\fs \in \LTWO(\P)$ and for all $\vs \in \Vhk(\P)$ it holds
that
\begin{equation}
\label{eq:bs-bsht}
\begin{split}
  \abs{\bs(\fs,\vsh) - \bsht(\fs, \vsh)} 
  &\leq\sum_{\P\in\Th} \left( \big|\bsP(\fs-\Piz{k}\fs,\vsh)\big| + 
  \SPt\Big( (I-\Piz{k})\fs, (I-\Piz{k})\vsh \Big)\right) \\
  &\leq\sum_{\P\in\Th} \left( \big|\bsP((I-\Piz{k})\fs,(I-\Piz{0})\vsh)\big| +
  \cttop \norm{(I-\Piz{k})\fs}{0} \norm{(I-\Piz{k})\vsh}{0} 
   \right)\\
  & \leq
  \sum_{\P\in\Th}(1 + \cttop)\norm{(I-\Piz{k})\fs}{0,P}\norm{(I-\Piz{0})\vsh}{0,P}
  \\[0.5em]
  &\leq\sum_{\P\in\Th}C\hP\norm{(I-\Piz{k})\fs}{0,P}\snorm{\vsh}{1,P}
  \leq C\hh \, \sum_{\P\in\Th}\norm{(I-\Piz{k})\fs}{0} \, \snorm{\vsh}{1,\hh} \,.
\end{split}
\end{equation}
From \eqref{eq:a:global} and \eqref{eq:btilde:globdef}, the second formulation reads as:
\emph{Find $\ussht\in\Vhk$ such that}
\begin{align}
  \label{eq:discreteSource2}
  \ash(\ussht,\vsh) = \bsht(\fs,\vsh)\quad\forall\vsh\in\Vhk.
\end{align}
The well-posedness of the discrete source problems \eqref{eq:discreteSource} and ~\eqref{eq:discreteSource2} stem from the the coercivity and the continuity of 
the bilinear form $a_h(\cdot,\cdot)$ (cf. \eqref{eq:stability}), and from the continuity of 
the discrete forms $\bsh(\cdot,\cdot)$ and $\bsht(\cdot,\cdot)$.

\medskip
\medskip
\noindent 

Following the same arguments as in~\cite{Ayuso-Lipnikov-Manzini:2016} but with $\Piz{k}\fs$ for $k\geq1$ instead of $\Piz{k-2}\fs$, we
derive the error estimates in the $\HONE$-norm and $\LTWO$-norm that
we summarize in the following theorem for next reference in the paper.

We can state the following optimal error estimates between the solution of the continuous and discrete source problems \eqref{eq:discreteSource} and \eqref{eq:discreteSource2}.

\begin{theorem}\label{thm:aprioriestimate}
  Under the  assumptions \ASSUM{A0},
  let $\uss\in\HS{1+r}(\Omega)$ with regularity index $r\geq 1$ be the
  solution to \eqref{eq:sourcePbm} with $\fs\in\LTWO(\Omega)$. \\
  Let $\ussh$ and $\ussht\in\Vhk$ be respectively the solutions to the
  nonconforming virtual element method~\eqref{eq:discreteSource} and
  ~\eqref{eq:discreteSource2}.
  %
  Let $\vsh \in \{\ussh, \, \ussht\}$, then we have the following error estimates\medskip	    
  
  \begin{itemize}
  \item $\HONE$-error estimate:
    \begin{align*}
      \snorm{ \uss-\vsh }{1,\hh} &\leq C \left( 
        \hh^{t}\snorm{\uss}{1+r} 
        + \hh\norm{(I-\Piz{k})\fs}{0}
      \right) 
    \end{align*}

    \medskip
  \item $\LTWO$-error estimate (for a convex $\Omega$):
    \begin{align*}
      \norm{ \uss-\vsh }{0} &\leq 
      C \left( \hh^{t+1}
        \snorm{\uss}{1+r}
        + \hh^2\norm{(I-\Piz{k})\fs}{0}
      \right) 
    \end{align*}
  \end{itemize}
  with $t=min(k,r)$.
\end{theorem}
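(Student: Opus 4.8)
The plan is to treat both discrete solutions $\ussh$ and $\ussht$ at once, writing $\vsh$ for either of them; the only formulation-dependent ingredient is the right-hand side consistency estimate, which is \eqref{eq:bs-bsh} for the non-stabilized form and \eqref{eq:bs-bsht} for the stabilized one, and these have the same structure. The $\HONE$-estimate follows a Strang-type argument and the $\LTWO$-estimate an Aubin--Nitsche duality.

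For the $\HONE$-estimate I first split $\snorm{\uss - \vsh}{1,\hh} \le \snorm{\uss - \usI}{1,\hh} + \snorm{\usI - \vsh}{1,\hh}$, where $\usI \in \Vhk$ is the interpolant; the first term is bounded by $C\hh^{t}\snorm{\uss}{1+r}$ via \eqref{eq:interp:00}. For the second term I set $\delta_h = \usI - \vsh \in \Vhk$ and use the stability \eqref{eq:stability} to write $\alpha_* \snorm{\delta_h}{1,\hh}^2 \le \ash(\delta_h, \delta_h) = \ash(\usI, \delta_h) - \bsh(\fs, \delta_h)$, invoking the discrete equation. Inserting the piecewise polynomial $\usp$, using the $k$-consistency \eqref{eq:k-consistency} to replace $\ashP(\usp, \delta_h)$ by $\asP(\usp, \delta_h)$, and then the conformity identity \eqref{eq:def:Nh} to bring in $\calNh(\uss, \delta_h)$, I obtain four contributions: the continuity of $\ash$ on $\usI - \usp$, the continuous form on $\usp - \uss$, the conformity error, and the data mismatch $\bs(\fs, \delta_h) - \bsh(\fs, \delta_h)$. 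These are controlled by \eqref{eq:interp:00} and \eqref{eq:approx:pi}, by Lemma~\ref{le:Nh} (giving $C\hh^{t}\snorm{\uss}{1+r}\snorm{\delta_h}{1,\hh}$), and by \eqref{eq:bs-bsh}/\eqref{eq:bs-bsht} (giving $C\hh\norm{(I-\Piz{k})\fs}{0}\snorm{\delta_h}{1,\hh}$). Dividing by $\snorm{\delta_h}{1,\hh}$ and recombining with the interpolation term yields the claimed bound with $t = \min\{k,r\}$.

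For the $\LTWO$-estimate I set $e = \uss - \vsh$ and introduce the dual problem: find $\phi \in \Vspace$ with $\as(\phi, w) = \bs(e, w)$ for all $w \in \Vspace$; convexity of $\Omega$ gives $\phi \in \HS{2}(\Omega)$ with $\snorm{\phi}{2} \le C\norm{e}{0}$. Since $e \in \HONEnc(\Th;k)$, the conformity identity extends this relation to the broken space, and testing with $e$ gives $\norm{e}{0}^2 = \bs(e,e) = \as(\phi, e) - \calNh(\phi, e)$. The term $\calNh(\phi, e)$ is controlled by Lemma~\ref{le:Nh} with $t=1$ (valid since $k \ge 1$ and the dual regularity index equals $1$) by $C\hh\snorm{\phi}{2}\snorm{e}{1,\hh} \le C\hh\norm{e}{0}\snorm{e}{1,\hh}$, into which I insert the already-proven $\HONE$-bound to produce exactly the factors $\hh^{t+1}\snorm{\uss}{1+r}$ and $\hh^2\norm{(I-\Piz{k})\fs}{0}$. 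The core is $\as(\phi, e)$: I use the primal relation $\as(\phi, \uss) = \bs(\fs, \phi)$, then rewrite $\as(\phi, \vsh)$ by inserting the dual polynomial approximation $\phi_{\pi}$ and interpolant $\phi_I \in \Vhk$, passing from $\as(\phi_\pi, \vsh)$ to $\ash(\phi_\pi, \vsh)$ by $k$-consistency and using the discrete equation $\ash(\vsh, \phi_I) = \bsh(\fs, \phi_I)$ (respectively $\bsht(\fs,\phi_I)$).

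The delicate point, and the step I expect to be the main obstacle, is that a naive expansion of $\as(\phi, e)$ produces terms such as $\as(\phi - \phi_{\pi}, \vsh)$ and $\ash(\phi_{\pi} - \phi_I, \vsh)$ in which $\vsh$ carries a full, order-one energy norm, so a term-by-term estimate would give only $O(\hh)$ and destroy the duality gain. The resolution, requiring careful bookkeeping, is to recombine these with $\bs(\fs, \phi)$ and $\bsh(\fs, \phi_I)$ through the primal equation (equivalently, through elementwise integration by parts of $\as(\phi_{\pi}, \uss)$ against $-\Delta\uss = \fs$) so that all order-one parts cancel and every surviving term is a genuine product of a dual approximation error, of size $C\hh\snorm{\phi}{2} \le C\hh\norm{e}{0}$, with either the primal $\HONE$-error $\snorm{e}{1,\hh}$ or a consistency/conformity residual. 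In particular the data contribution must be written as $((I-\Piz{k})\fs, \phi) = ((I-\Piz{k})\fs,(I-\Piz{k})\phi)$, exploiting the elementwise orthogonality of $(I-\Piz{k})\fs$ to $\PS{k}$, so that it is bounded by $C\hh^2\norm{(I-\Piz{k})\fs}{0}\norm{e}{0}$ instead of the suboptimal $C\hh^2\norm{\fs}{0}\norm{e}{0}$; this is precisely what delivers the sharp data term $\hh^2\norm{(I-\Piz{k})\fs}{0}$. Collecting all contributions and dividing by $\norm{e}{0}$ gives the $\LTWO$-estimate.
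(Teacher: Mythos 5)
Your proposal is correct and takes essentially the same route as the paper: the paper omits the proof, deferring to \cite[Theorems~4.3 and~4.5]{Ayuso-Lipnikov-Manzini:2016}, whose arguments are precisely the Strang-type energy estimate and the Aubin--Nitsche duality argument that you reconstruct (including the recombination that pairs each dual approximation error with a primal error or consistency/conformity residual). Moreover, your handling of the data term through the elementwise orthogonality $((I-\Piz{k})\fs,\phi)_{\P}=((I-\Piz{k})\fs,(I-\Piz{k})\phi)_{\P}$ is exactly the modification (projecting $\fs$ onto $\PS{k}(\P)$ instead of $\PS{\max(k-2,0)}(\P)$) that the paper singles out, in the remark following the theorem, as the only difference from the cited proofs.
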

\medskip The proofs of these estimates are omitted as they are almost
identical to those
of~\cite[Theorem~4.3,Theorem~4.5]{Ayuso-Lipnikov-Manzini:2016}.  
The only difference is that here the forcing term $\fs$ is
approximated in each element by the orthogonal projection onto
polynomials of degree $k$ instead of $\overline{k}=max(k-2,0)$ (see
estimates \eqref{eq:bs-bsh} and \eqref{eq:bs-bsht}).
Note, indeed, that in the $\LTWO$-estimate
of~\cite[Theorem~4.5]{Ayuso-Lipnikov-Manzini:2016}, the term that
depends on the approximation of $\fs$ is given by
\begin{align*}
  (\hh^2+\hh^{\overline{k}+1})\norm{(I-\Piz{k})\fs}{0}
  =
  \begin{cases}
    (\hh^2 + \hh)\norm{(I-\Piz{k})\fs}{0} & \textrm{for~}k=1,2,\\[0.5em]
    \hh^2\norm{(I-\Piz{k})\fs}{0} & \textrm{for~}k\geq3.
  \end{cases}
\end{align*}
The difference in the coefficients is a consequence of the
orthogonality of $(I-\Piz{k})\fs$ to the polynomials of degree $k$ instead
of $\overline{k}$.

\medskip


\begin{remark}
\label{rm:estimate-regular-load}
   We observe that if the load term $\fs$ is an eigenfunction of problem~\eqref{eq:eigPbm}, 
    then $\fs$ solves the continuous source problem~\eqref{eq:sourcePbm} with
    datum $\lambda \fs$ and thus, thanks to the regularity
    result~\eqref{eq:regularity}, it belongs to $\HS{1+r}(\Omega)$ and
    $\snorm{f}{1+r} \le C\norm{f}{0}$. Then, the a priori
    error estimates in Theorem ~\ref{thm:aprioriestimate} reduce to\medskip
    \begin{itemize}
    \item $\HONE$-error estimate:
    \begin{align*}
    \snorm{ \uss-\vsh }{1,\hh} &\leq C \left( 
        \hh^{t}\snorm{\uss}{1+r} 
        + \hh\norm{(I-\Piz{k})\fs}{0}
      \right) 
      \le  \hh^{t}\snorm{\uss}{1+r}  \leq C \hh^{t}\norm{f}{0} \leq C \hh^{t},
    \end{align*}
    \item $\LTWO$-error estimate:
    \begin{align*}
      \norm{ \uss - \vsh }{0} &\leq 
      C \left( \hh^{t+1}
        \snorm{\uss}{1+r}
        + \hh^2\norm{(I-\Piz{k})\fs}{0} \right) 
        \leq C \hh^{t+1} \snorm{u^s}{1+r}
        \leq C \hh^{t+1}\norm{f}{0} \leq C \hh^{t+1},
    \end{align*}
    \end{itemize}
    since
    \begin{equation*}
      \norm{(I-\Piz{k})\fs}{0}\leq C \hh^{\min\{k+1,1+r\}}\snorm{f}{1+r} \leq C \hh^{t+1}\norm{f}{0}.
    \end{equation*}
\end{remark}

\begin{remark}
  In~\cite{Ayuso-Lipnikov-Manzini:2016} it is proved that the discrete
  problem \eqref{eq:discreteSource} is well-posed by taking, 
  in the definition of the discrete bilinear form $\bsh(\cdot,\cdot)$, 
  instead of $\Piz{k}\fs$,
  $\Piz{k-2}\fs$  for $k\geq 2$ and a first-order
  approximation of $\Piz{0}\fs$ for $k=1$.
  However, in the definition of the discrete bilinear forms $\bsh(\cdot,\cdot)$
  and $\bsht(\cdot,\cdot)$, we project onto the space $\PS{k}(\P)$. 
  This choice does not provide a better convergence rate, due
  to the $k$-consistency property, but it has been numerically observed that it gives 
  more accurate  results. 
\end{remark}

\subsection{The virtual element discretization of the eigenvalue
  problem.}
In the light of the nonconforming virtual element
method~\eqref{eq:discreteSource} and ~\eqref{eq:discreteSource2}
introduced in the previous section,
following~\cite{Gardini-Vacca:2017}, we consider two different
discretizations of the eigenvalue problem~\eqref{eq:eigPbm}.
The first formulation in inspired to the source problem \eqref{eq:discreteSource},
and uses definition~\eqref{eq:bscal:globdef}. We formulate the virtual
element approximation of~\eqref{eq:eigPbm} as:
\emph{Find $(\lambda_h,\ush)\in\REAL\times\Vhk$, $\NORM{\ush}{0}=1$, such that}
\begin{align}
  \label{eq:discreteEigPbm}
  \ash(\ush,\vsh) = \lambda_{\hh}\bsh(\ush,\vsh)\quad\forall\vsh\in\Vhk.
\end{align}

\medskip
\noindent
The second formulation is inspired to the virtual source problem \eqref{eq:discreteSource2} and
uses definition~\eqref{eq:btilde:globdef}. We formulate the second
approximation of problem~\eqref{eq:eigPbm} as:
\emph{Find $(\widetilde{\lambda}_{\hh},\usht)\in\REAL\times\Vhk$,
  $\NORM{\usht}{0} = 1$, such that}
\begin{align}\label{eq:discreteEigPbm2}
  \ash(\usht,\vsh) = \widetilde{\lambda}_{\hh}\bsht(\usht,\vsh)\quad\forall\vsh\in\Vhk.
\end{align}


\section{Spectral approximation for compact operators}
\label{sc:compact}

In this section, we briefly recall some spectral approximation results
that can be deduced from~\cite{Babuska-Osborn:1991,Boffi:2010,Kato:1976}.  
For more general results, we refer to the original papers.
Before stating the spectral approximation results, we introduce a
natural compact operator associated with problem~\eqref{eq:eigPbm} and
its discrete counterpart, and we recall their connection with the
eigenmode convergence.

\medskip
We associate problem~\eqref{eq:eigPbm} with its
solution operator $T\in\mathcal{L}(\LTWO(\Omega))$, which is the 
bounded linear operator $T:\LTWO(\Omega)\rightarrow\LTWO(\Omega)$
mapping the forcing term $\fs$ to $\uss=:T\fs$:
\begin{align*}
  \label{eq:T}
  \left\{
    \begin{array}{l}
      T\fs\in\Vspace\textrm{~such~that}\\
      \as(T\fs,\vs) = \bs(\fs,\vs)\quad\forall\vs\in\Vspace.
    \end{array}
  \right.
\end{align*}
Operator $T$ is self-adjoint and positive definite with respect to the
inner products $\as(\cdot, \cdot)$ and $\bs(\cdot, \cdot)$ on $\Vspace$, and compact due to the
compact embedding of $\HONE(\Omega)$ in $\LTWO(\Omega)$.

\medskip
Similarly, we associate problem~\eqref{eq:discreteSource} with its
solution operator $T_{\hh}\in\mathcal{L}(\LTWO(\Omega))$ and
problem~\eqref{eq:discreteSource2} with its solution operator
$\widetilde{T}_{\hh}\in\mathcal{L}(\LTWO(\Omega))$.
The former is the bounded linear operator mapping the forcing term
$\fs$ to $\ussh=:T_{\hh}\fs$ and satisfies:
\begin{equation*}
  \left\{
    \begin{array}{l}
      \label{eq:Th}
      T_{\hh}\fs\in\Vhk\textrm{~such~that}\\
      \ash(T_{\hh}\fs,\vsh) = \bsh(\fs,\vsh)\quad\forall\vsh\in\Vhk.
    \end{array}
  \right.
\end{equation*}
The latter is the bounded linear operator mapping the forcing term
$\fs$ to $\ussht=:\widetilde{T}_{\hh}\fs$ and satisfies:
\begin{align*}
  \left\{
    \begin{array}{l}
      \label{eq:Thtilde}
      \widetilde{T}_{\hh}\fs\in\Vhk\textrm{~such~that}\\
      \ash(\widetilde{T}_{\hh}\fs,\vsh) = \bsht(\fs,\vsh)\quad\forall\vsh\in\Vhk.
    \end{array}
  \right.
\end{align*}

Both operators $T_{\hh}$ and $\widetilde{T}_{\hh}$ are self-adjoint
and positive definite with respect to the inner products
$\ash(\cdot, \cdot)$, $\bsh(\cdot, \cdot)$ and $\ash(\cdot, \cdot)$, $\bsht(\cdot, \cdot)$.
They are also compact since their ranges are finite dimensional.

\medskip
\noindent
The eigensolutions of the continuous problem~\eqref{eq:eigPbm} and the
discrete problems ~\eqref{eq:discreteEigPbm}
and~\eqref{eq:discreteEigPbm2} are respectively related to the
eigenmodes of the operators $T$, $T_{\hh}$, and $\widetilde{T}_{\hh}$.
In particular, $(\lambda,\us)$ is an eigenpair of
problem~\eqref{eq:eigPbm} if and only if $T\us=(1\slash{\lambda})\us$,
i.e. $(\frac{1}{\lambda}, \us)$ is an eigenpair for the operator $T$,
and analogously for problems ~\eqref{eq:discreteEigPbm}
and~\eqref{eq:discreteEigPbm2} and operators $T_{\hh}$ and
$\widetilde{T}_{\hh}$.
By virtue of this correspondence, the convergence analysis can be
derived from the spectral approximation theory for compact operators.
In the rest of this section we refer only to operators $T$ and
$\widetilde{T}_{\hh}$.
Identical considerations hold for operators $T$ and $T_{\hh}$
and we omit them for brevity.

\medskip
A sufficient condition for the correct spectral approximation of a
compact operator $T$ is the uniform convergence to $T$ of the family
of discrete operators $\{\widetilde{T}_{\hh}\}_{\hh}$ (see~\cite[Proposition
7.4]{Boffi:2010}, cf. also~\cite{Babuska-Osborn:1991}):
\begin{equation}
  \label{eq:unifconv}
  \norm{T-\widetilde{T}_{\hh}}{\mathcal{L}(\LTWO(\Omega))}\to 0, \quad\textrm{as~}\hh\to 0,
\end{equation}
or, equivalently, 
\begin{equation}
  \label{eq:unifconv2}
  \norm{(T-\widetilde{T}_{\hh})\fs}{0}\leq C\rho(\hh)\norm{\fs}{0}
  \quad\forall\fs\in\LTWO(\Omega),
\end{equation}
with $\rho(\hh)$ tending to zero as $\hh$ goes to zero. 
Condition\eqref{eq:unifconv2} usually follows by a-priori estimates
with no additional regularity assumption on $\fs$. 
Besides the convergence of the eigenmodes,
condition~\eqref{eq:unifconv}, or the equivalent
condition~\eqref{eq:unifconv2}, implies that no spurious eigenvalues
may pollute the spectrum.
In fact, 
\renewcommand{\theenumi}{\roman{enumi}}
\renewcommand{\labelenumi}{(\theenumi)}
\begin{enumerate}
\item each continuous eigenvalue is approximated by a number of
  discrete eigenvalues (counted with their multiplicity) that
  corresponds exactly to its multiplicity;
\item each discrete eigenvalue approximates a continuous eigenvalue.
\end{enumerate}

Condition~\eqref{eq:unifconv} does not provide any indication on the
approximation rate. 
It is common to split the convergence analysis for
eigenvalue problems into two steps: first, the convergence and the
absence of spurious modes is studied; then, suitable convergence
rates are proved.
We now report the main results about the spectral approximation for compact operators.
(cf.~\cite[Theorems 7.1--7.4]{Babuska-Osborn:1991}; see
also~\cite[Theorem 9.3--9.7]{Boffi:2010}),
which deal with the order of
convergence of eigenvalues and eigenfunctions.
\begin{theorem}
  \label{thm:BOeigfun}
  Let the uniform convergence~\eqref{eq:unifconv} holds true. 
  Let $\mu$ be an eigenvalue of $T$, with multiplicity $m$, and denote 
  the corresponding eigenspace by $E_{\mu}$. 
  Then, exactly $m$ discrete eigenvalues $\widetilde{\mu}_{1, \hh}, \dots, \widetilde{\mu}_{m, \hh}$
  (repeated according to their respective multiplicities) converges to $\mu$.  
  Moreover, let $\widetilde{E}_{\mu,\hh}$ be the direct sum of the eigenspaces
  corresponding to the discrete eigenvalues
  $\widetilde{\mu}_{1,\hh},\cdots,\widetilde{\mu}_{m,\hh}$ converging to $\mu$.
  Then
  \begin{equation}
    \delta(E_{\mu}, \widetilde{E}_{\mu,\hh})\leq
    C \norm{(T-\widetilde{T}_{\hh})_{|E_{\mu}}}{\mathcal{L}(\LTWO(\Omega))},
    \label{eq:eigenmodeRate}
  \end{equation}
  with 
  \begin{align*}
    \delta(E_{\mu},\widetilde{E}_{\mu,\hh}) = \max( \hat\delta(E_{\mu},\widetilde{E}_{\mu,\hh}) , \hat\delta(\widetilde{E}_{\mu,\hh},E_{\mu}) ) 
  \end{align*}
   where, in general,
   \[
     \hat\delta(U, \, W) = \sup_{\us\in U, \NORM{\us}{0} = 1}\inf_{w\in W}\norm{\us-w}{0}
  \]
  denotes the gap between $U$, $W \subseteq L^2(\Omega)$.
\end{theorem}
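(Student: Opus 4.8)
The plan is to follow the classical Riesz--Dunford functional calculus approach underlying the Babu\v{s}ka--Osborn theory. Since $T$ is compact and self-adjoint, $\mu$ is an isolated eigenvalue of finite multiplicity $m$, so I would begin by fixing a circle $\Gamma$ in the complex plane centered at $\mu$ whose interior contains $\mu$ but no other point of the spectrum of $T$, and defining the associated spectral (Riesz) projection
\[
  E = \frac{1}{2\pi i}\oint_{\Gamma} (z-T)^{-1}\,dz,
\]
whose range is exactly the eigenspace $E_{\mu}$.

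Next I would use the uniform convergence hypothesis \eqref{eq:unifconv} to transfer the spectral separation from $T$ to $\widetilde{T}_{\hh}$. On the compact contour $\Gamma$ the resolvent $(z-T)^{-1}$ is uniformly bounded; combining this with the second resolvent identity
\[
  (z-\widetilde{T}_{\hh})^{-1} - (z-T)^{-1}
  = (z-\widetilde{T}_{\hh})^{-1}(\widetilde{T}_{\hh}-T)(z-T)^{-1},
\]
a Neumann-series argument shows that for $\hh$ small enough $\Gamma$ also lies in the resolvent set of $\widetilde{T}_{\hh}$ and that $(z-\widetilde{T}_{\hh})^{-1}\to(z-T)^{-1}$ uniformly on $\Gamma$. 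Consequently the discrete projection $\widetilde{E}_{\hh}=\frac{1}{2\pi i}\oint_{\Gamma}(z-\widetilde{T}_{\hh})^{-1}\,dz$ is well defined, its range is the invariant subspace $\widetilde{E}_{\mu,\hh}$ spanned by the discrete eigenfunctions whose eigenvalues are enclosed by $\Gamma$, and $\NORM{E-\widetilde{E}_{\hh}}{\mathcal{L}(\LTWO(\Omega))}\to 0$. Since two projections whose difference has norm strictly less than one must have ranges of equal dimension, this forces $\dim\widetilde{E}_{\mu,\hh}=\dim E_{\mu}=m$ for $\hh$ small, which is precisely the statement that exactly $m$ discrete eigenvalues (counted with multiplicity) converge to $\mu$.

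Finally, for the gap estimate I would exploit that $E_{\mu}$ is invariant under $T$: for $z\in\Gamma$ the operator $(z-T)^{-1}$ restricted to $E_{\mu}$ acts as multiplication by the scalar $(z-\mu)^{-1}$. Inserting the resolvent identity into the contour integral defining $E-\widetilde{E}_{\hh}$ and restricting the argument to $E_{\mu}$, the ``difficult'' factor $(z-T)^{-1}$ becomes a bounded scalar, so only the action of $\widetilde{T}_{\hh}-T$ on $E_{\mu}$ survives and one obtains
\[
  \NORM{(E-\widetilde{E}_{\hh})_{|E_{\mu}}}{\mathcal{L}(\LTWO(\Omega))}
  \leq C\,\NORM{(T-\widetilde{T}_{\hh})_{|E_{\mu}}}{\mathcal{L}(\LTWO(\Omega))}.
\]
For $\us\in E_{\mu}$ this gives $\NORM{\us-\widetilde{E}_{\hh}\us}{0}\leq C\NORM{(T-\widetilde{T}_{\hh})_{|E_{\mu}}}{\mathcal{L}(\LTWO(\Omega))}$ with $\widetilde{E}_{\hh}\us\in\widetilde{E}_{\mu,\hh}$, bounding the one-sided gap $\hat\delta(E_{\mu},\widetilde{E}_{\mu,\hh})$; the reverse gap $\hat\delta(\widetilde{E}_{\mu,\hh},E_{\mu})$ then follows from the equality of dimensions, since for two subspaces of the same finite dimension the two directional gaps are comparable once one of them is smaller than one. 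The main obstacle is this last step: producing the estimate with $(T-\widetilde{T}_{\hh})$ \emph{restricted} to $E_{\mu}$ on the right-hand side rather than its full operator norm. The gain comes precisely from the invariance of $E_{\mu}$, which collapses the resolvent of $T$ on the eigenspace to a scalar and thereby isolates the action of the perturbation on the finite-dimensional space $E_{\mu}$.
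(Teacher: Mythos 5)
The paper never proves Theorem~\ref{thm:BOeigfun}: it is quoted as a known result, with a pointer to Babu\v{s}ka--Osborn (Theorems 7.1--7.4) and Boffi (Theorems 9.3--9.7), so your attempt can only be measured against the classical proof in those references --- and your sketch reconstructs that proof faithfully. The chain Riesz projection $\to$ second resolvent identity $\to$ Neumann-series bound for $(z-\widetilde{T}_{\hh})^{-1}$ on the contour $\to$ equality of dimensions because $\norm{E-\widetilde{E}_{\hh}}{\mathcal{L}(\LTWO(\Omega))}<1$ $\to$ scalar action of $(z-T)^{-1}$ on the invariant subspace $E_{\mu}$, which isolates $(T-\widetilde{T}_{\hh})_{|E_{\mu}}$ and yields \eqref{eq:eigenmodeRate}, is exactly Osborn's argument, and each step is sound. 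Two points should be made explicit in a full write-up. First, the passage from the one-sided gap $\hat\delta(E_{\mu},\widetilde{E}_{\mu,\hh})$ to the symmetric gap $\delta$ rests on a genuine (though standard) lemma, not a triviality: for subspaces of equal finite dimension, $\hat\delta(W,U)\leq\hat\delta(U,W)\bigl(1-\hat\delta(U,W)\bigr)^{-1}$ whenever $\hat\delta(U,W)<1$ (Kato; Osborn), so cite it rather than saying the two directional gaps are ``comparable.'' Second, $\widetilde{T}_{\hh}$ is self-adjoint only with respect to the discrete forms $\ash(\cdot,\cdot)$ and $\bsht(\cdot,\cdot)$, not with respect to the $\LTWO$-inner product; this is harmless precisely because your projection argument never uses self-adjointness of $\widetilde{T}_{\hh}$ (only that of $T$, to isolate $\mu$), but it means the discrete multiplicities your contour integral counts are a priori algebraic multiplicities, which is the sense in which the theorem's counting statement should be read.
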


Concerning the eigenvalue approximation error, we recall the following result.

\begin{theorem}
  \label{th:BOeig}
  Let the uniform convergence~\eqref{eq:unifconv} holds true. 
  Let $\phi_1,\ldots,\phi_m$ be a basis of the eigenspace $E_{\mu}$ of
  $T$ corresponding to the eigenvalue $\mu$.
  Then, for $i=1,\ldots,m$
  \begin{equation}
    \label{eq:eigRate}
    \abs{\mu - \widetilde{\mu}_{i,\hh}} \le C 
    \displaystyle
    \Big(\,\sum_{j,k=1}^m \abs{ b((T-\widetilde{T}_{\hh})\phi_k,\phi_j) } + 
    \norm{ (T-\widetilde{T}_{\hh})_{|E_{\mu}} }{\mathcal{L}(\LTWO(\Omega))}^2\,\Big),
  \end{equation}
  where $\widetilde{\mu}_{1,\hh},\ldots,\widetilde{\mu}_{m,\hh}$ are the $m$ discrete
  eigenvalues converging to $\mu$ repeated according to their
  multiplicities.
\end{theorem}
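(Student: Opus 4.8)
This is the double-order eigenvalue estimate of classical spectral approximation theory (Babu\v{s}ka--Osborn), valid for compact self-adjoint operators under uniform convergence. The plan is to reduce the bound to a finite-dimensional perturbation computation on the fixed eigenspace $E_\mu$ and then to invoke the abstract result; below I sketch the self-adjoint version in the present notation. Throughout I normalize the eigenbasis so that $b(\phi_j,\phi_k)=\delta_{jk}$, and I use that $T\phi_k=\mu\phi_k$ with $T$ self-adjoint with respect to $b$.

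First I would introduce the spectral projector $E$ of $T$ onto $E_\mu$ and the spectral projector $\widetilde{F}_\hh$ of $\widetilde{T}_\hh$ onto $\widetilde{E}_{\mu,\hh}$. Uniform convergence \eqref{eq:unifconv} yields $\norm{\widetilde{F}_\hh-E}{\mathcal{L}(\LTWO(\Omega))}\to 0$, and in fact $\norm{\widetilde{F}_\hh-E}{\mathcal{L}(\LTWO(\Omega))}\le C\,\norm{(T-\widetilde{T}_\hh)_{|E_\mu}}{\mathcal{L}(\LTWO(\Omega))}$ through the Riesz--Dunford contour representation of the spectral projectors. I would then compress $\widetilde{T}_\hh$ to $E_\mu$, setting $A_\hh:=E\,\widetilde{T}_\hh\,E|_{E_\mu}$. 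The two structural facts to establish are: (i) the $m$ eigenvalues of $A_\hh$ differ from $\widetilde{\mu}_{1,\hh},\dots,\widetilde{\mu}_{m,\hh}$ only by terms of order $\norm{\widetilde{F}_\hh-E}{\mathcal{L}(\LTWO(\Omega))}^2$, since replacing the invariant subspace $\widetilde{E}_{\mu,\hh}$ by its $b$-projection onto $E_\mu$ costs a quadratic perturbation; and (ii) writing $\widetilde{T}_\hh\phi_k=\mu\phi_k+(\widetilde{T}_\hh-T)\phi_k$, the matrix of $A_\hh$ in the basis $\{\phi_j\}$ reads
\begin{align*}
(\matM_\hh)_{jk}=\mu\,\delta_{jk}+b\big((\widetilde{T}_\hh-T)\phi_k,\phi_j\big)+O\big(\norm{\widetilde{F}_\hh-E}{\mathcal{L}(\LTWO(\Omega))}^2\big).
\end{align*}

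Since the eigenvalues of the correction $\matM_\hh-\mu\matI$ are bounded in modulus by $\sum_{j,k}\big|(\matM_\hh-\mu\matI)_{jk}\big|$, combining (i)--(ii) and subtracting $\mu$ yields exactly
\begin{align*}
|\mu-\widetilde{\mu}_{i,\hh}|\le C\Big(\sum_{j,k=1}^m\big|b((T-\widetilde{T}_\hh)\phi_k,\phi_j)\big|+\norm{(T-\widetilde{T}_\hh)_{|E_\mu}}{\mathcal{L}(\LTWO(\Omega))}^2\Big),
\end{align*}
the claimed bound. The step I expect to be the main obstacle is the rigorous verification that each remainder is genuinely quadratic in $\norm{(T-\widetilde{T}_\hh)_{|E_\mu}}{\mathcal{L}(\LTWO(\Omega))}$: one must pair the first-order displacement of the invariant subspace against a second first-order factor, and must account for the fact that $\widetilde{T}_\hh$ is self-adjoint with respect to the discrete product $\bsht$ rather than $b$, so the symmetry exploited for $T$ holds only up to a consistency error at the discrete level. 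This is precisely the content of the Osborn lemma, and in the paper it therefore suffices to verify \eqref{eq:unifconv} and invoke \cite[Theorems~7.1--7.4]{Babuska-Osborn:1991} and \cite[Theorems~9.3--9.7]{Boffi:2010}.
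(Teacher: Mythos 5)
The paper gives no proof of this theorem: it is stated as a recalled result from classical spectral approximation theory, with the argument delegated entirely to the cited references (Babu\v{s}ka--Osborn, Theorems~7.1--7.4; Boffi, Theorems~9.3--9.7). Your proposal is correct and takes essentially the same route --- your sketch outlines the standard spectral-projector/compression argument behind that classical result (including correctly flagging the quadratic-remainder step and the fact that $\widetilde{T}_{\hh}$ is self-adjoint with respect to $\bsht$ rather than $b$ as the delicate points), and your conclusion that it suffices to verify \eqref{eq:unifconv} and invoke those citations is exactly what the paper does.
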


\section{Convergence analysis and error estimates}
\label{sc:analysis}

In this section we study the convergence of the discrete eigenmodes
provided by the VEM approximation to the continuous ones.
We will consider the stabilized discrete
formulation~\eqref{eq:discreteEigPbm2}. The analysis can be easily applied 
to the non--stabilized one~\eqref{eq:discreteEigPbm}.

\subsection{Convergence analysis for the stabilized formulation}
\label{subsec:convnonstab}
In the case of the first VEM approximation of
problem~\eqref{eq:eigPbm}, which uses the stabilized form
$\bsht(\cdot,\cdot)$, the uniform convergence of the sequence of
operators ${\widetilde{T}_{\hh}}$ to $T$ directly stems from the $\LTWO$-\emph{a
  priori} error estimate of Theorem~\ref{thm:aprioriestimate}.

\begin{theorem}
  \label{thm:thm1}
  The family of operators $\widetilde{T}_{\hh}$ associated with
  problem~\eqref{eq:discreteSource2} converges uniformly to the operator
  $T$ associated with problem~\eqref{eq:sourcePbm}, that is,
  \begin{equation}
    \label{eq:uniformconv}
    \norm{T-\widetilde{T}_{\hh}}{\mathcal{L}(\LTWO(\Omega))}\to 0
    \quad\textrm{for}\quad\hh\to 0.
  \end{equation}
\end{theorem}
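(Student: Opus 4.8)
The plan is to unwind the definition of the operator norm and reduce the claim directly to the a priori error estimate of Theorem~\ref{thm:aprioriestimate}. By definition of the norm in $\mathcal{L}(\LTWO(\Omega))$,
\begin{equation*}
  \norm{T-\widetilde{T}_{\hh}}{\mathcal{L}(\LTWO(\Omega))}
  = \sup_{\fs\in\LTWO(\Omega),\,\norm{\fs}{0}=1}\norm{(T-\widetilde{T}_{\hh})\fs}{0},
\end{equation*}
and for each such $\fs$ the function $(T-\widetilde{T}_{\hh})\fs=\uss-\ussht$ is exactly the difference between the solution $\uss=T\fs$ of the continuous source problem~\eqref{eq:sourcePbm} and the solution $\ussht=\widetilde{T}_{\hh}\fs$ of the stabilized discrete source problem~\eqref{eq:discreteSource2}, both driven by the same forcing term $\fs$. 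Thus everything reduces to estimating $\norm{\uss-\ussht}{0}$ in a way that is \emph{uniform} over all $\fs$ with $\norm{\fs}{0}=1$.

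First I would apply the $\LTWO$-error estimate of Theorem~\ref{thm:aprioriestimate} to $\uss-\ussht$, obtaining
\begin{equation*}
  \norm{\uss-\ussht}{0}
  \leq C\left(\hh^{t+1}\snorm{\uss}{1+r}+\hh^2\norm{(I-\Piz{k})\fs}{0}\right),
  \qquad t=\min(k,r).
\end{equation*}
The crucial step is then to bound this right-hand side by a multiple of $\norm{\fs}{0}$ with a constant independent of $\fs$. This is supplied by the elliptic regularity estimate~\eqref{eq:regularity}, which gives $\snorm{\uss}{1+r}\leq C\norm{\fs}{0}$; no smoothness of $\fs$ beyond $\fs\in\LTWO(\Omega)$ is required, precisely because solving the source problem is itself regularizing. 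For the second term I would use that $\Piz{k}$ is an $\LTWO$-orthogonal projection, so that $\norm{(I-\Piz{k})\fs}{0}\leq\norm{\fs}{0}$. Combining the two and taking the supremum over $\norm{\fs}{0}=1$ yields
\begin{equation*}
  \norm{T-\widetilde{T}_{\hh}}{\mathcal{L}(\LTWO(\Omega))}\leq C\,\hh^{\min(t+1,2)},
\end{equation*}
which tends to zero as $\hh\to0$; this is exactly~\eqref{eq:uniformconv}, equivalently condition~\eqref{eq:unifconv2} with $\rho(\hh)=\hh^{\min(t+1,2)}$.

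The only genuine subtlety — the step I would be most careful about — is this passage from the $\fs$-dependent a priori estimate to the $\fs$-independent operator-norm bound; it works only because the solution operator controls $\snorm{\uss}{1+r}$ by $\norm{\fs}{0}$ via elliptic regularity. I should also note that the $\LTWO$-estimate of Theorem~\ref{thm:aprioriestimate} is stated for convex $\Omega$ (guaranteeing $r\geq1$); to cover a general domain one may instead combine the $\HONE$-estimate of the same theorem with the Poincar\'e--Friedrichs inequality $\norm{\vs}{0}^{2}\leq C_{PF}\snorm{\vs}{1,\hh}^{2}$ valid on $\HONEnc(\Th;k)$, which applies since $\uss-\ussht\in\HONEnc(\Th;k)$ (both $\uss\in\HONEzr(\Omega)$ and $\ussht\in\Vhk$ lie in this space). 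This alternative still produces a bound of order $\hh^{\min(t,1)}\norm{\fs}{0}\to0$ and hence the same conclusion without invoking convexity.
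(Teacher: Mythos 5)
Your proof is correct and follows essentially the same route as the paper: apply the $\LTWO$-estimate of Theorem~\ref{thm:aprioriestimate}, control $\snorm{\uss}{1+r}$ by $\norm{\fs}{0}$ via the regularity estimate~\eqref{eq:regularity} and $\norm{(I-\Piz{k})\fs}{0}$ by $\norm{\fs}{0}$ via orthogonality of the projection, then take the supremum over unit $\fs$ to get the bound $C\hh^{\min(t+1,2)}$. Your closing remark on the non-convex case is a genuine refinement the paper silently glosses over (its proof invokes the $\LTWO$-estimate while allowing $r\in(1/2,1]$, even though that estimate is stated for convex $\Omega$ with $r\geq1$), and your fallback via the $\HONE$-estimate and the Poincar\'e--Friedrichs inequality is a valid way to close that gap.
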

\begin{proof}
  Let $\uss$ and $\ussht$ be the solutions to the continuous and the
  discrete source problems~\eqref{eq:sourcePbm}
  and~\eqref{eq:discreteSource2} respectively.
  The $\LTWO$-estimate of Theorem~\ref{thm:aprioriestimate} with
  $\fs\in\LTWO(\Omega)$ and the stability
  condition~\eqref{eq:regularity} imply that
  \begin{align*} 
    \norm{ \uss-\ussht }{0} \leq C\hh^{\min(t+1,2)}\norm{\fs}{0}
  \end{align*}
  with $t=\min(k,r)$, $k\geq 1$ being the order of the method and $r$
  at least in $(1\slash{2},1]$ being the regularity index of the
  solution $\uss\in\HS{1+r}(\Omega)$ to the continuous source problem
  in equation~\eqref{eq:regularity}.
  From this inequality it follows that
  \begin{align*}
    \norm{T-\widetilde{T}_{\hh}}{\mathcal{L}(\LTWO(\Omega))} 
    = \sup_{\fs\in\LTWO(\Omega)}\dfrac{\norm{T\fs-\widetilde{T}_{\hh}\fs}{0}}{\norm{\fs}{0}} 
    = \sup_{\fs\in\LTWO(\Omega)}\dfrac{\norm{\uss-\ussht}{0}}{\norm{\fs}{0}}
    \leq C\hh^{\min(t+1,2)}.
  \end{align*}
\end{proof}
\begin{remark}
   \label{rm:regularity}
   We observe that if $f\in\mathcal{E}_{\mu}$ then, thanks to the $L^2$ a priori error estimate 
   in Remark~\eqref{rm:estimate-regular-load}, it holds 
   \begin{align*}
   \norm{(T-\widetilde{T}_{\hh})_{|E_{\mu}}}{\mathcal{L}(\LTWO(\Omega))}
   = \sup_{\fs\in\mathcal{E}_{\mu}}\dfrac{\norm{T\fs-\widetilde{T}_{\hh}\fs}{0}}{\norm{\fs}{0}} 
    = \sup_{\fs\in\mathcal{E}_{\mu}}\dfrac{\norm{\uss-\ussht}{0}}{\norm{\fs}{0}}
    \leq C\hh^{t+1}.
   \end{align*}
\end{remark}
Putting together Theorem~\ref{thm:BOeigfun}~ ,Theorem~\ref{thm:thm1}, and Remark~\ref{rm:regularity}, we can state the following result. 
 \begin{theorem}
   Let $\mu$ be an eigenvalue of $T$, with multiplicity $m$, and
   denote the corresponding eigenspace by $E_{\mu}$.  
   Then, exactly $m$ discrete eigenvalues $\widetilde{\mu}_{1, \hh},
   \dots, \widetilde{\mu}_{m, \hh}$ (repeated according to their
   respective multiplicities) converges to $\mu$.
  Moreover, let $\widetilde{E}_{\mu,\hh}$ be the direct sum of the eigenspaces
  corresponding to the discrete eigenvalues
  $\widetilde{\mu}_{1,\hh},\cdots,\widetilde{\mu}_{m,\hh}$ converging to $\mu$.
  Then
  \begin{equation}
    \delta(E_{\mu}, \widetilde{E}_{\mu,\hh})\leq
    C \hh^{t+1}.
  \end{equation}
 \end{theorem}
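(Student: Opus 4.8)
The plan is to obtain the statement as a direct corollary of the abstract spectral approximation theory, chaining together the three ingredients already assembled: the uniform convergence of Theorem~\ref{thm:thm1}, the abstract eigenspace gap estimate of Theorem~\ref{thm:BOeigfun}, and the sharpened convergence rate on the eigenspace recorded in Remark~\ref{rm:regularity}. Since all the analytic work is already in place, the proof reduces to invoking these results in the correct order.

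First I would check that the hypothesis of Theorem~\ref{thm:BOeigfun} holds. This is exactly the content of Theorem~\ref{thm:thm1}, which guarantees
\begin{equation*}
  \norm{T-\widetilde{T}_{\hh}}{\mathcal{L}(\LTWO(\Omega))}\to 0
  \quad\textrm{as}\quad\hh\to 0.
\end{equation*}
With uniform convergence in hand, Theorem~\ref{thm:BOeigfun} applies verbatim. Its first conclusion, namely that exactly $m$ discrete eigenvalues $\widetilde{\mu}_{1,\hh},\dots,\widetilde{\mu}_{m,\hh}$ (counted with multiplicity) converge to $\mu$, gives the first assertion immediately, while its second conclusion supplies the eigenspace gap estimate
\begin{equation*}
  \delta(E_{\mu},\widetilde{E}_{\mu,\hh})
  \leq C\,\norm{(T-\widetilde{T}_{\hh})_{|E_{\mu}}}{\mathcal{L}(\LTWO(\Omega))}.
\end{equation*}

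The only substantive step is to estimate the \emph{restricted} operator norm on the right-hand side, and this is precisely where the power $\hh^{t+1}$ enters. I would invoke Remark~\ref{rm:regularity}: every $\fs\in E_{\mu}$ is itself an eigenfunction, hence solves the source problem with smooth datum $\lambda\fs$ and obeys the regularity bound $\snorm{\fs}{1+r}\le C\norm{\fs}{0}$ from Remark~\ref{rm:estimate-regular-load}. Feeding this regularity into the $\LTWO$ a priori estimate of Theorem~\ref{thm:aprioriestimate} yields $\norm{(T-\widetilde{T}_{\hh})_{|E_{\mu}}}{\mathcal{L}(\LTWO(\Omega))}\le C\hh^{t+1}$, and substituting this into the gap estimate above delivers $\delta(E_{\mu},\widetilde{E}_{\mu,\hh})\le C\hh^{t+1}$.

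I do not anticipate a genuine obstacle; the single point requiring care is conceptual rather than technical. The full operator norm $\norm{T-\widetilde{T}_{\hh}}{\mathcal{L}(\LTWO(\Omega))}$ saturates at $\hh^{\min(t+1,2)}$ for an arbitrary $\LTWO$ load, whereas its restriction to $E_{\mu}$ retains the full rate $\hh^{t+1}$ because the data are eigenfunctions and therefore smooth. Recognising that the gap estimate of Theorem~\ref{thm:BOeigfun} is phrased in terms of the \emph{restricted} norm is exactly what allows the eigenfunction approximation rate $\hh^{t+1}$ to propagate to the eigenspace gap, completing the proof.
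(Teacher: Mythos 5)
Your proposal is correct and follows essentially the same route as the paper, which obtains this theorem precisely by combining Theorem~\ref{thm:BOeigfun}, Theorem~\ref{thm:thm1}, and Remark~\ref{rm:regularity}. In particular, you correctly identified the one point of substance — that the gap estimate involves the operator norm \emph{restricted} to $E_{\mu}$, where the eigenfunction regularity of Remark~\ref{rm:estimate-regular-load} upgrades the generic rate $\hh^{\min(t+1,2)}$ to the full rate $\hh^{t+1}$ — which is exactly the mechanism the paper relies on.
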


A direct consequence of the previous result
(cf.~\cite{Babuska-Osborn:1991,Boffi:2010}) is the following one.
\begin{theorem}
  \label{th:convl2}
  Let $\us$ be a unit eigenfunction associated with the eigenvalue
  $\lambda$ of multiplicity $m$ and let $\wsht^{(1)},\ldots,\wsht^{(m)}$
  denote linearly independent eigenfunctions associated with the $m$
  discrete eigenvalues converging to $\lambda$. 
  Then there exists $\usht\in\SPAN{\wsht^{(1)},\ldots,\wsht^{(m)}}$
  such that
  \begin{align*}
    \norm{\us-\usht}{0}\leq C\hh^{t+1},
  \end{align*}
  where $t=\min\{k,r\}$, being $k$ the order of the method and $r$ the
  regularity index of $\us$.
\end{theorem}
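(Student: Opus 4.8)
The plan is to obtain this estimate as an immediate unwinding of the gap bound $\delta(E_\mu,\widetilde{E}_{\mu,\hh})\le C\hh^{t+1}$ established in the preceding theorem, specialized to the single unit vector $\us$.

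First I would pass from the eigenpairs of the problem to the eigenpairs of the solution operators, using the correspondence recalled in Section~\ref{sc:compact}: $(\lambda,\us)$ is an eigenpair of~\eqref{eq:eigPbm} if and only if $(\mu,\us)$, with $\mu=1\slash\lambda$, is an eigenpair of $T$, and likewise the discrete eigenvalues $\widetilde{\lambda}_{i,\hh}$ converging to $\lambda$ correspond to the eigenvalues $\widetilde{\mu}_{i,\hh}=1\slash\widetilde{\lambda}_{i,\hh}$ of $\widetilde{T}_{\hh}$ converging to $\mu$. Hence $\us\in E_\mu$, and $\SPAN{\wsht^{(1)},\ldots,\wsht^{(m)}}$ is precisely the discrete invariant subspace $\widetilde{E}_{\mu,\hh}$ appearing in the preceding theorem.

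Next I would apply the definition of the one-sided gap. Since $\norm{\us}{0}=1$ and $\us\in E_\mu$, the vector $\us$ is one of the competitors in the supremum defining $\hat\delta(E_\mu,\widetilde{E}_{\mu,\hh})$, so that $\inf_{w\in\widetilde{E}_{\mu,\hh}}\norm{\us-w}{0}\le\hat\delta(E_\mu,\widetilde{E}_{\mu,\hh})\le\delta(E_\mu,\widetilde{E}_{\mu,\hh})$. Because $\widetilde{E}_{\mu,\hh}$ is finite-dimensional, this infimum is attained; I would take $\usht$ to be a minimizer, for instance the $\LTWO$-orthogonal projection of $\us$ onto $\widetilde{E}_{\mu,\hh}$. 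Invoking the bound $\delta(E_\mu,\widetilde{E}_{\mu,\hh})\le C\hh^{t+1}$ of the preceding theorem then yields $\norm{\us-\usht}{0}\le C\hh^{t+1}$ with $\usht\in\SPAN{\wsht^{(1)},\ldots,\wsht^{(m)}}$, which is the assertion.

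There is no genuine obstacle: the statement is a direct reformulation of the gap estimate for a single unit eigenfunction. The only points requiring a word of care are the passage $\lambda\mapsto\mu=1\slash\lambda$ between the spectra of the problem and of $T$, and the remark that the infimum in the definition of $\hat\delta$ is actually attained on the finite-dimensional space $\widetilde{E}_{\mu,\hh}$, so that one recovers a bona fide element $\usht$ of $\SPAN{\wsht^{(1)},\ldots,\wsht^{(m)}}$ rather than merely an approximating sequence.
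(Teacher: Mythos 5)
Your proposal is correct and takes essentially the same route as the paper: the paper gives no separate proof of Theorem~\ref{th:convl2}, stating it as a direct consequence of the preceding gap estimate $\delta(E_{\mu},\widetilde{E}_{\mu,\hh})\leq C\hh^{t+1}$, which is exactly what you unwind. Your two points of care --- the spectral correspondence $\mu=1\slash\lambda$ identifying $\SPAN{\wsht^{(1)},\ldots,\wsht^{(m)}}$ with $\widetilde{E}_{\mu,\hh}$, and taking $\usht$ as the attained minimizer (the $\LTWO$-orthogonal projection of $\us$ onto the finite-dimensional space $\widetilde{E}_{\mu,\hh}$) --- are precisely the details the paper leaves implicit by citing the standard spectral approximation references.
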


\medskip
Using Theorem~\ref{thm:aprioriestimate} we can obtain an estimate of
the conformity error~\eqref{eq:def:Nh} better than the one in
Lemma~\eqref{le:Nh} when its argument are the solution to the
continuous problem~\eqref{eq:sourcePbm} and the discrete
problem~\eqref{eq:discreteSource2}.
It is worth noting that the solution to the discrete problem does not
need to be an approximation of the solution to the continuous problem, cf.~\cite{Boffi:2010}. 
\begin{lemma}\label{lemma:Nh:def}
  Consider $\us\in\HS{1+r}(\Omega)$, $\vs\in\LTWO(\Omega)$ and let
  $T\us,T\vs\in\HS{1+r}(\Omega)$ with $r >1/2$ be, 
  respectively, the solutions to problem~\eqref{eq:sourcePbm} 
  with load term $\us$ and $\vs$.
  Assume that \ASSUM{A0} is satisfied and let
  $\widetilde{T}_{\hh}\us\in\Vhk\subset\HONEnc(\Th;k)$ for some integer $k\geq1$
  be the virtual element approximation of $T\us$ that solves
  problem~\eqref{eq:discreteSource2}.
  Then, there exists a constant $C>0$, independent of $\hh$, 
  such that
  \begin{align}
    \abs{\calNh(T\vs, \widetilde{T}_{\hh}\us)} 
    \leq C\hh^{2t}\,\snorm{T\vs}{1+r}\,\snorm{T\us}{1+r}
  \end{align}
  where $t=\min\{k,r\}$ and $\calNh(\us,\vs)$ is the conformity error
  defined in \eqref{eq:def:Nh}.
\end{lemma}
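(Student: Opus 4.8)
The plan is to start from the closed form of the conformity error given in \eqref{eq:def:Nh}. Since $T\vs\in\HS{1+r}(\Omega)$ with $1+r>3\slash{2}$ solves the source problem with load $\vs$, and $\widetilde{T}_{\hh}\us\in\Vhk\subset\HONEnc(\Th;k)$, formula \eqref{eq:def:Nh} is directly applicable and yields
\begin{align*}
  \calNh(T\vs,\widetilde{T}_{\hh}\us)
  &= \sum_{\S\in\Sset_{\hh}}\int_{\S}\nabla(T\vs)\cdot\jump{\widetilde{T}_{\hh}\us}\,{\rm d}\S \\
  &= \sum_{\S\in\Sset_{\hh}}\int_{\S}\big(\nabla(T\vs)\cdot\norS\big)\,\big(\jump{\widetilde{T}_{\hh}\us}\cdot\norS\big)\,{\rm d}\S ,
\end{align*}
the second identity following because $\jump{\widetilde{T}_{\hh}\us}$ is parallel to $\norS$.

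Next I would exploit two cancellations that are the structural key to the doubling of the rate. First, by the nonconforming constraint in \eqref{eq:H1-nc:def} the quantity $\jump{\widetilde{T}_{\hh}\us}\cdot\norS$ has vanishing moments up to order $k-1$ on each side, so I may subtract from the first factor its $\LTWO(\S)$-projection $\PiSz{k-1}(\nabla(T\vs)\cdot\norS)$ without changing the integral. Second, since $T\us\in\HONEzr(\Omega)$ is single-valued, $\jump{T\us}=0$, so I may replace $\jump{\widetilde{T}_{\hh}\us}$ by $\jump{e}$, with $e:=\widetilde{T}_{\hh}\us-T\us$. This turns each side integral into the product of two genuine errors,
\begin{align*}
  \calNh(T\vs,\widetilde{T}_{\hh}\us)
  = \sum_{\S\in\Sset_{\hh}}\int_{\S}\big((I-\PiSz{k-1})(\nabla(T\vs)\cdot\norS)\big)\,\big(\jump{e}\cdot\norS\big)\,{\rm d}\S .
\end{align*}

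I would then estimate the two factors separately. A side-wise Cauchy--Schwarz combined with the trace-approximation bound \eqref{eq:approx:pitrace} (in its fractional form, applied to $\nabla(T\vs)\cdot\norS\in\HS{r}$ projected onto degree $k-1$) controls the first factor by $C\hS^{t-1\slash{2}}\snorm{T\vs}{1+r,\Omega_{\S}}$, while the trace inequality \eqref{eq:trace:inequality} applied element-wise to $e$ controls the jump by $C\big(\hS^{-1\slash{2}}\norm{e}{0,\Omega_{\S}}+\hS^{1\slash{2}}\snorm{e}{1,\Omega_{\S}}\big)$. Squaring, summing over the skeleton, and using that \ASSUM{A0} makes the patches $\Omega_{\S}$ overlap a uniformly bounded number of times, a discrete Cauchy--Schwarz gives
\begin{align*}
  \abs{\calNh(T\vs,\widetilde{T}_{\hh}\us)}
  \leq C\,\hh^{t-1\slash{2}}\,\snorm{T\vs}{1+r}\,\big(\hh^{-1\slash{2}}\norm{e}{0}+\hh^{1\slash{2}}\snorm{e}{1,\hh}\big) .
\end{align*}

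It remains to insert the a priori estimates of Theorem~\ref{thm:aprioriestimate} for $e=\widetilde{T}_{\hh}\us-T\us$, namely $\snorm{e}{1,\hh}\leq C\hh^{t}\snorm{T\us}{1+r}$ and $\norm{e}{0}\leq C\hh^{t+1}\snorm{T\us}{1+r}$; here I would use the extra hypothesis $\us\in\HS{1+r}(\Omega)$ to check that the load-dependent terms $\hh\norm{(I-\Piz{k})\us}{0}$ and $\hh^{2}\norm{(I-\Piz{k})\us}{0}$ are of higher order in $\hh$ and can be absorbed (and in the intended eigenfunction application $\us=\lambda\,T\us$, so $\snorm{\us}{1+r}\leq C\snorm{T\us}{1+r}$ makes this clean). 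Substituting, the two contributions $\hh^{t-1}\norm{e}{0}$ and $\hh^{t}\snorm{e}{1,\hh}$ both collapse to $\hh^{2t}\snorm{T\us}{1+r}$, which proves the claim. The main obstacle is exactly this final balancing: the negative power $\hh^{-1\slash{2}}$ lost through the trace inequality must be recovered by the \emph{improved} $\LTWO$-rate $\hh^{t+1}$ of $\norm{e}{0}$, so the whole argument hinges on the duality (Aubin--Nitsche) estimate of Theorem~\ref{thm:aprioriestimate} rather than on the energy estimate alone.
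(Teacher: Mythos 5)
Your set-up coincides with the paper's: the identity from \eqref{eq:def:Nh}, the insertion of $(I-\PiSz{k-1})$ against $\nabla T\vs$ (licit because the moments of $\jump{\widetilde{T}_{\hh}\us}$ up to order $k-1$ vanish), the substitution of $\jump{\widetilde{T}_{\hh}\us}$ by $\jump{e}$ with $e=(\widetilde{T}_{\hh}-T)\us$ using $\jump{T\us}=0$, and the bound $C\hS^{t-1/2}\snorm{T\vs}{1+r,\Omega_{\S}}$ for the first factor via \eqref{eq:approx:pitrace} are all exactly the paper's steps. The divergence --- and the gap --- is in the jump factor. You estimate $\jump{e}$ by the raw trace inequality \eqref{eq:trace:inequality}, which costs a term $\hS^{-1/2}\norm{e}{0,\Omega_{\S}}$, and you then must invoke the $\LTWO$ (Aubin--Nitsche) estimate $\norm{e}{0}\leq C\hh^{t+1}\snorm{T\us}{1+r}$ to absorb the negative power. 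But that estimate is stated in Theorem~\ref{thm:aprioriestimate} only \emph{for convex} $\Omega$, whereas the lemma is claimed for every $r>1/2$ --- precisely so that it covers non-convex domains such as the L-shaped domain of Test Case~2, where $r=2/3-\epsilon$. On such a domain the duality argument controls $\norm{e}{0}$ only by $C\hh^{t+r}$ (the dual solution has the same limited regularity), so your final bound degrades to $C\hh^{2t-(1-r)}$, falling short of the claimed $\hh^{2t}$ by the factor $\hh^{1-r}$; the eigenvalue rate $4/3$ observed numerically on the L-shape would not be recovered by your argument.

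The paper's proof avoids the $\LTWO$ estimate altogether, and this is the one idea your proposal misses: since the moments of $\jump{\widetilde{T}_{\hh}\us}$ vanish up to order $k-1\geq 0$ and $\jump{T\us}=0$, one may also insert $(I-\PiSz{0})$ in front of the jump, writing the side integrand as $(I-\PiSz{k-1})\nabla T\vs\cdot(I-\PiSz{0})\jump{e}$. Then \eqref{eq:approx:pitrace} with $s=1$ (projection of degree $0$), applied elementwise to $e$, gives $\norm{(I-\PiSz{0})\jump{e}\cdot\norS}{0,\S}\leq C\hS^{1/2}\snorm{e}{1,\Omega_{\S}}$, with no negative power of $\hS$. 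Consequently only the energy estimate $\snorm{e}{1,\hh}\leq C\hh^{t}\snorm{T\us}{1+r}$ of Remark~\ref{rm:estimate-regular-load} is needed, and the product $\hh^{t-1/2}\cdot\hh^{t+1/2}$ yields $\hh^{2t}$ for every $r>1/2$, convex or not. With this replacement of the trace step, your argument becomes the paper's proof essentially verbatim.
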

\begin{proof}
  We start the following chain of developments from the definition of
  the conformity error given in~\eqref{eq:def:Nh}, note that
  $\jump{T\us}=0$ on every mesh side and that the moments  
  up to order $k-1$ of $\jump{\widetilde{T}_h\us}$ across all
  mesh interfaces are zero, and apply the Cauchy-Schwarz
  inequality in the last two steps:
  \begin{align*}
    \calNh(T\vs,\widetilde{T}_{\hh}\us) 
    & = \sum_{\S\in\Sset_{\hh}}\int_{\S}\nabla T\vs\cdot\jump{\widetilde{T}_{\hh}\us}\, {\rm d}\S \\
    & = \sum_{\S\in\Sset_{\hh}}\int_{\S}\big(I-\PiSz{k-1}\big)\nabla T\vs\cdot\big(I-\PiSz{0}\big)\jump{\widetilde{T}_{\hh}\us}\, {\rm d}\S \\
    & = \sum_{\S\in\Sset_{\hh}}\int_{\S}\big(I-\PiSz{k-1}\big)\nabla T\vs\cdot\Big(
    \big(I-\PiSz{0}\big)\jump{\widetilde{T}_{\hh}\us})-\big(I-\PiSz{0}\big)\jump{T\us}\Big)\, {\rm d}\S \\
    & = \sum_{\S\in\Sset_{\hh}}\int_{\S}\big(I-\PiSz{k-1}\big)\nabla T\vs\cdot\big(I-\PiSz{0}\big)\jump{(\widetilde{T}_{\hh}-T)\us}\, {\rm d}\S\\[0.35em]
    & \leq\sum_{\S\in\Sset_{\hh}}\norm{\big(I-\PiSz{k-1}\big)\nabla T\vs\cdot\norS}{0,\S}\,\norm{(I-\PiSz{0})\jump{(\widetilde{T}_{\hh}-T)\us}\cdot\norS}{0,\S}\\
    & \leq
    \left[\,\sum_{\S\in\Sset_{\hh}}\norm{\big(I-\PiSz{k-1}\big)\nabla T\vs\cdot\norS}{0,\S}^2\right]^{\frac{1}{2}}\times
    \left[\,\sum_{\S\in\Sset_{\hh}}\norm{(I-\PiSz{0})\jump{(\widetilde{T}_{\hh}-T)\us}\cdot\norS}{0,\S}^2\right]^{\frac{1}{2}}\\[0.5em]
    &= \mathcal{N}_1\times\mathcal{N}_2.
  \end{align*}
  Trace inequality~\eqref{eq:approx:pitrace} yields 
  \begin{align*}
    \norm{(I-\Pi_{k-1}^{0,\S})\nabla T\vs\cdot\norS}{0,\S}
    \leq C\hS^{t-\frac{1}{2}}\snorm{T\vs}{1+r,\Omega_{\S}},
  \end{align*}
  and summing over all the mesh sides, noting that $\hS\leq\hh$, the
  number of sides per element is uniformly bounded due to \ASSUM{A0}
  and using definition~\eqref{eq:norm-broken} yield
  \begin{align*}
    \abs{\mathcal{N}}_1^2
    = \sum_{\S\in\Sset_{\hh}}\norm{\big(I-\PiSz{k-1}\big)\nabla T\vs\cdot\norS}{0,\S}^2
    \leq C\big(\hh^{t-\frac{1}{2}}\big)^2\sum_{\S\in\Sset_{\hh}}\snorm{T\vs}{1+r,\Omega_{\S}}^2
    \leq C\big(\hh^{t-\frac{1}{2}}\big)^2\sum_{\P\in\Th}\snorm{T\vs}{1+r,\P}^2,
  \end{align*}
  and finally
  \begin{align*}
    \abs{\mathcal{N}_1} \leq C\hh^{t-\frac{1}{2}}\snorm{T\vs}{1+r}.
  \end{align*}
  Similarly, trace inequality~\eqref{eq:approx:pitrace} and the jump definition yield 
  \begin{align*}
    \norm{(I-\PiSz{0})\jump{(\widetilde{T}_{\hh}-T)\us}\cdot\norS}{0,\S}
    \leq C\hS^{\frac{1}{2}}\snorm{(\widetilde{T}_{\hh}-T)\us}{1,\Omega_{\S}}
  \end{align*}
  and using the same arguments as above we have that
  \begin{align*}
    \abs{\mathcal{N}_2}^2
    &= \sum_{\S\in\Sset_{\hh}}\norm{(I-\Pi^{0,\S}_{0})\jump{(\widetilde{T}_{\hh}-T)\us}\cdot\norS}{0,\S}^2 
    \leq C\hh\sum_{\S\in\Sset_{\hh}}\snorm{(\widetilde{T}_{\hh}-T)\us}{1,\Omega_{\S}}^2 
    \leq C\hh\sum_{\P\in\Th}\snorm{(\widetilde{T}_{\hh}-T)\us}{1,\P}^2\\[0.5em]
    &= C\hh\snorm{(\widetilde{T}_{\hh}-T)\us}{1,\hh}^2.
  \end{align*}
  Using this relation and the \emph{a priori} error estimate in the
  energy norm of Remark~\ref{rm:estimate-regular-load} finally yield:
  \begin{align*}
    \abs{\mathcal{N}}_2\leq\hh^{t+\frac{1}{2}}\snorm{T\us}{1+r}.
  \end{align*}
  The assertion of the lemma follows by collecting the above estimates
  together.
\end{proof}

We now prove the usual double order convergence of the eigenvalues.
\begin{theorem}
  \label{theorem:double:convergence:rate}
  Let $\lambda$ be an eigenvalue of problem~\eqref{eq:eigPbm} with multiplicity $m$, 
  and denote by $\widetilde{\lambda}_{1,h},\cdots,\widetilde{\lambda}_{m,h}$ the $m$ discrete eigenvalues 
  converging towards $\lambda$. 
  Then the following optimal double order convergence holds:
  \begin{align}
    \label{eq:double:convergence:rate}
    \abs{\lambda - \widetilde{\lambda}_{i,\hh}}
    \leq C \hh^{2t}\quad\forall i=1,\ldots,m, 
  \end{align}
  with $t=\min\{k,r\}$, being $k$ the order of the method and $r$ the
  regularity index of the eigenfunction corresponding to $\lambda$.
\end{theorem}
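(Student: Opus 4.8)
The plan is to invoke the abstract eigenvalue estimate of Theorem~\ref{th:BOeig} for the solution operator $T$ and its stabilized discrete counterpart $\widetilde{T}_{\hh}$, whose eigenvalues are $\mu=1/\lambda$ and $\widetilde{\mu}_{i,\hh}=1/\widetilde{\lambda}_{i,\hh}$. Since $\widetilde{\lambda}_{i,\hh}\to\lambda$, the factor relating $\abs{\mu-\widetilde{\mu}_{i,\hh}}$ to $\abs{\lambda-\widetilde{\lambda}_{i,\hh}}$ is uniformly bounded below, so it suffices to bound the right-hand side of \eqref{eq:eigRate}. Its second contribution $\norm{(T-\widetilde{T}_{\hh})_{|E_{\mu}}}{\mathcal{L}(\LTWO(\Omega))}^2$ is $O(\hh^{2(t+1)})$ by Remark~\ref{rm:regularity}, hence negligible with respect to $\hh^{2t}$. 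The whole difficulty therefore reduces to proving $\abs{\bs((T-\widetilde{T}_{\hh})\phi_k,\phi_j)}\le C\hh^{2t}$ for each pair of eigenfunctions $\phi_k,\phi_j$ spanning $E_{\mu}$; the finite double sum in \eqref{eq:eigRate} then preserves this rate.

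To this end I would set $w:=T\phi_k$, $w_{\hh}:=\widetilde{T}_{\hh}\phi_k$, $z:=T\phi_j$, $z_{\hh}:=\widetilde{T}_{\hh}\phi_j$ and derive a \emph{master identity} for $\bs((T-\widetilde{T}_{\hh})\phi_k,\phi_j)=\bs(w-w_{\hh},\phi_j)$. The building blocks are: the variational characterisation $\as(z,v)=\bs(\phi_j,v)$ for $v\in\Vspace$; the broken-form relation $\as(z,v_{\hh})=\bs(\phi_j,v_{\hh})+\calNh(z,v_{\hh})$ for nonconforming $v_{\hh}$, which is exactly \eqref{eq:def:Nh}; the discrete equations $\ash(w_{\hh},v_{\hh})=\bsht(\phi_k,v_{\hh})$ and $\ash(z_{\hh},v_{\hh})=\bsht(\phi_j,v_{\hh})$; and the symmetry of $\as,\ash,\bs,\bsht$, which yields the cross-relation $\bsht(\phi_j,w_{\hh})=\bsht(\phi_k,z_{\hh})$. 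Inserting $z=(z-z_{\hh})+z_{\hh}$ and repeatedly trading continuous forms for discrete ones, I expect to arrive at
\begin{align*}
  \bs(w-w_{\hh},\phi_j)
  &= \as(z-z_{\hh},\,w-w_{\hh})
  + \big(\bs-\bsht\big)(\phi_k,z_{\hh})
  - \big(\as-\ash\big)(z_{\hh},w_{\hh}) \\
  &\quad + \calNh(w,z_{\hh}) + \calNh(z,w_{\hh}),
\end{align*}
a decomposition into a product of two solution errors, the consistency errors of the two discrete forms, and two conformity errors. The leading term is controlled by Cauchy--Schwarz in the broken seminorm together with the energy estimate of Remark~\ref{rm:estimate-regular-load}, giving $\snorm{(T-\widetilde{T}_{\hh})\phi}{1,\hh}\le C\hh^{t}$ for eigenfunction loads, hence $\abs{\as(z-z_{\hh},w-w_{\hh})}\le C\hh^{2t}$. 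The conformity errors $\calNh(w,z_{\hh})$ and $\calNh(z,w_{\hh})$ are precisely of the form covered by Lemma~\ref{lemma:Nh:def}, so each is $\le C\hh^{2t}\snorm{T\phi_k}{1+r}\snorm{T\phi_j}{1+r}$.

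For the consistency error of $\ash$ I would use its $k$-consistency and the orthogonality of the elliptic projection to reduce $(\as-\ash)(z_{\hh},w_{\hh})$ to a pairing of $(I-\Pin{k})z_{\hh}$ and $(I-\Pin{k})w_{\hh}$, each $O(\hh^{t})$ in the broken $H^1$-seminorm because $z_{\hh},w_{\hh}$ are quasi-optimal approximations of the smooth functions $z,w$; this yields $O(\hh^{2t})$. The step I expect to be the most delicate is the consistency error of the $\LTWO$-form, $(\bs-\bsht)(\phi_k,z_{\hh})$: the crude bound $\norm{(I-\Piz{k})\phi_k}{0}\,\snorm{z_{\hh}}{1,\hh}\le C\hh^{t+2}$ coming from \eqref{eq:bs-bsht} is insufficient for high order $k$. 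The fix is to exploit the $k$-consistency of $\bsht$, which annihilates the polynomial part and leaves only $(I-\Piz{k})\phi_k$; by $\LTWO$-orthogonality this can be paired against $(I-\Piz{k})z_{\hh}$ rather than against $z_{\hh}$ itself, and since $\norm{(I-\Piz{k})z_{\hh}}{0}\le C\hh^{t+1}$ (again using that $z_{\hh}$ approximates the smooth $z$), one obtains the safely higher-order bound $O(\hh^{2t+2})$. Collecting the five estimates gives $\abs{\bs((T-\widetilde{T}_{\hh})\phi_k,\phi_j)}\le C\hh^{2t}$, and feeding this back into \eqref{eq:eigRate} together with the negligible squared term yields $\abs{\mu-\widetilde{\mu}_{i,\hh}}\le C\hh^{2t}$, equivalently $\abs{\lambda-\widetilde{\lambda}_{i,\hh}}\le C\hh^{2t}$.
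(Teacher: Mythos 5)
Your proposal is correct and follows essentially the same route as the paper: the same reduction via Theorem~\ref{th:BOeig}, the same four-term decomposition of $\bs((T-\widetilde{T}_{\hh})\phi_k,\phi_j)$ (product of energy errors, $L^2$-form consistency error, grad-form consistency error, and the two conformity terms handled by Lemma~\ref{lemma:Nh:def}), and the same refined pairing of $(I-\Piz{k})\phi_k$ against $(I-\Piz{k})\widetilde{T}_{\hh}\phi_j$ to push the $\bsht$-consistency term to $O(\hh^{2t+2})$. The only cosmetic difference is that for the $\ash$-consistency term you invoke the orthogonality of $\Pin{k}$ directly, whereas the paper inserts the polynomial approximants $(T\us)_{\pi}$, $(T\vs)_{\pi}$ via $k$-consistency; both yield the same $O(\hh^{2t})$ bound.
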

\begin{proof}
  We use the result stated in Theorem~\ref{th:BOeig}. 
  It is clear that the second term in the estimate of
  Theorem~\ref{th:BOeig} is of double order compared to the
  $\HONE$-rate of convergence. 
  Hence, we analyse in detail the term
  $b \left((T-\widetilde{T}_{\hh})\phi_j,\phi_k\right)$.
  Let $\us$ and $\vs$ be two eigenfunctions associated with the
  eigenvalue $\lambda$. 
  Then, we note that $(T-\widetilde{T}_{\hh})\us\in\HONEnc(\Th;k)$ and begin the
  chain of developments that follows from the definition of the
  conformity error in~\eqref{eq:def:Nh}:
  \begin{align}
    \label{eq:eigproof}
    \begin{array}{rll}
      &b \left( (T-\widetilde{T}_{\hh})\us,\vs \right) 
      = \as(T\vs,(T-\widetilde{T}_{\hh})\us) - \mathcal{N}_{\hh}(T\vs,(T-\widetilde{T}_{\hh})\us)                                              & \mbox{\big[\textrm{note~that~}$\mathcal{N}_{\hh}(T\vs,T\us)=0$\big]}     \nonumber\\[0.5em]
      &\qquad = \as(T\vs,(T-\widetilde{T}_{\hh})\us) + \mathcal{N}_{\hh}(T\vs,\widetilde{T}_{\hh}\us)                                          & \mbox{\big[\textrm{add and subtract~}$\widetilde{T}_{\hh}\vs$ in the first term\big]}\nonumber\\[0.5em]
      &\qquad = \as((T-\widetilde{T}_{\hh})\vs,(T-\widetilde{T}_{\hh})\us)  + \as(\widetilde{T}_{\hh}\vs,(T-\widetilde{T}_{\hh})\us) + \mathcal{N}_{\hh}(T\vs,\widetilde{T}_{\hh}\us) & \mbox{\big[split the middle term and use~\eqref{eq:def:Nh}\big]}          \nonumber\\[0.5em]
      &\qquad = \as((T-\widetilde{T}_{\hh})\vs,(T-\widetilde{T}_{\hh})\us) + \bs(\widetilde{T}_{\hh}\vs,\us) + \mathcal{N}_{\hh}(T\us,\widetilde{T}_{\hh}\vs)           & \nonumber\\
      &\qquad\phantom{=}- \as(\widetilde{T}_{\hh}\vs,\widetilde{T}_{\hh}\us) + \mathcal{N}_{\hh}(T\vs,\widetilde{T}_{\hh}\us)                              &  \mbox{\big[add both sides of~\eqref{eq:discreteSource2} \big]}           \nonumber\\[0.5em]
      &\qquad = \as((T-\widetilde{T}_{\hh})\vs,(T-\widetilde{T}_{\hh})\us) + \big[\bs(\us, \widetilde{T}_{\hh}\vs) - \bsht(\us, \widetilde{T}_{\hh}\vs)\big] +         & \nonumber\\
      &\qquad\phantom{=}
      \big[ \ash(\widetilde{T}_{\hh}\vs,\widetilde{T}_{\hh}\us) - \as(\widetilde{T}_{\hh}\vs,\widetilde{T}_{\hh}\us) \big] +
      \big[ \mathcal{N}_{\hh}(T\us,\widetilde{T}_{\hh}\vs) + \mathcal{N}_{\hh}(T\vs,\widetilde{T}_{\hh}\us) \big]
      \\[0.5em] &\qquad 
      = \sum_{i=1}^{4}\mathsf{R}_i.
    \end{array}
  \end{align}
  Term $\mathsf{R}_1$ is clearly of order  $h^{2t}$, being $\us$ and $\vs$ eigenfunctions (see Remark~\eqref{rm:estimate-regular-load}) :
  \begin{align*}
    \abs{\mathsf{R}_1} 
    = \abs{ \as((T-\widetilde{T}_{\hh})\vs,(T-\widetilde{T}_{\hh})\us)}
    \leq \snorm{(T-\widetilde{T}_{\hh})\vs}{1,\hh}\,\snorm{(T-\widetilde{T}_{\hh})\us}{1,\hh}
    \leq C\hh^{2t}.
  \end{align*}
  To bound term $\mathsf{R}_2$ using \eqref{eq:discretebstab} and \eqref{eq:ctbot}, the definition of $L^2$-orthogonal projection and triangular inequality, we get
  \begin{equation}
\label{eq:R2}
\begin{split}
  \abs{\mathsf{R}_2} & \leq \abs{\bs(\us,\widetilde{T}_{\hh}\vs) - \bsht(\us, \widetilde{T}_{\hh}\vs)} 
  \leq\sum_{\P\in\Th} \abs{\bsP(\us,\widetilde{T}_{\hh}\vs) - \bshtP(\us, \widetilde{T}_{\hh}\vs)}  \\	
  &  \leq\sum_{\P\in\Th} \left( \left|\bsP(\us-\Piz{k}\us,\widetilde{T}_{\hh}\vs)\right| + 
  \left| \SPt\left( (I-\Piz{k})\us, (I-\Piz{k})\widetilde{T}_{\hh}\vs \right)\right| \right) \\
  & \leq\sum_{\P\in\Th} \left( \big|\bsP((I-\Piz{k})\us,(I-\Piz{k})\widetilde{T}_{\hh}\vs)\big| +
  \cttop \norm{(I-\Piz{k})\us}{0} \norm{(I-\Piz{k})\widetilde{T}_{\hh}\vs}{0} 
   \right)\\
  & \leq
  \sum_{\P\in\Th}(1 + \cttop)\norm{(I-\Piz{k})\us}{0,P}\norm{(I-\Piz{k})\widetilde{T}_{\hh}\vs}{0,P}
  \\
  &\leq C \sum_{\P\in\Th}
  \left(
  \norm{(I-\Piz{k})\us}{0,P} \left( 
  \norm{(I-\Piz{k})(T -\widetilde{T}_{\hh})\vs}{0,P} +    \norm{(I-\Piz{k})T\vs}{0,P}  
  \right) 
  \right)
  \end{split}
\end{equation}
  Now, note that
  \begin{align}
  \label{eq:r21}
    \norm{\us-\Piz{k}\us}{0,\P}
    \leq C\hh^{\min(k+1,r+1)}\snorm{\us}{r+1,\P}
    \leq C\hh^{t+1}\snorm{\us}{r+1,\P}.
  \end{align}
  By the continuity of $L^2$-projection with respect the $L^2$-norm we get
  \begin{align}
  \label{eq:r22}
  \norm{(I-\Piz{k})(T -\widetilde{T}_{\hh})\vs}{0,P}
    \leq \norm{(T -\widetilde{T}_{\hh})\vs}{0,P}.
  \end{align}
  Moreover polynomial approximation estimate \eqref{eq:approx:pi} yields
  \begin{align}
  \label{eq:r23}
  \norm{(I-\Piz{k})T\vs}{0,P}
    \leq C\hh^{t+1}\snorm{T\vs}{1+r,\P}.
  \end{align}
  Hence collecting \eqref{eq:r21}, \eqref{eq:r22}, \eqref{eq:r23} in \eqref{eq:R2}, 
  and using the $L^2$ \emph{a priori}  error estimate in Remark~\eqref{rm:estimate-regular-load} 
  and the stability estimate~\eqref{eq:regularity},  
  we obtain
  \begin{align*}
  \abs{\bs(\us,\widetilde{T}_{\hh}\vs) - \bsht(\us, \widetilde{T}_{\hh}\vs)} 
  \leq C\hh^{2t+2}\snorm{\us}{r+1} \, \snorm{T\vs}{r+1}
  \leq C\hh^{2t+2} \norm{\us}{0} \, \norm{\vs}{0}=C \hh^{2t+2}.
  \end{align*}
  To estimate term $\mathsf{R}_3$, we first consider the developments:
  \begin{align*}
    \asP(\widetilde{T}_{\hh}\us,\widetilde{T}_{\hh}\vs) - 
    \ashP(\widetilde{T}_{\hh}\us,\widetilde{T}_{\hh}\vs) 
    & = \asP(\widetilde{T}_{\hh}\us - (T\us)_{\pi},\widetilde{T}_{\hh}\vs) + 
    \ashP\big( (T\us)_{\pi},\widetilde{T}_{\hh}\vs \big) - \ashP(\widetilde{T}_{\hh}\us,\widetilde{T}_{\hh}\vs) \\
    & = \asP(\widetilde{T}_{\hh}\us - (T\us)_{\pi},\widetilde{T}_{\hh}\vs - (T\vs)_{\pi}) + \asP_h(\widetilde{T}_{\hh}\us - (T\us)_{\pi}, (T\vs)_{\pi})          \\ 
    & \phantom{=} + \ashP\big( (T\us)_{\pi} - \widetilde{T}_{\hh}\us, \widetilde{T}_{\hh}\vs \big)                                                \\
    & = \asP(\widetilde{T}_{\hh}\us - (T\us)_{\pi}, \widetilde{T}_{\hh}\vs - (T\vs)_{\pi} ) + \ashP( \widetilde{T}_{\hh}\us - (T\us)_{\pi}, (T\vs)_{\pi} - \widetilde{T}_{\hh}\vs ), 
  \end{align*}
  where we make use of the consistency
  condition~\eqref{eq:k-consistency} to introduce $(T\us)_{\pi}$ and
  $(T\vs)_{\pi}$, the elemental polynomial approximations of $T\us$
  and $T\vs$ that exist in accordance with~\eqref{eq:approx:pi}.
  The terms on the right-hand side of the previous equation are
  similar and we can estimate both as follows:
  \begin{equation*}
    \abs{ \asP( \widetilde{T}_{\hh}\us - (T\us)_{\pi}, \widetilde{T}_{\hh}\vs - (T\vs)_{\pi} ) } 
    \le \Big( \snorm{ (\widetilde{T}_{\hh}-T)u }{1,\P} + \snorm{ T\us - (T\us)_{\pi} }{1,\P} \Big) 
    \Big( \snorm{ (\widetilde{T}_{\hh} - T)\vs }{1,\P} + \snorm{ T\vs - (T\vs)_{\pi} }{1,\P} \Big),
  \end{equation*}
  and, using the \emph{a priori} error estimate in the broken
  $\HONE$-norm in Remark~\eqref{rm:estimate-regular-load}, 
  the local approximation
  properties of the VEM space by polynomials~\eqref{eq:approx:pi}, 
  and the stability estimate~\eqref{eq:regularity}, it holds
  \begin{align*}
    \abs{\mathsf{R}_3} 
    &\leq\sumP\Big| \asP(\widetilde{T}_{\hh}\us, \widetilde{T}_{\hh}\vs) - \ashP( \widetilde{T}_{\hh}\us,\widetilde{T}_{\hh}\vs ) \Big| \\
    &\leq C
    \Big( \snorm{(\widetilde{T}_{\hh}-T)\us}{1,\hh} + \snorm{T\us-(T\us)_{\pi}}{1,\hh} \Big)
    \Big( \snorm{(\widetilde{T}_{\hh}-T)\vs}{1,\hh} + \snorm{T\vs-(T\vs)_{\pi}}{1,\hh} \Big)  \\
    &  \leq C\hh^{2t}  \snorm{\T\us}{1+r} \snorm{\T\vs}{1+r}  \le C \hh^{2t}.
  \end{align*}  
  Finally, for term $\mathsf{R}_4$ we apply the triangle inequality, 
  Lemma~\ref{lemma:Nh:def}, and the stability estimate~\eqref{eq:regularity} to obtain:
  \begin{align*}
    \abs{\mathsf{R}_4} 
    \leq \abs{\calNh(T\us,\widetilde{T}_{\hh}\vs)} + \abs{\calNh(T\vs,\widetilde{T}_{\hh}\us)} 
    \leq C\hh^{2t}\snorm{T\vs}{1+r}\snorm{T\us}{1+r} \leq C \hh^{2t}.
  \end{align*}
  The assertion of the theorem follows from the above estimates.
\end{proof}

The proof of the optimal error estimate for the eigenfunctions in the 
discrete energy norm follows along the same line as the one for the 
nonconforming finite element method.  
We briefly report it here for the sake of completeness.
\begin{theorem}
  With the same notation as in Theorem~\ref{th:convl2}, we have
  \begin{align*}
    \snorm{\us-\usht}{1,\hh}\le C\hh^{t}, 
  \end{align*}
  where $t=\min(k,r)$, $k$ being the order of the method and $r$ the
  regularity index of $\us$.
\end{theorem}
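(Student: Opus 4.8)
The plan is to follow the classical nonconforming-FEM route: split the error through the virtual interpolant and control the discrete part by coercivity of $\ash(\cdot,\cdot)$. First I would write
\[
\snorm{\us-\usht}{1,\hh}\le\snorm{\us-\usI}{1,\hh}+\snorm{\usI-\usht}{1,\hh},
\]
where $\usI\in\Vhk$ is the interpolant of the eigenfunction $\us$. The first term is a pure approximation error, bounded by $C\hh^{t}\snorm{\us}{1+r}\le C\hh^{t}$ thanks to \eqref{eq:interp:00} summed over the elements (taking $s=1+r$, so that the exponent is $\min(k+1,1+r)-1=t$) and the normalization $\norm{\us}{0}=1$. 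Everything then reduces to proving $\snorm{\usI-\usht}{1,\hh}\le C\hh^{t}$.

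Set $\wsh:=\usI-\usht\in\Vhk$. Using the stability bound \eqref{eq:stability} and the fact that $\snorm{\cdot}{1,\hh}$ is a norm on $\Vhk$, I would start from $\alpha_*\snorm{\wsh}{1,\hh}^{2}\le\ash(\wsh,\wsh)=\ash(\usI,\wsh)-\ash(\usht,\wsh)$. For the discrete term I use the discrete eigenproblem \eqref{eq:discreteEigPbm2}, namely $\ash(\usht,\wsh)=\widetilde{\lambda}_{\hh}\bsht(\usht,\wsh)$ when the eigenvalue is simple; in the general case $\usht$ is a linear combination of the discrete eigenfunctions and this identity holds up to the correction $\sum_j c_j(\widetilde{\lambda}_{j,\hh}-\lambda)\bsht(\wsht^{(j)},\wsh)$, which is $O(\hh^{2t})$ by Theorem~\ref{theorem:double:convergence:rate}. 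For $\ash(\usI,\wsh)$ I introduce an elementwise polynomial approximation $\usp$ of $\us$ and exploit the $k$-consistency \eqref{eq:k-consistency} together with symmetry to write $\ashP(\usp,\wsh)=\asP(\usp,\wsh)$, obtaining
\[
\ash(\usI,\wsh)=\sum_{\P\in\Th}\ashP(\usI-\usp,\wsh)+\as(\usp,\wsh).
\]
Finally, since $\us$ is an eigenfunction, the definition \eqref{eq:def:Nh} (with datum $\lambda\us$) yields the key identity $\as(\us,\wsh)=\lambda\bs(\us,\wsh)+\calNh(\us,\wsh)$, which I use after writing $\as(\usp,\wsh)=\as(\us,\wsh)-\as(\us-\usp,\wsh)$.

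Collecting these substitutions, $\ash(\wsh,\wsh)$ splits into six terms, which I would bound as follows. The two polynomial-consistency contributions $\sum_{\P}\ashP(\usI-\usp,\wsh)$ and $\as(\us-\usp,\wsh)$ are controlled, via continuity of $\ash$ and Cauchy--Schwarz, by $C\hh^{t}\snorm{\wsh}{1,\hh}$, using $\snorm{\usI-\usp}{1,\hh}\le\snorm{\usI-\us}{1,\hh}+\snorm{\us-\usp}{1,\hh}\le C\hh^{t}$ from \eqref{eq:interp:00} and \eqref{eq:approx:pi}. The mass term $\lambda\bs(\us,\wsh)-\widetilde{\lambda}_{\hh}\bsht(\usht,\wsh)$ I decompose into $\lambda\bs(\us-\usht,\wsh)$, bounded by $C\hh^{t+1}\snorm{\wsh}{1,\hh}$ via the $L^2$-estimate of Theorem~\ref{th:convl2} and Poincar\'e--Friedrichs, a $\bsht$-consistency term $\lambda[\bs(\usht,\wsh)-\bsht(\usht,\wsh)]$ of order $\hh^{t+2}$ by the computation in \eqref{eq:bs-bsht}, and the eigenvalue-gap term $(\lambda-\widetilde{\lambda}_{\hh})\bsht(\usht,\wsh)=O(\hh^{2t})\,\snorm{\wsh}{1,\hh}$. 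The last term is the conformity error $\calNh(\us,\wsh)\le C\hh^{t}\snorm{\us}{1+r}\snorm{\wsh}{1,\hh}$ by Lemma~\ref{le:Nh}. Dividing through by $\snorm{\wsh}{1,\hh}$ leaves $\snorm{\wsh}{1,\hh}\le C\hh^{t}$, which combined with the first paragraph proves the claim.

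The dominant contribution, and the main obstacle, is the nonconformity error $\calNh(\us,\wsh)$: unlike in the conforming case it does not vanish, and Lemma~\ref{le:Nh} shows it is exactly of order $\hh^{t}$, which is what caps the rate. The remaining work is bookkeeping, namely checking that every other term is genuinely of higher order so that the already-established $L^2$-rate $\hh^{t+1}$ of Theorem~\ref{th:convl2} and the double order $\hh^{2t}$ for the eigenvalues of Theorem~\ref{theorem:double:convergence:rate} feed in correctly, and handling the linear combination $\usht$ in the multiple-eigenvalue case, where no single discrete eigenrelation is available and one must absorb the spread of the $\widetilde{\lambda}_{j,\hh}$ into an $O(\hh^{2t})$ remainder.
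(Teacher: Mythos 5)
Your proof is correct, but it follows a genuinely different route from the paper's. The paper argues at the level of the solution operators: writing $\us=\lambda T\us$ and $\usht=\widetilde{\lambda}_{\hh}\widetilde{T}_{\hh}\usht$, it decomposes
\begin{equation*}
\us-\usht=(\lambda-\widetilde{\lambda}_{\hh})T\us+\widetilde{\lambda}_{\hh}(T-\widetilde{T}_{\hh})\us+\widetilde{\lambda}_{\hh}\widetilde{T}_{\hh}(\us-\usht),
\end{equation*}
and bounds the three terms by, respectively, the double-order eigenvalue estimate of Theorem~\ref{theorem:double:convergence:rate}, the $\HONE$ a-priori estimate for the source problem with eigenfunction datum (Remark~\ref{rm:estimate-regular-load}), and, for the last term, coercivity of $\ash$ plus the definition of $\widetilde{T}_{\hh}$ and the $\LTWO$ bound of Theorem~\ref{th:convl2}, which give $\snorm{\widetilde{T}_{\hh}(\us-\usht)}{1,\hh}\le C\norm{\us-\usht}{0}\le C\hh^{t+1}$. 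In that argument the conformity error never appears explicitly: it is hidden inside the already-proven estimate for $\snorm{(T-\widetilde{T}_{\hh})\us}{1,\hh}$ packaged in Theorem~\ref{thm:aprioriestimate}. Your argument is instead the classical second-Strang-lemma energy route: split through the interpolant, apply coercivity to $\usI-\usht$, and re-derive inline the polynomial consistency, the $\bsht$-consistency computation of \eqref{eq:bs-bsht}, and the nonconformity bound of Lemma~\ref{le:Nh}. What the paper's route buys is brevity — three lines reusing established machinery. What your route buys is transparency about where the rate $\hh^{t}$ actually comes from (the term $\calNh(\us,\wsh)$, as you correctly identify as the bottleneck), and, notably, an honest treatment of the multiple-eigenvalue case: the identity $\usht=\widetilde{\lambda}_{\hh}\widetilde{T}_{\hh}\usht$ used by the paper is exact only when $\lambda$ is simple, and your correction term $\sum_j c_j(\widetilde{\lambda}_{j,\hh}-\lambda)\bsht(\wsht^{(j)},\wsh)$, absorbed at order $\hh^{2t}$, is precisely what repairs it — a point the paper glosses over. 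Both proofs ultimately rest on the same two external ingredients, Theorem~\ref{th:convl2} and Theorem~\ref{theorem:double:convergence:rate}, so neither is more general in substance; your bookkeeping (e.g., bounding $\norm{(I-\Piz{k})\usht}{0,\P}$ by inserting $\us$ and invoking the $\LTWO$ rate to get the claimed $\hh^{t+2}$) all checks out at the orders you state.
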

\begin{proof}
  \begin{equation*}
    \us - \usht = \lambda T\us - \widetilde{\lambda}_{\hh} \widetilde{T}_{\hh}\usht 
    = (\lambda - \widetilde{\lambda}_{\hh})T\us + \widetilde{\lambda}_{\hh}(T - \widetilde{T}_{\hh})\us + \widetilde{\lambda}_{\hh} \widetilde{T}_{\hh}(\us - \usht),
  \end{equation*}
  then 
  \begin{align*}
    \snorm{\us -\usht}{1,h}
    \leq \abs{\lambda - \lambda_{\hh}} \, \snorm{T\us}{1,\hh} + \lambda_{\hh}\snorm{(T-\widetilde{T}_{\hh})\us}{1,\hh} 
    + \widetilde{\lambda}_{\hh} \snorm{\widetilde{T}_{\hh}(\us - \usht)}{1,\hh}.
  \end{align*}
  The first term at the right-hand side of the previous equation is of
  order $\hh^{2t}$, while the second one is of order $\hh^{t}$.
  Finally, for the last term, using \eqref{eq:stability}, the continuity of the operator $\widetilde{T}_{\hh}$, and Theorem \ref{th:convl2}, we obtain
  \begin{align*}
    \snorm{ \widetilde{T}_{\hh}(\us-\usht)}{1,\hh}^2 
    &\leq \dfrac{1}{\alpha_*}\ash(\widetilde{T}_{\hh}(\us - \usht),\widetilde{T}_{\hh}(\us - \usht)) \\
    &= \dfrac{1}{\alpha_*}\bsht(\us-\usht,\widetilde{T}_{\hh}(\us - \usht)) 
    \le C\norm{\us-\usht}{0}^2 
    \le C \hh^{2t+2}.
  \end{align*}
\end{proof}

\section{Numerical experiments}
\label{sc:tests}

\begin{figure}
  \centering
  \begin{tabular}{cccc}
    \begin{overpic}[scale=0.2]{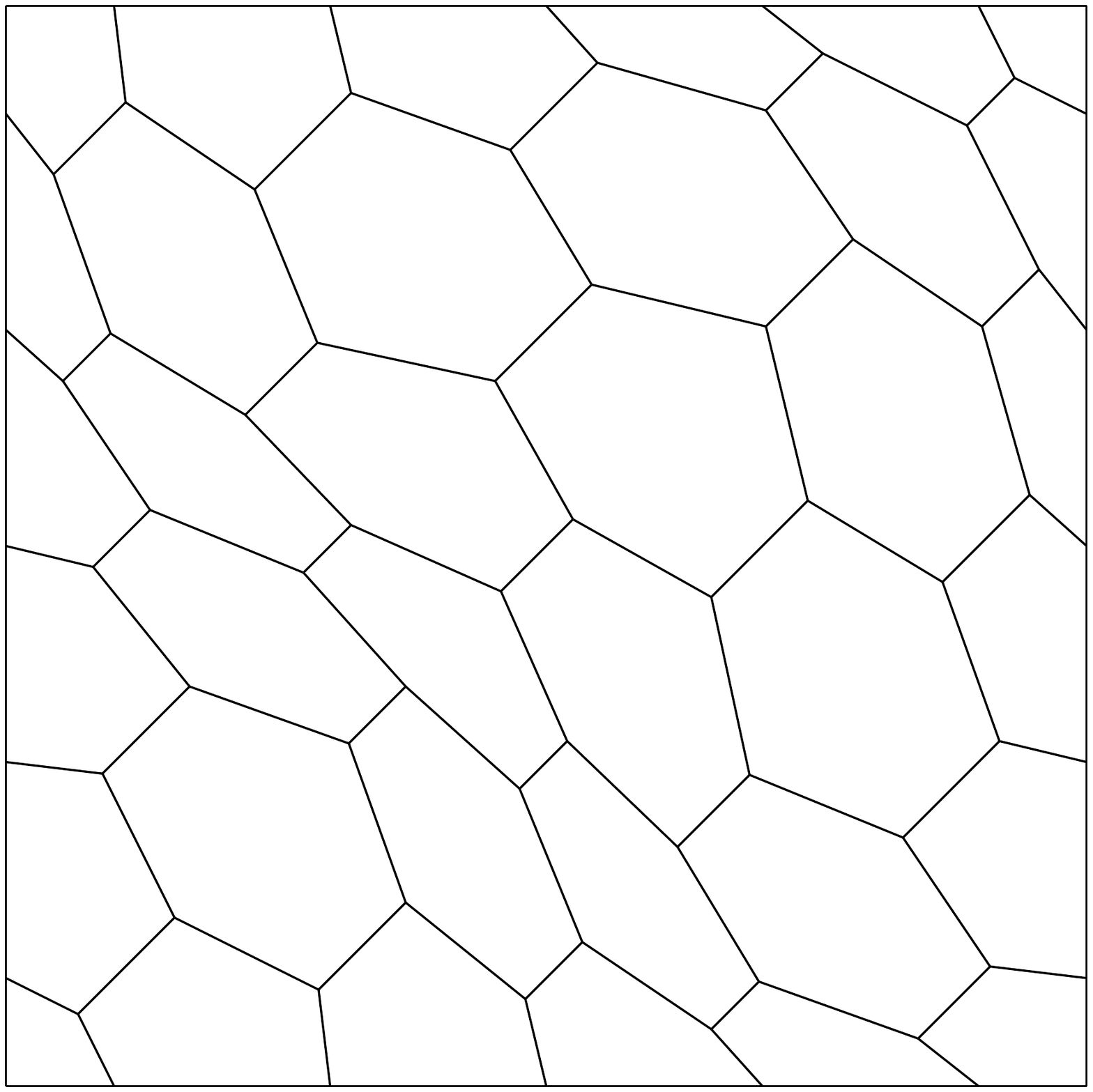}
    \end{overpic} 
    &
    \begin{overpic}[scale=0.2]{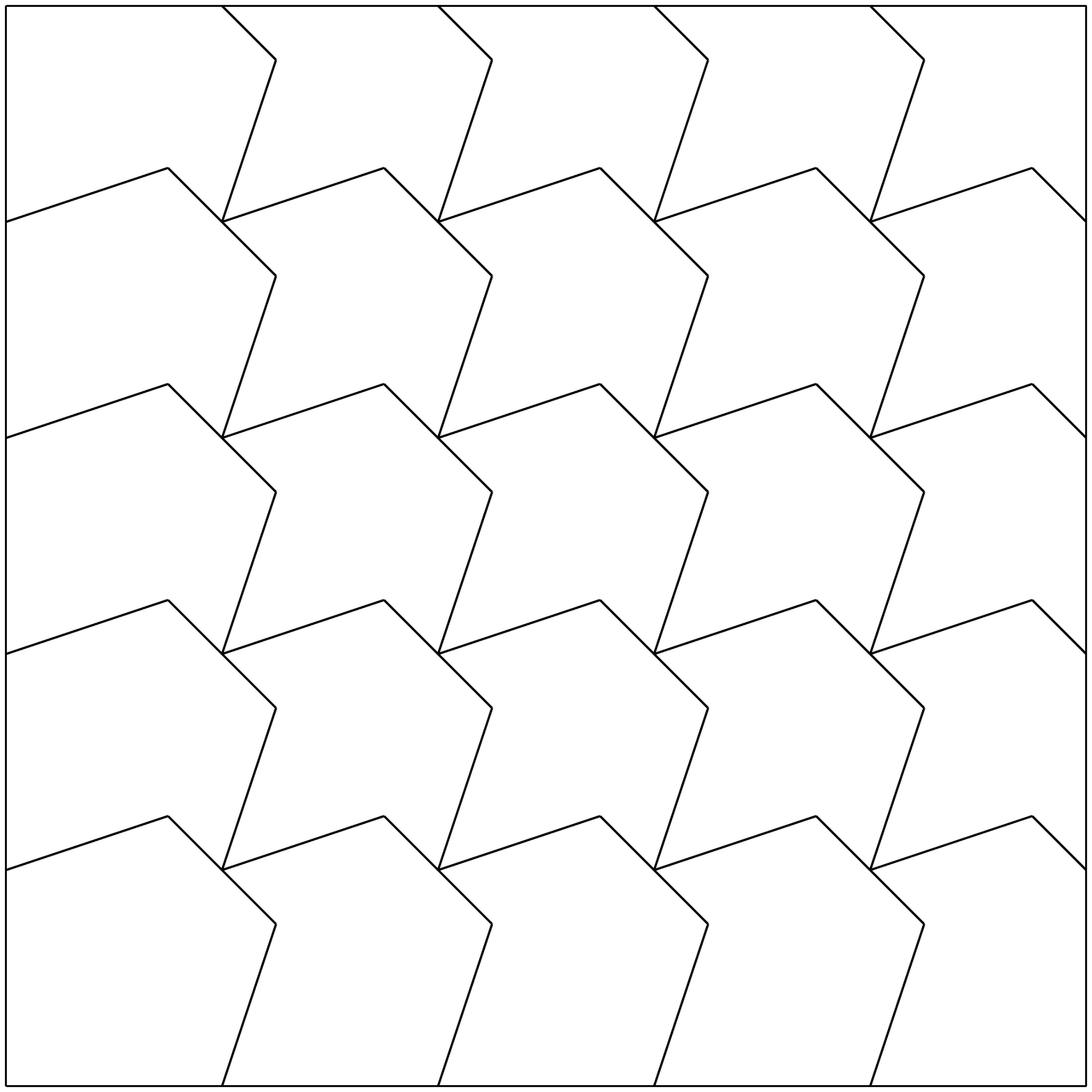}
    \end{overpic}
    &
    \begin{overpic}[scale=0.2]{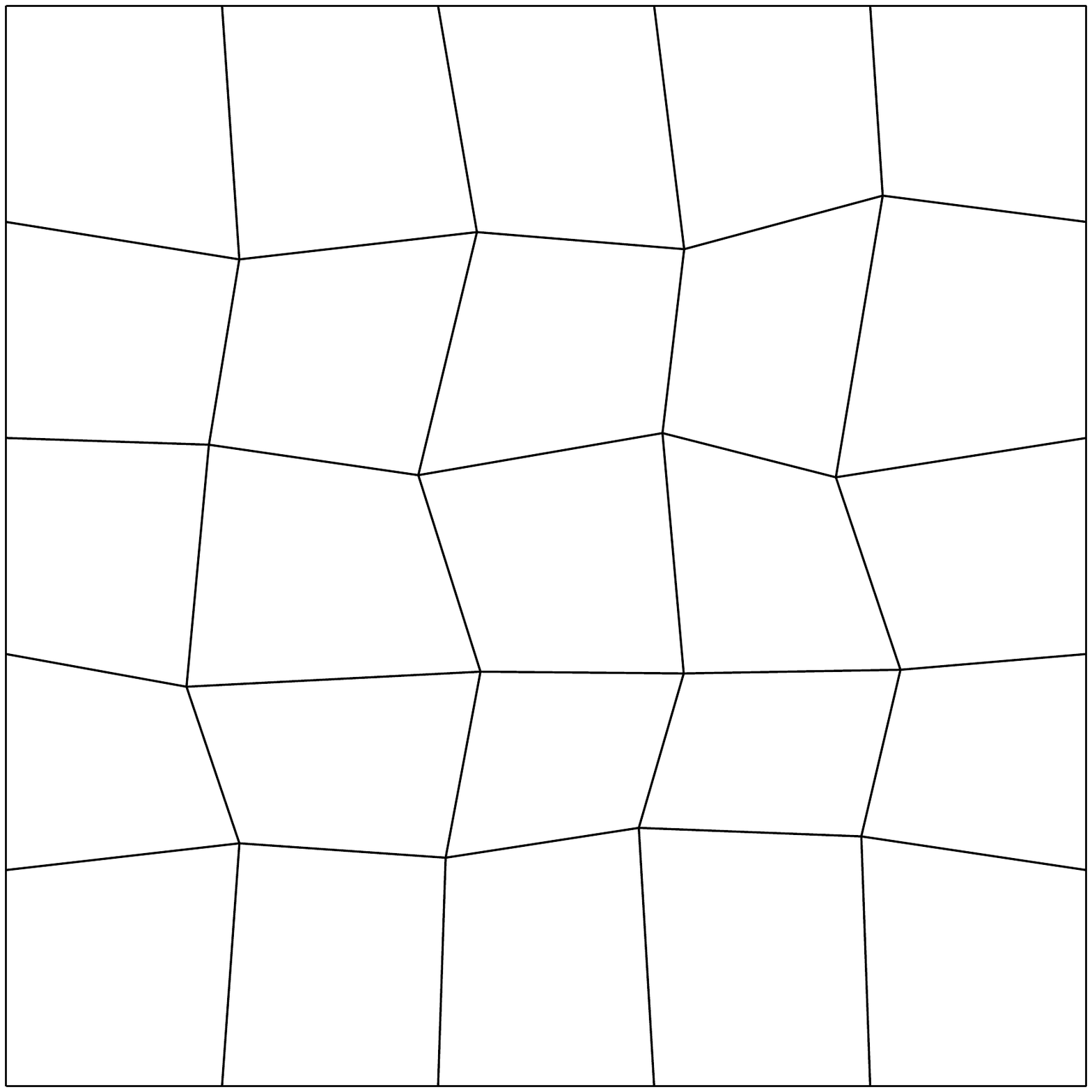}
    \end{overpic} 
    &
    \begin{overpic}[scale=0.2]{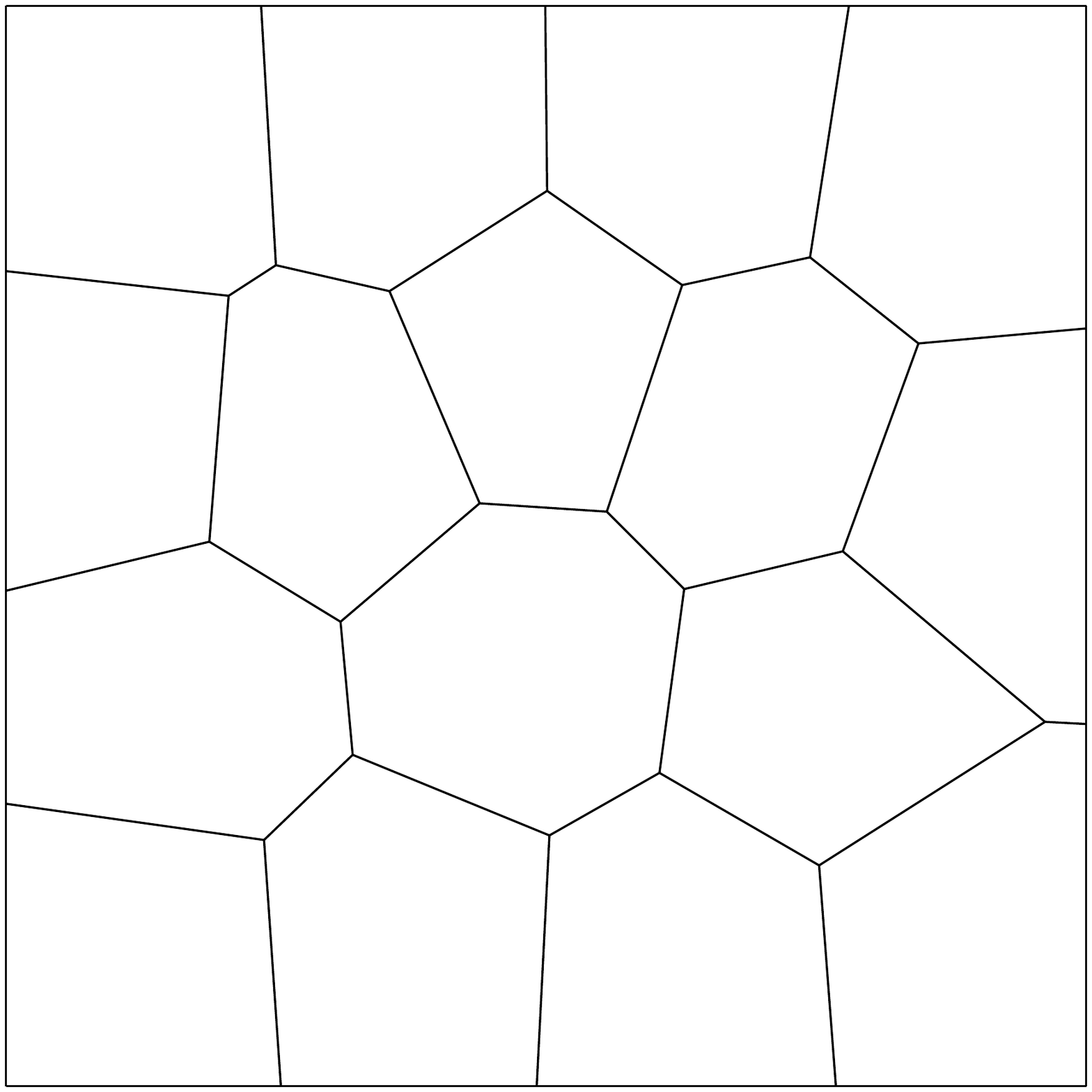}
    \end{overpic}
    \\[0.25em]
    \begin{overpic}[scale=0.2]{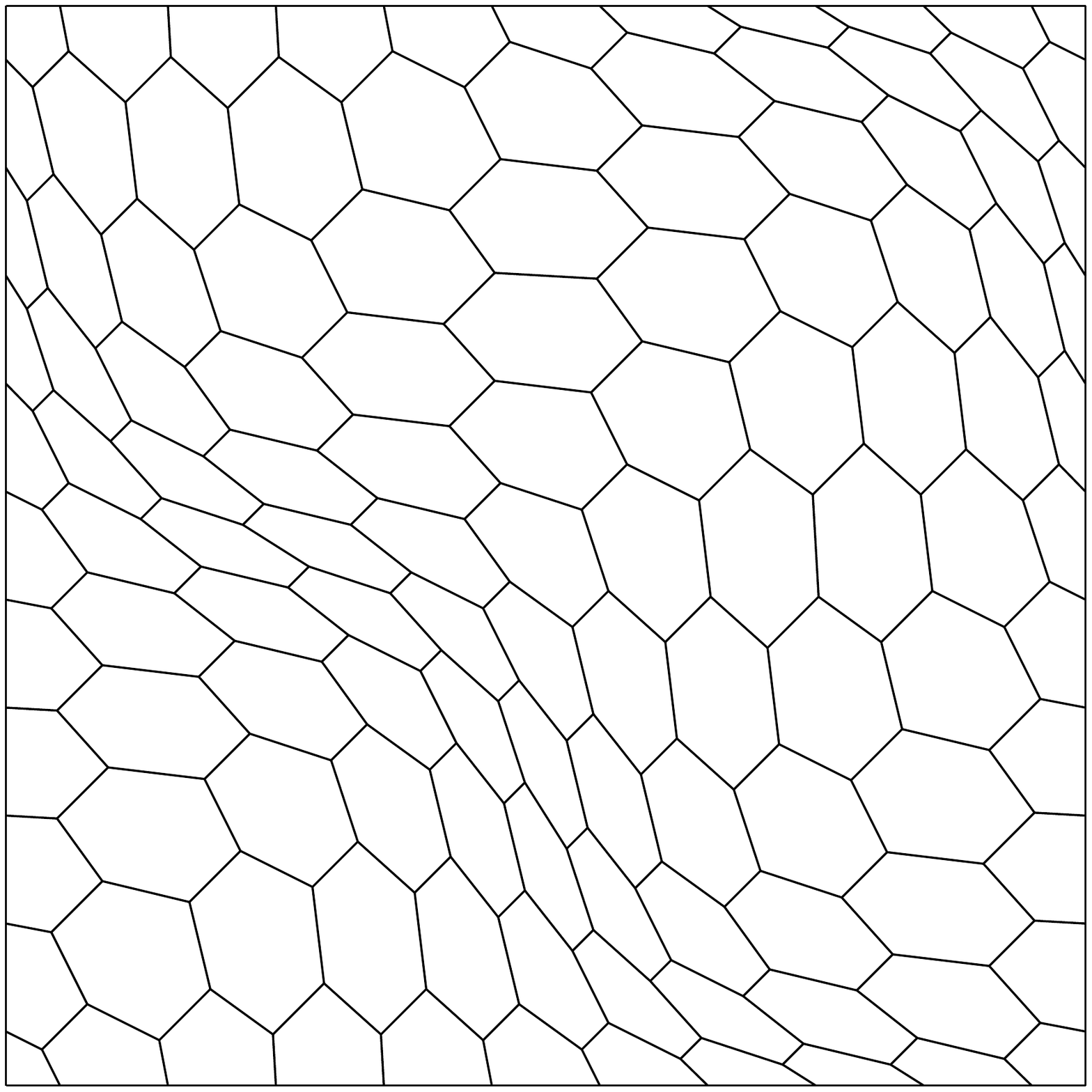}
    \end{overpic} 
    &
    \begin{overpic}[scale=0.2]{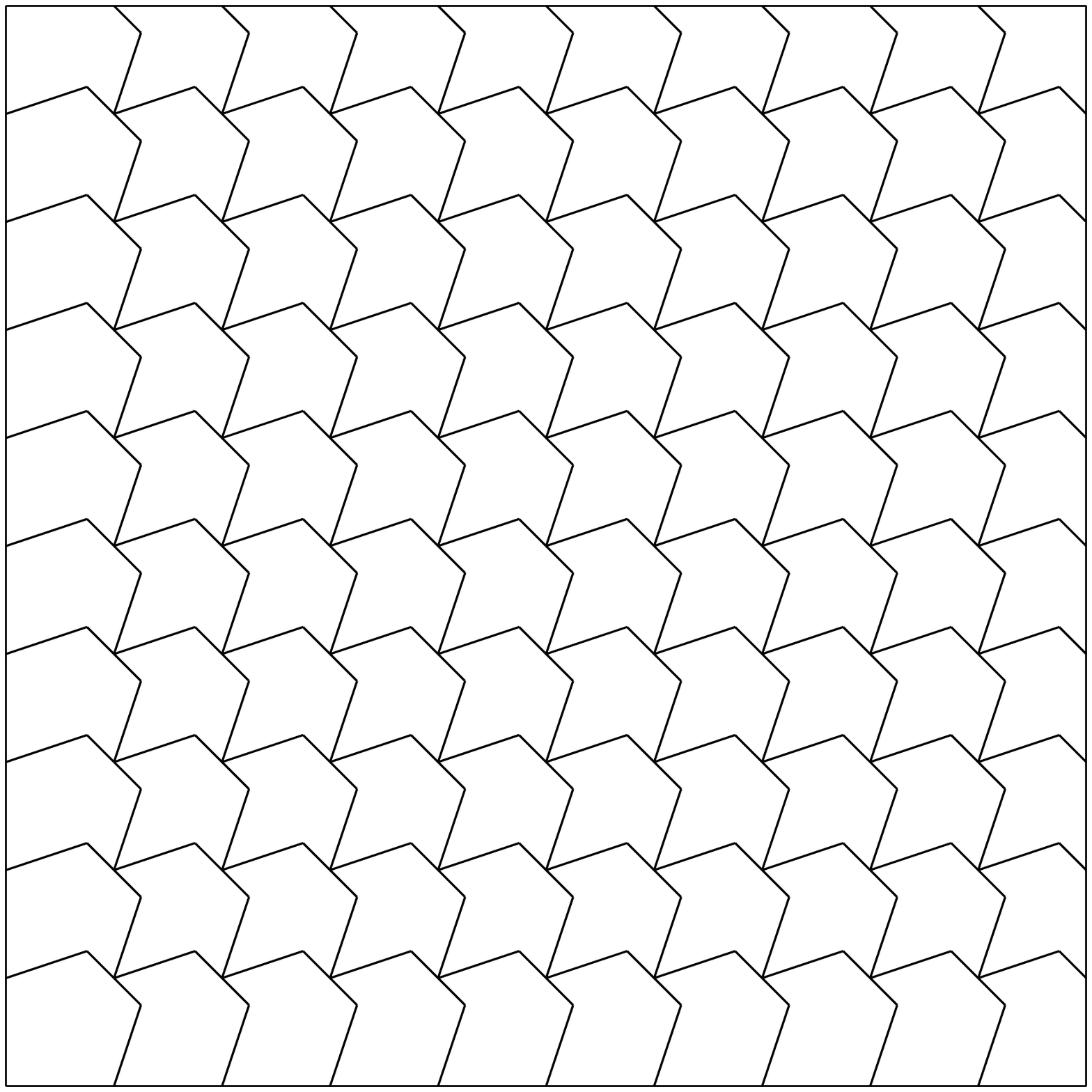}
    \end{overpic}
    &
    \begin{overpic}[scale=0.2]{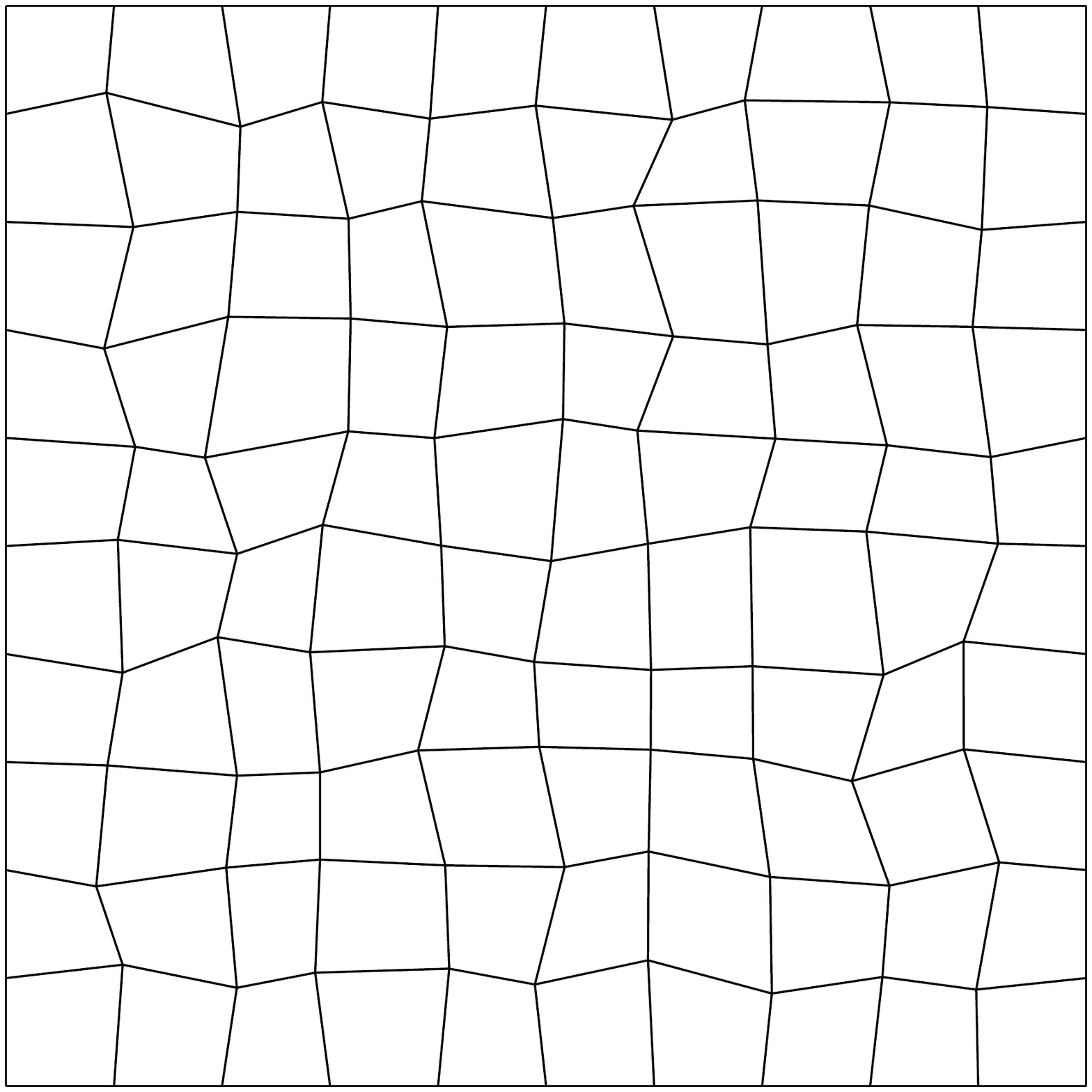}
    \end{overpic} 
    &
    \begin{overpic}[scale=0.2]{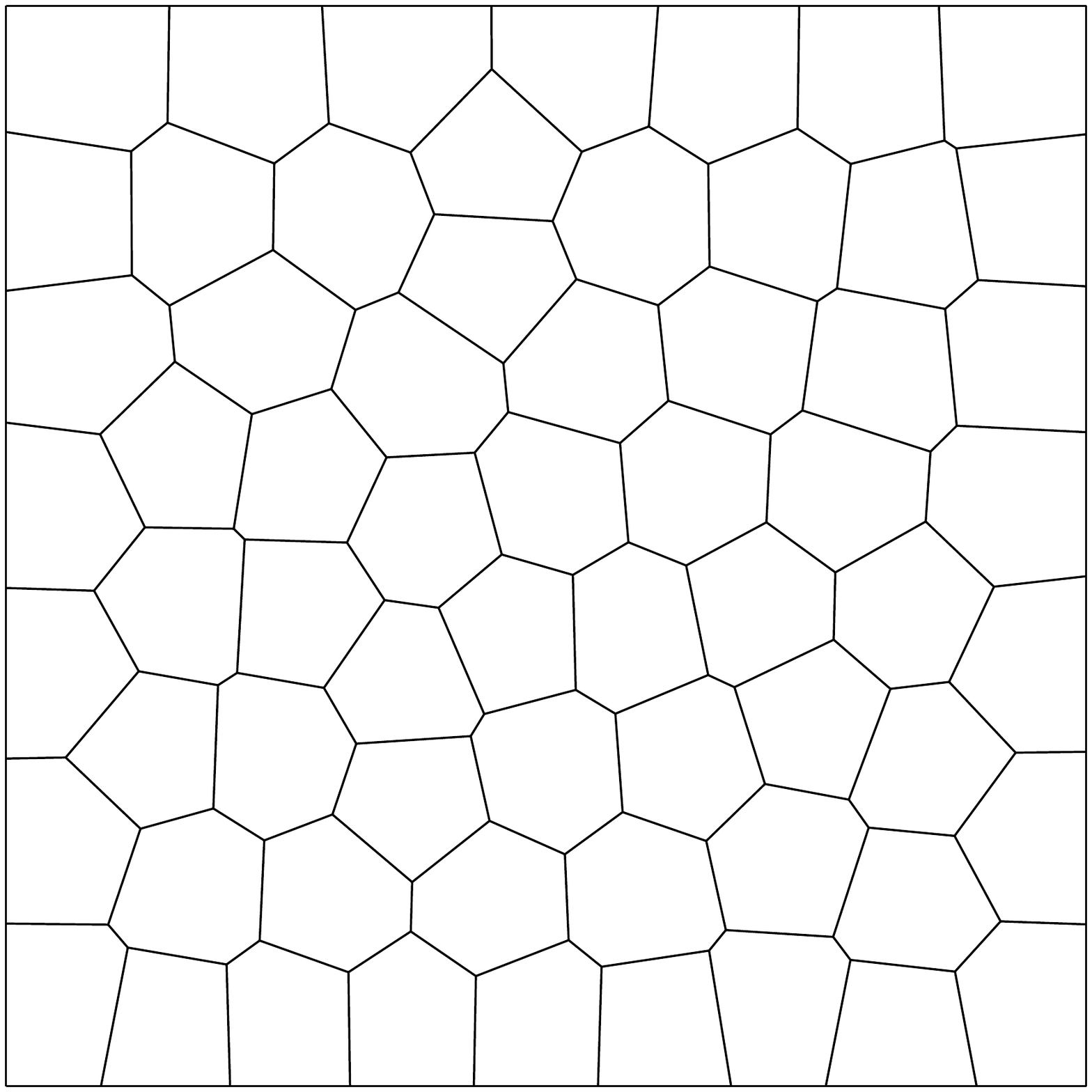}
    \end{overpic}
    \\[-0.25em]
    \textit{Mesh}~1 & \textit{Mesh}~2 & \textit{Mesh}~3 & \textit{Mesh}~4
  \end{tabular}
  \caption{Base meshes (top row) and first refined meshes (bottom row) of the 
    following mesh families from left to right:  mainly hexagonal mesh; nonconvex
    octagonal mesh; randomized quadrilateral mesh; voronoi mesh.}
  \label{fig:Meshes}
\end{figure}
In this section, we aim to confirm the optimal convergence rate of the
numerical approximation of the eigenvalue problem~\eqref{eq:eigPbm}
predicted by Theorem~\ref{theorem:double:convergence:rate} for the
nonconforming virtual element method.
In particular, we present the performance of the nonconforming VEM 
applied to
the eigenvalue problem on a two-dimensional square domain (Test
Case~1) and on the L-shaped domain (Test Case~2).
The convergence of the numerical approximation is shown through the
relative error quantity
\begin{align}
  \epsilon_{\hh,\lambda} := \frac{\abs{\lambda-\lambda_{\hh}}}{\lambda},
\end{align}
where $\lambda$ denotes an eigenvalue of the continuous problem and $\lambda_{\hh}$ its virtual
element approximation.
For Test Case~1, we also compare the error curves for the
nonconforming and the conforming VEM of
Reference~\cite{Gardini-Vacca:2017}.
For both test cases, we use the scalar stabilization for the bilinear
form $\asP(\cdot,\cdot)$ and $\bsP(\cdot,\cdot)$, which reads as follows:
\begin{align*}
  S^{\P}(\vsh,\wsh) &= \sigma_{\P}\vvh^T\wvh,\\
  \widetilde{S}^{\P}(\vsh,\wsh) &= \tau_{\P}\hh^{2}\vvh^T\wvh.
\end{align*}
where $\vvh$, $\wvh$ denote the vector containing the values of the local 
DoFs associated to $\vsh$, $\wsh\in\Vhk(\P)$ and the stability parameters $\sigma_{\P}$ and $\tau_{\P}$ are two
positive constants independent of $\hh$.
In the numerical tests, when $k=1$, constant $\sigma_{\P}$ is the mean
value of the eigenvalues of the matrix stemming from the consistency
part of the local bilinear form $\as^{\P}$, i.e.,
$\as^{\P}(\Pi^{\nabla,\P}_1\cdot,\Pi^{\nabla,\P}_1\cdot)$.
For $k=2,3$, we set $\sigma_{\P}$ to the maximum eigenvalue of
$\as^{\P}(\Pi^{\nabla,\P}_k\cdot,\Pi^{\nabla,\P}_k\cdot)$.
Likewise, when $k=1$, constant $\tau_{\P}$ is set to the mean
value of the eigenvalues of the matrix stemming from the consistency
part of the local bilinear form $\frac{1}{h^2}\bsP(\cdot,\cdot)$, i.e.,
$\frac{1}{h^2}\bsP(\Pi^{0,\P}_1\cdot,\Pi^{0,\P}_1\cdot)$. 
For $k=2,3$, we set $\tau_{\P}$ to the maximum eigenvalue of
$\frac{1}{h^2}\bsP(\Pi^{0,\P}_k\cdot,\Pi^{0,\P}_k\cdot)$.

\subsection{Test~1.}

\begin{figure}
  \centering
  \begin{tabular}{ccc}
    \begin{overpic}[scale=0.325]{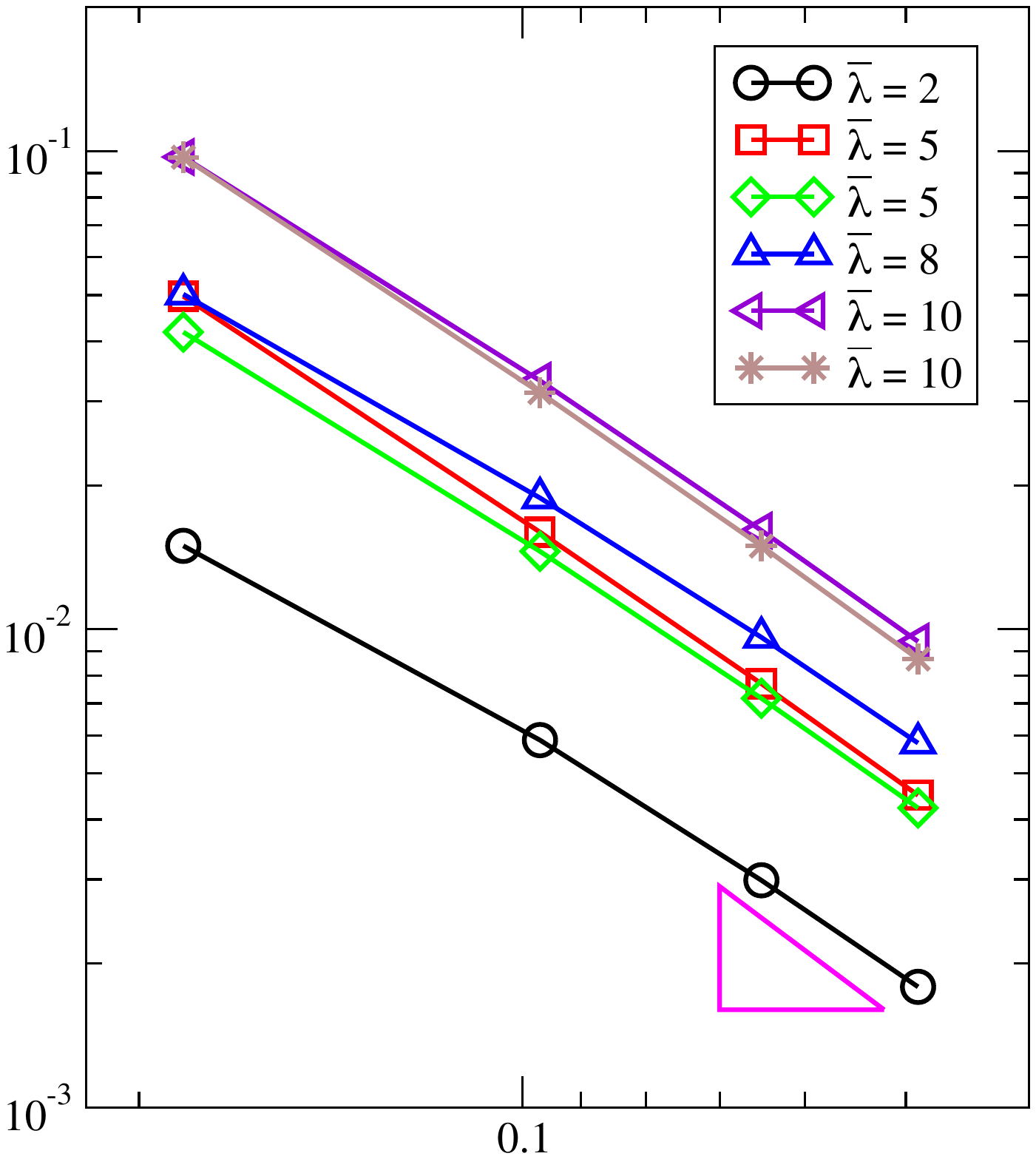}
      \put(-5,14){\begin{sideways}\textbf{Relative approximation error}\end{sideways}}
      \put(32,-5) {\textbf{Mesh size $\mathbf{\hh}$}}
      \put(56,18){\textbf{2}}
    \end{overpic} 
    &
    \begin{overpic}[scale=0.325]{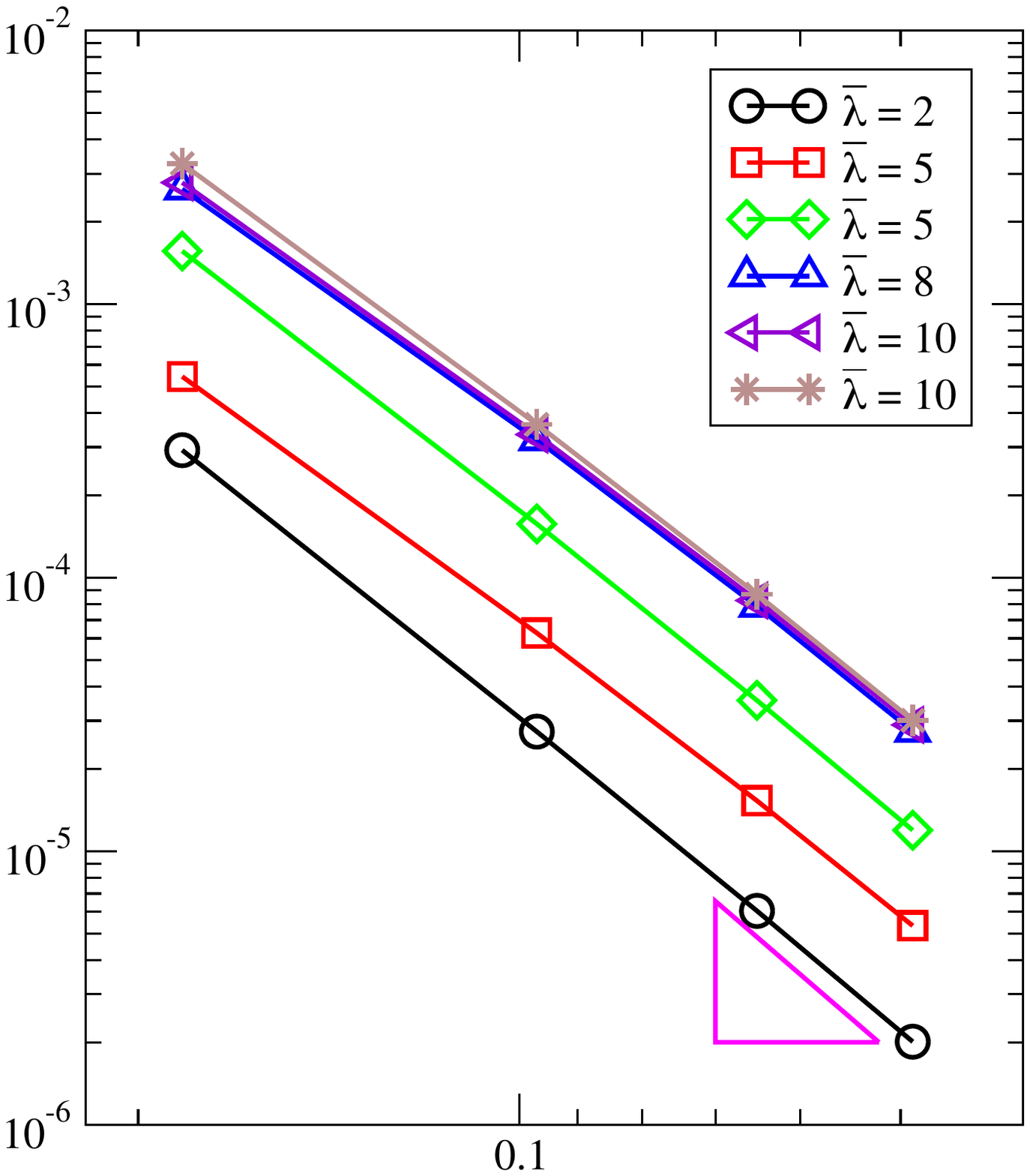}
      \put(32,-5) {\textbf{Mesh size $\mathbf{h}$}}
      \put(56,18){\textbf{4}}
    \end{overpic}
    &
    \begin{overpic}[scale=0.325]{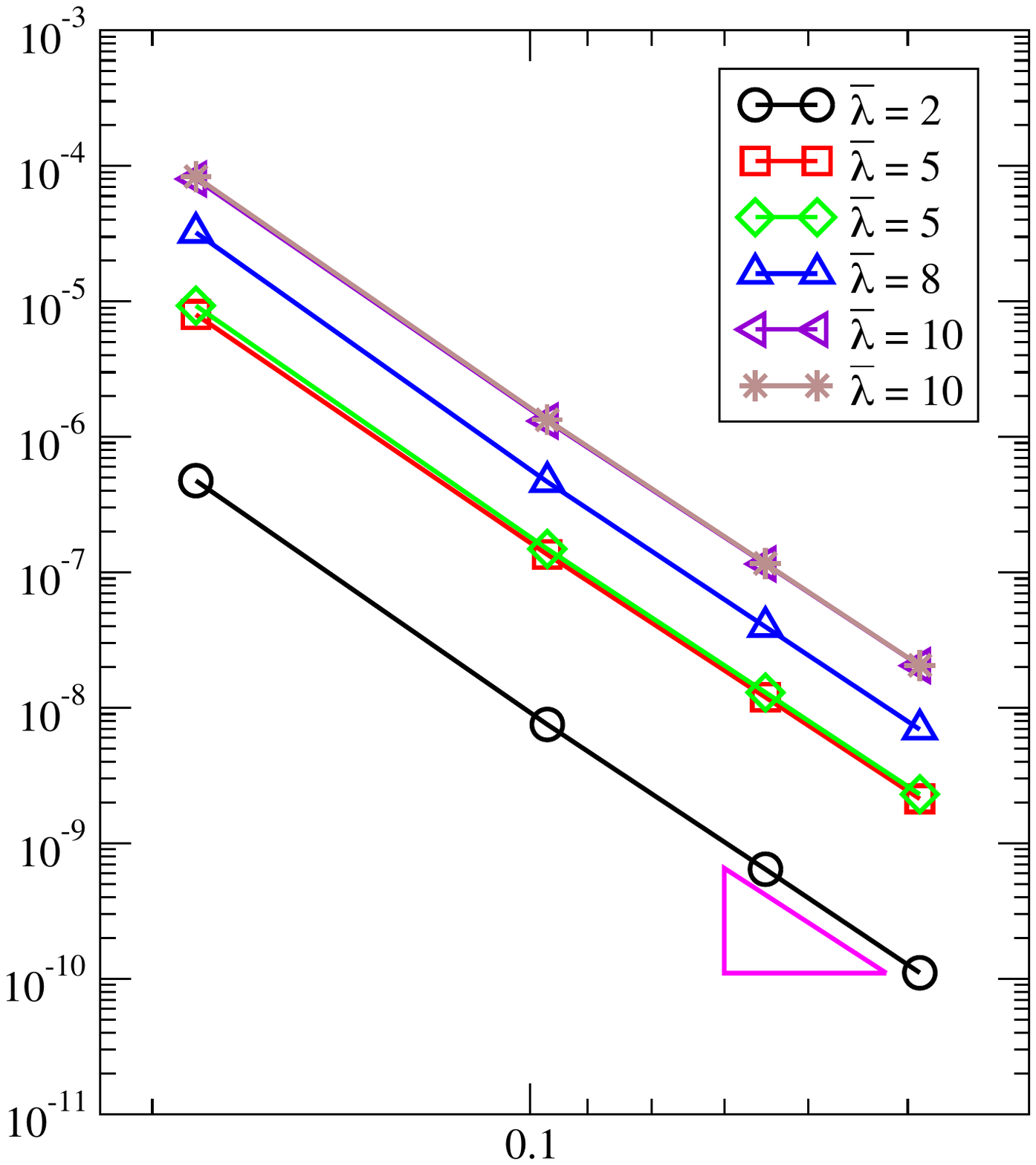}
      \put(32,-5) {\textbf{Mesh size $\mathbf{h}$}}
      \put(56,22){\textbf{6}}
    \end{overpic} 
  \end{tabular}
  \vspace{0.5cm}
  \caption{Test Case~1: Convergence plots for the approximation of the
	first six eigenvalues $\lambda=\pi^2\overline{\lambda}$ using the mainly hexagonal mesh and the
    nonconforming spaces: $\VS{\hh}_{1}$ (left panel); $\VS{\hh}_{2}$
    (mid panel); $\VS{\hh}_{3}$ (right panel). The generalized
    eigenvalue problem uses the nonstabilized bilinear form $\bsh(\cdot,\cdot)$. }
  \label{fig:hexa:rates}
\end{figure}
\begin{figure}
  \centering
  \begin{tabular}{ccc}
    \begin{overpic}[scale=0.325]{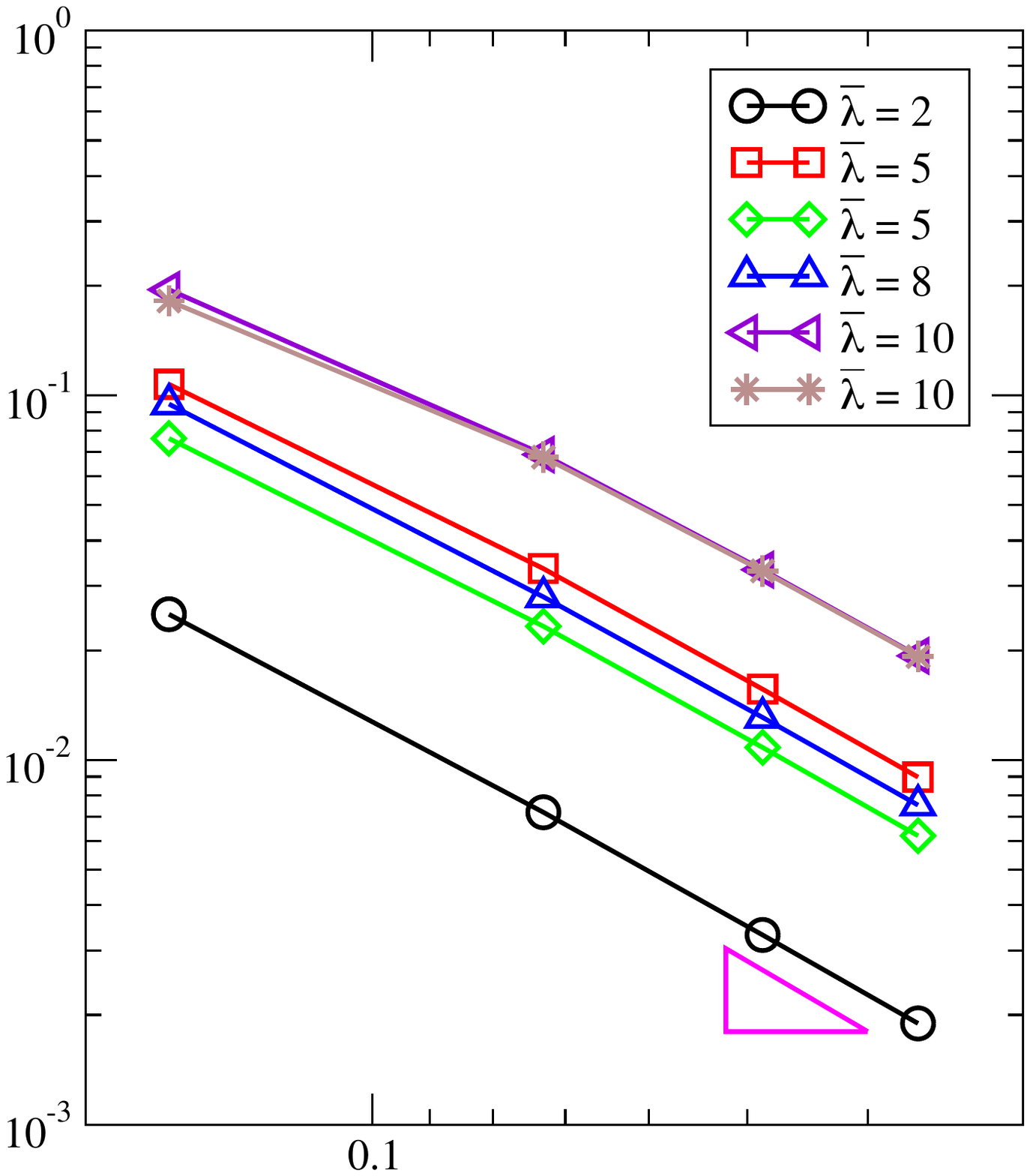}
      \put(-5,14){\begin{sideways}\textbf{Relative approximation error}\end{sideways}}
      \put(32,-5) {\textbf{Mesh size $\mathbf{h}$}}
      \put(57,18){\textbf{2}}
    \end{overpic} 
    &
    \begin{overpic}[scale=0.325]{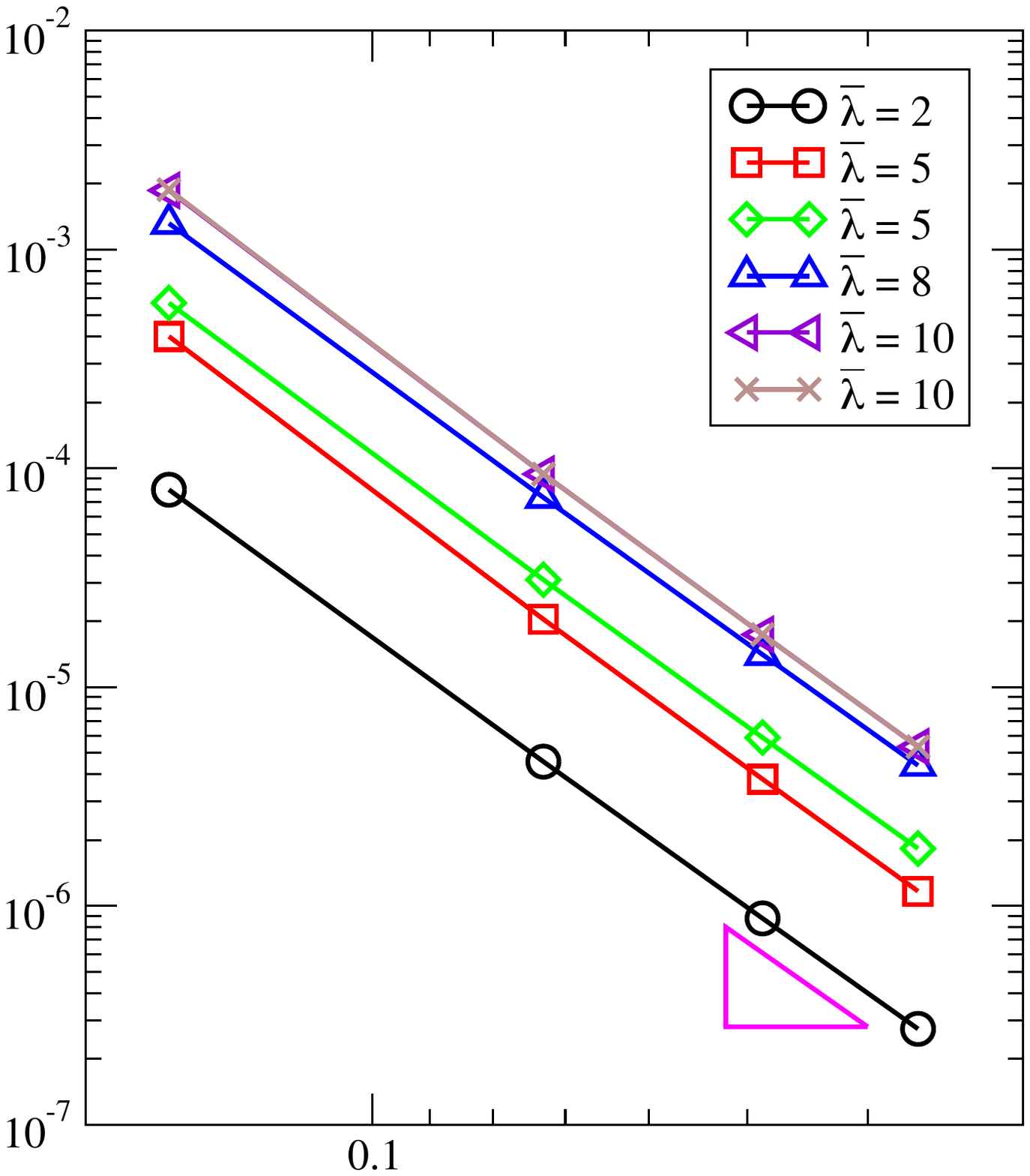}
      \put(32,-5) {\textbf{Mesh size $\mathbf{h}$}}
      \put(57,18){\textbf{4}}
    \end{overpic}
    &
    \begin{overpic}[scale=0.325]{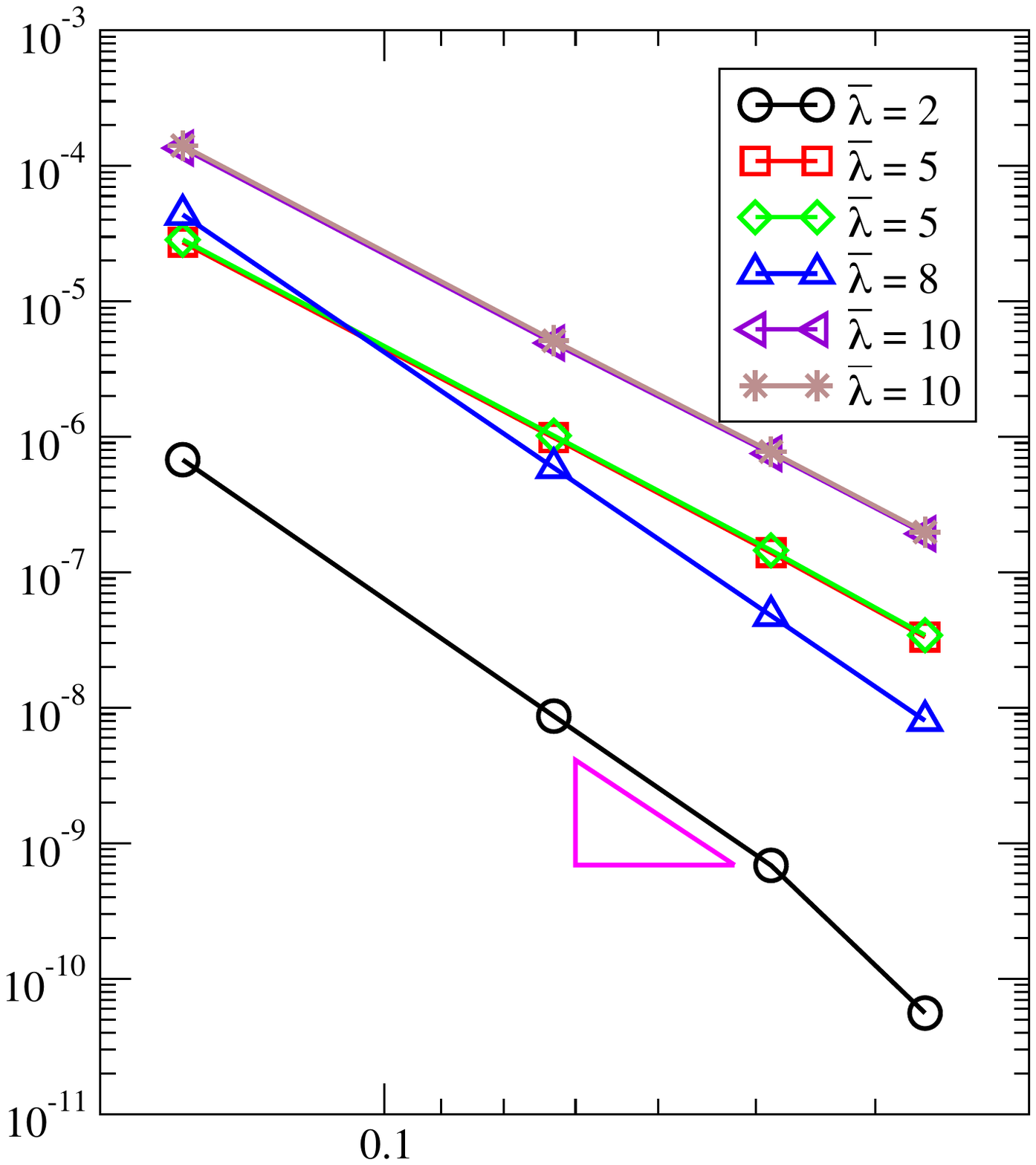}
      \put(32,-5) {\textbf{Mesh size $\mathbf{h}$}}
      \put(44,31){\textbf{6}}
    \end{overpic} 
  \end{tabular}
  \caption{Test Case~1: Convergence plots for the approximation of the
    first six eigenvalues $\lambda=\pi^2\overline{\lambda}$ using the nonconvex octagon mesh and the
    nonconforming spaces: $\VS{\hh}_{1}$ (left panel); $\VS{\hh}_{2}$
    (mid panel); $\VS{\hh}_{3}$ (right panel). The generalized
    eigenvalue problem uses the nonstabilized bilinear form $\bsh(\cdot,\cdot)$. }
  \label{fig:octa:rates}
\end{figure}

In this test case, we numerically solve the standard eigenvalue
problem with homogeneous Dirichlet boundary conditions on the square
domain $\Omega=]0,1[\times]0,1[$. 
In this case, the eigenvalues of the problem are known and are given by:
$$
\lambda = \pi^2 (n^2 + m^2)\quad n,m\in\mathbbm{N},\text{ with } n,m\neq 0.
$$
On this domain, we consider four different mesh partitionings, denoted by:
\begin{itemize}
\item \textit{Mesh~1}, mainly hexagonal mesh with continuously
  distorted cells;
\item \textit{Mesh~2}, nonconvex octagonal mesh;
\item \textit{Mesh~3}, randomized quadrilateral mesh;
\item \textit{Mesh~4}, central Voronoi tessellation.
\end{itemize}
The base mesh and the first refined mesh of each mesh sequence is
shown in Figure~\ref{fig:Meshes}.
These mesh sequences have been widely used in the mimetic finite
difference and virtual element literature, and a detailed description
of their construction can be found, for example,
in~\cite{BeiraodaVeiga-Lipnikov-Manzini:2011}.
We just mention that the last mesh sequence of central Voronoi
tessellation is generated by the code
PolyMesher~\cite{PolyMesher:2012}.
The convergence curves for the four mesh sequences above are reported
in Figures~\ref{fig:hexa:rates}, \ref{fig:octa:rates},
\ref{fig:quads:rates}, and \ref{fig:voro:rates}. 
\begin{figure}
  \centering
  \begin{tabular}{ccc}
    \begin{overpic}[scale=0.325]{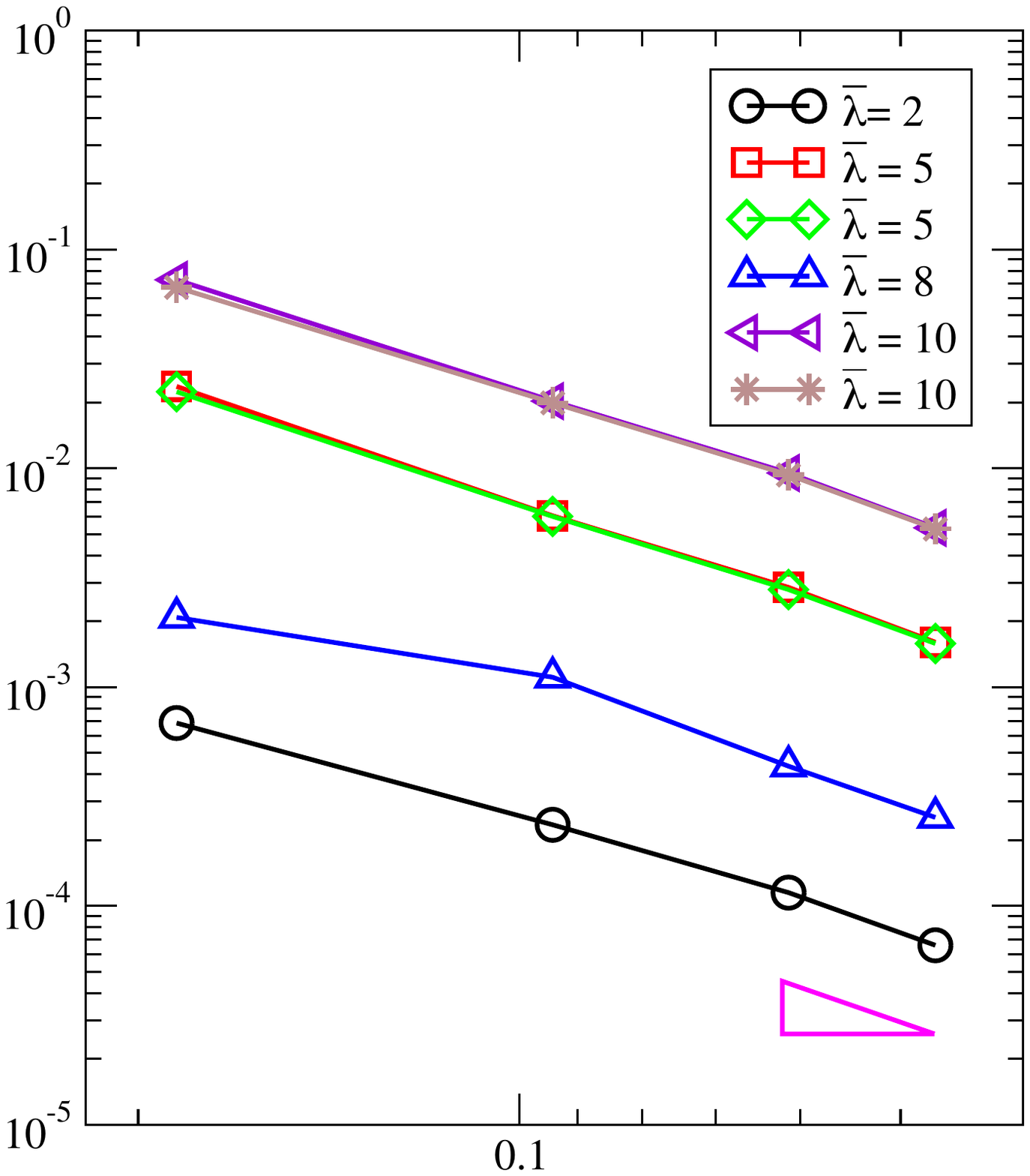}
      \put(-5,14){\begin{sideways}\textbf{Relative approximation error}\end{sideways}}
      \put(32,-5) {\textbf{Mesh size $\mathbf{h}$}}
      \put(62,18){\textbf{2}}
    \end{overpic} 
    &
    \begin{overpic}[scale=0.325]{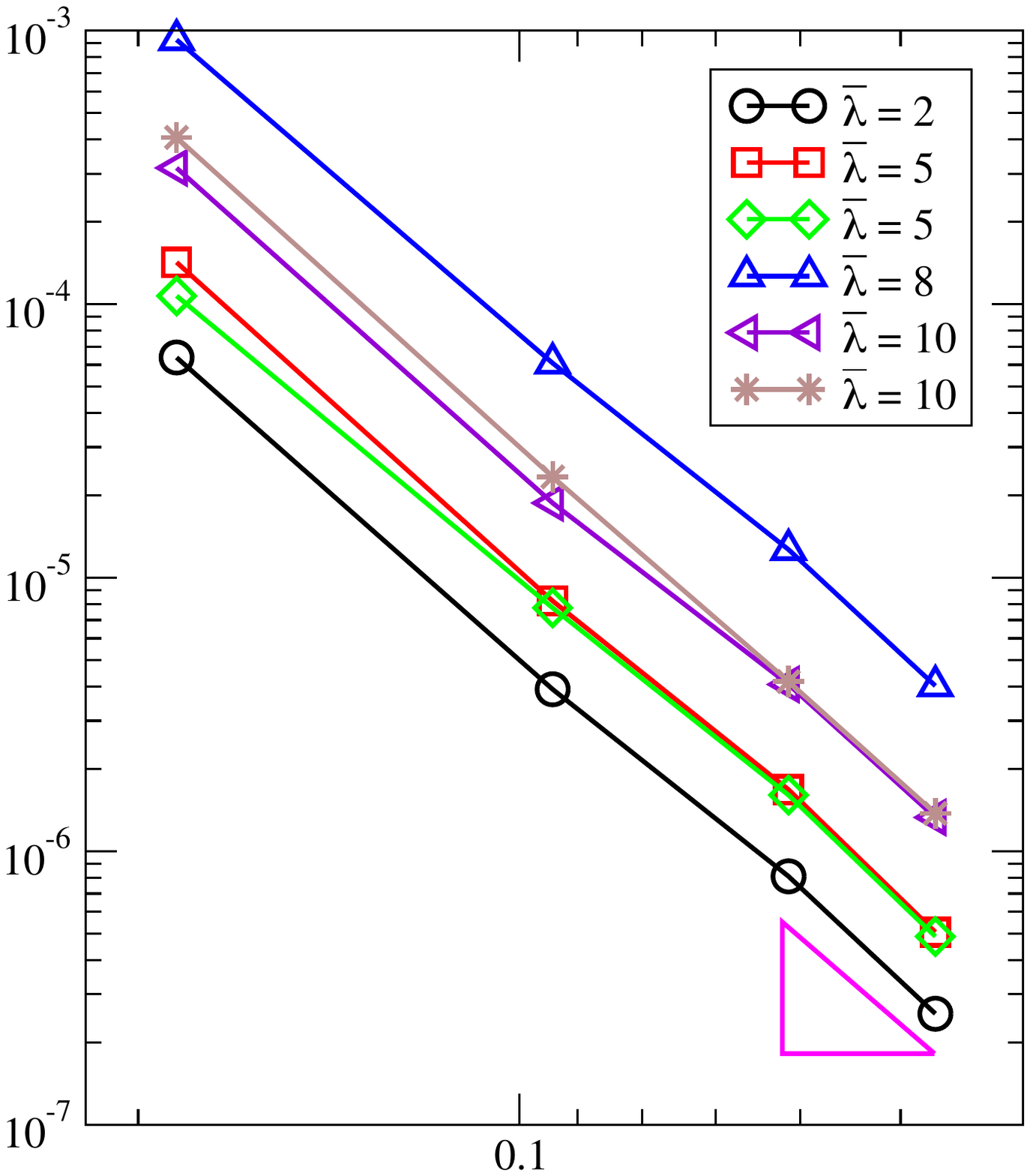}
      \put(32,-5) {\textbf{Mesh size $\mathbf{h}$}}
      \put(62,15){\textbf{4}}
    \end{overpic}
    &
    \begin{overpic}[scale=0.325]{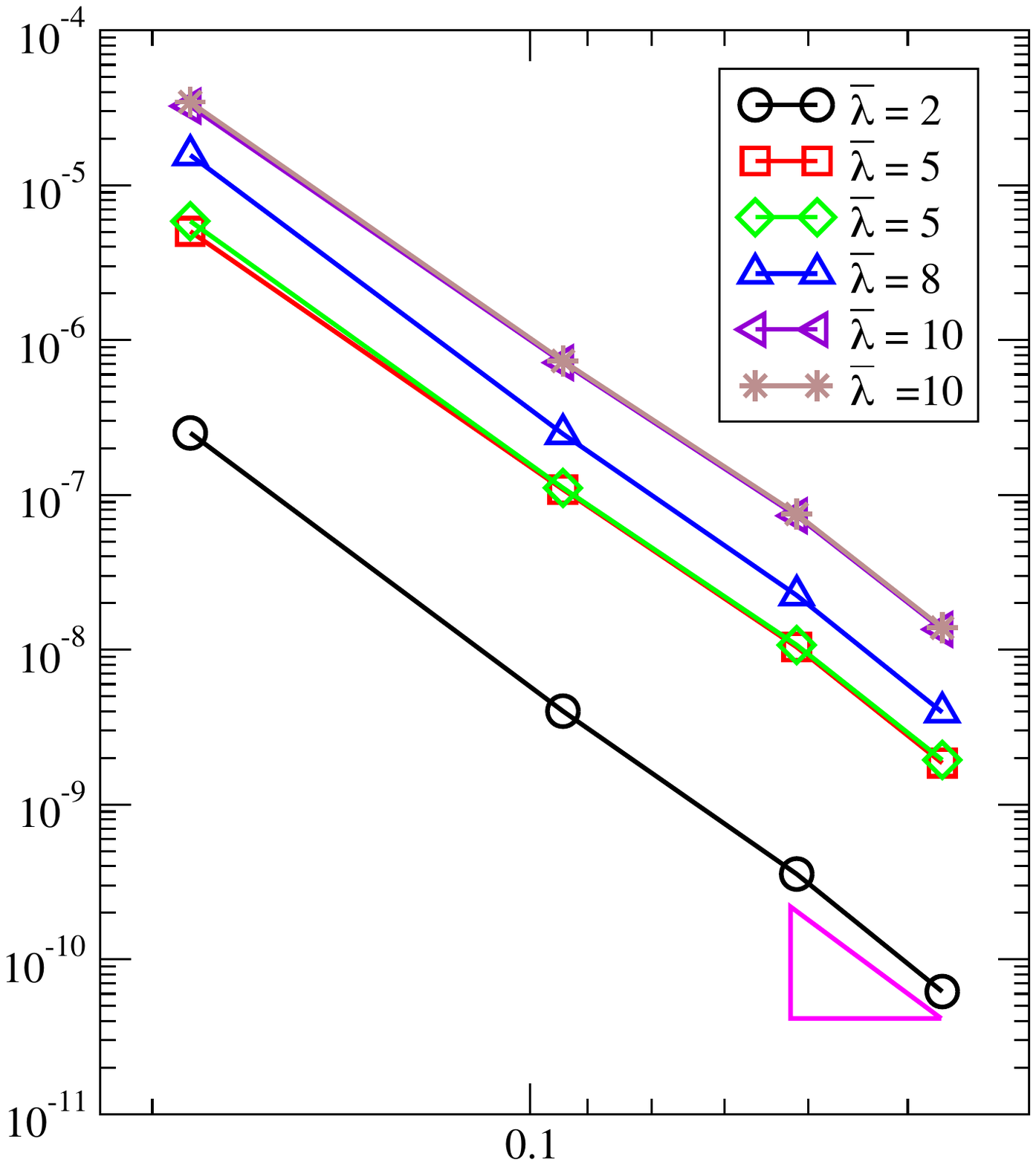}
      \put(32,-5) {\textbf{Mesh size $\mathbf{h}$}}
      \put(62,19){\textbf{6}}
    \end{overpic} 
  \end{tabular}
  \caption{Test Case~1: Convergence plots for the approximation of the
    first six eigenvalues $\lambda=\pi^2\overline{\lambda}$ using the randomized quadrilateral mesh and
    the nonconforming spaces: $\VS{\hh}_{1}$ (left panel);
    $\VS{\hh}_{2}$ (mid panel); $\VS{\hh}_{3}$ (right panel). The
    generalized eigenvalue problem uses the nonstabilized bilinear
    form $\bsh(\cdot,\cdot)$. }
  \label{fig:quads:rates}
\end{figure}

\begin{figure}
  \centering
  \begin{tabular}{ccc}
    \begin{overpic}[scale=0.35]{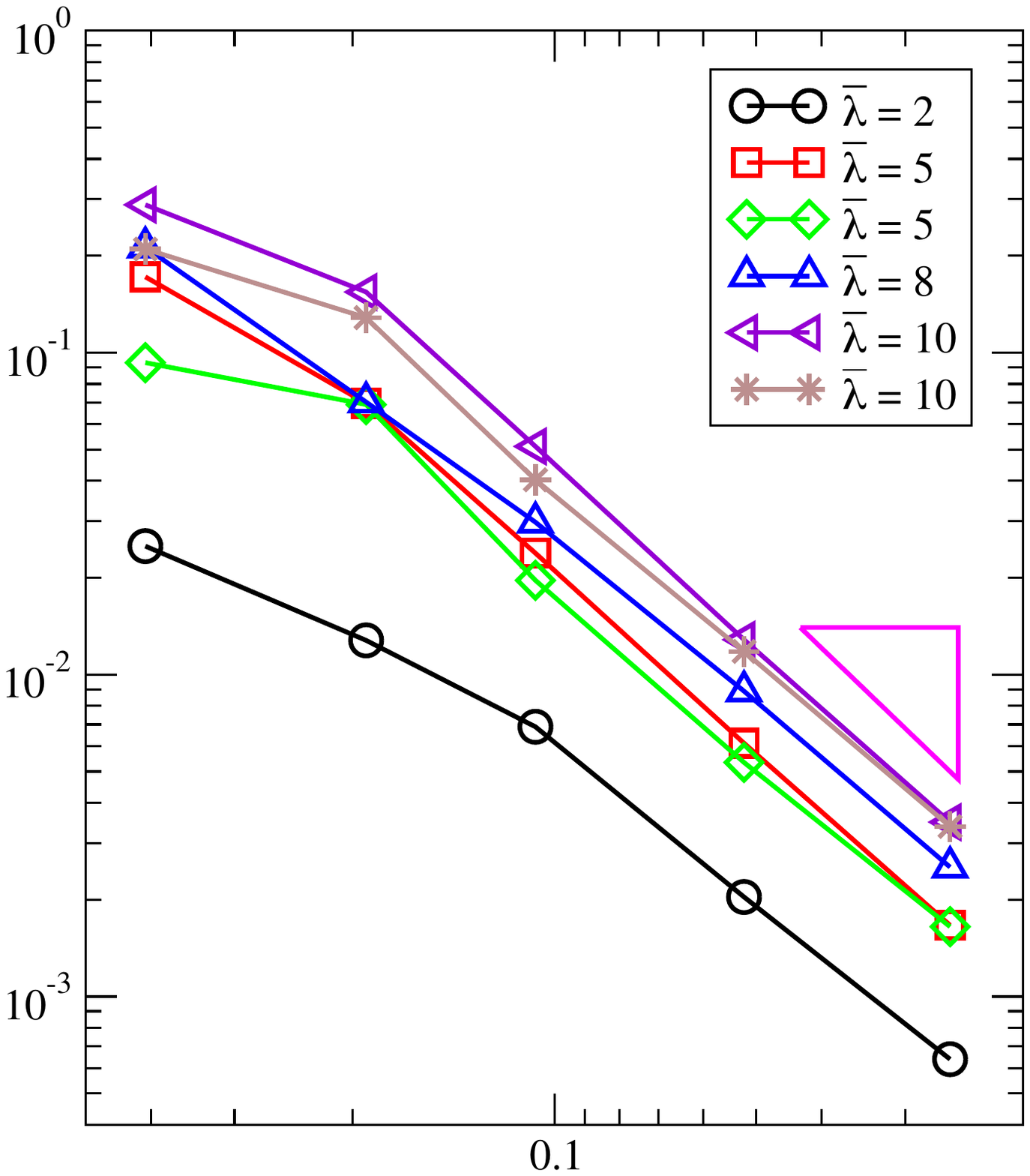}
      \put(-5,14){\begin{sideways}\textbf{Relative approximation error}\end{sideways}}
      \put(32,-5) {\textbf{Mesh size $\mathbf{h}$}}
      \put(72,49){\textbf{2}}
    \end{overpic} 
    &
    \begin{overpic}[scale=0.35]{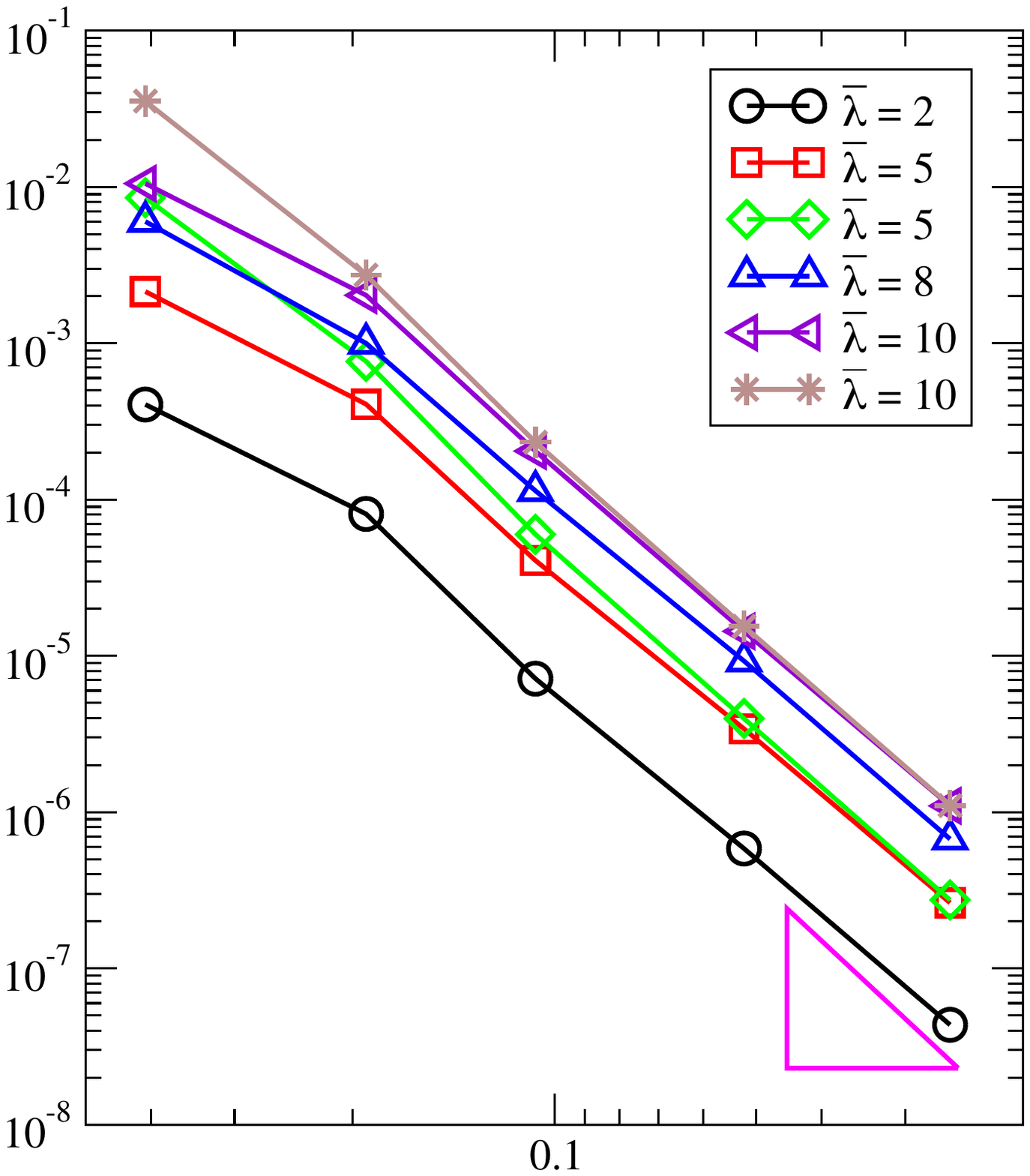}
      \put(32,-5) {\textbf{Mesh size $\mathbf{h}$}}
      \put(62,18){\textbf{4}}
    \end{overpic}
    &
    \begin{overpic}[scale=0.35]{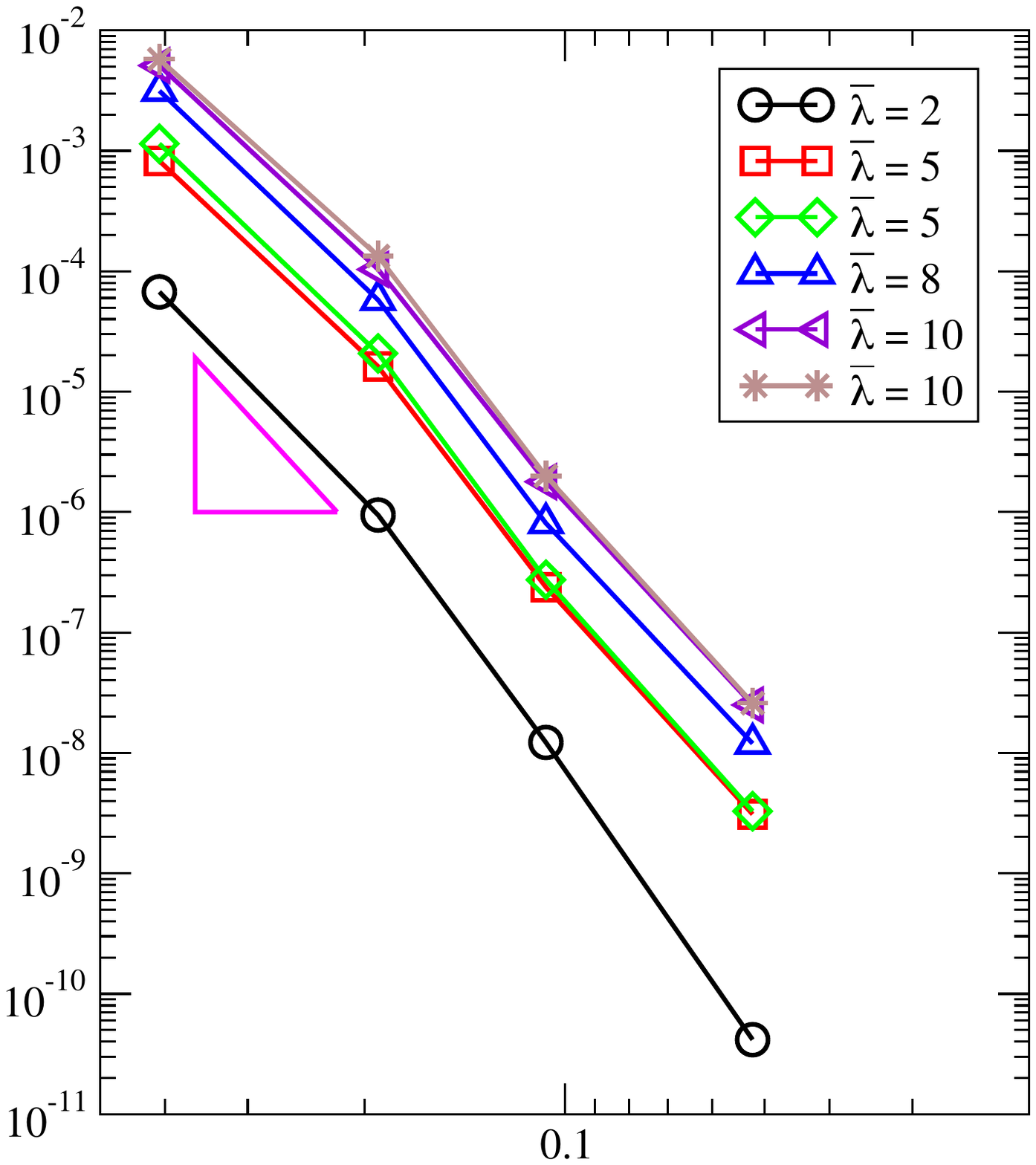}
      \put(32,-5) {\textbf{Mesh size $\mathbf{h}$}}
      \put(23,51){\textbf{6}}
    \end{overpic}
  \end{tabular}
  \caption{Test Case~1: Convergence plots for the approximation of the
    first six eigenvalues $\lambda=\pi^2\overline{\lambda}$ using the Voronoi mesh and the nonconforming
    spaces: $\VS{\hh}_{1}$ (left panel); $\VS{\hh}_{2}$ (mid panel);
    $\VS{\hh}_{3}$ (right panel). The generalized eigenvalue problem
    uses the nonstabilized bilinear form $\bsh(\cdot,\cdot)$. }
  \label{fig:voro:rates}
\end{figure}

The expected rate of convergence is shown in each panel by the
triangle closed to the error curve and indicated by an explicit label.
For these calculations, we used the VEM approximation based on the
nonconforming space $\Vhk$,
$k=1,2,3$, and the VEM formulation~\eqref{eq:discreteEigPbm} using the
nonstabilized bilinear form $\bsh(\cdot,\cdot)$.
As already observed in~\cite{Gardini-Vacca:2017} for the conforming VEM approximation, 
the same computations using formulation~\eqref{eq:discreteEigPbm2} and
the stabilized bilinear form $\bsht(\cdot,\cdot)$ produce almost identical results,
which, for this reason, are not shown here.
These plots confirm that the nonconforming VEM formulations proposed
in this work provide a numerical approximation with optimal
convergence rate on a set of representative mesh sequences, including
deformed and nonconvex cells.

\medskip
In the four plots of Figure~\ref{fig:stability-tau} we show the
dependence on the stabilization parameter $\tau$ of the value of the
first four eigenvalues $\lambda_1\slash{\pi^2}=2$,
$\lambda_2\slash{\pi^2}=5$, $\lambda_3\slash{\pi^2}=5$,
$\lambda_4\slash{\pi^2}=8$.
\begin{figure}
  \centering
  \begin{tabular}{cc}
    \begin{overpic}[scale=0.3]{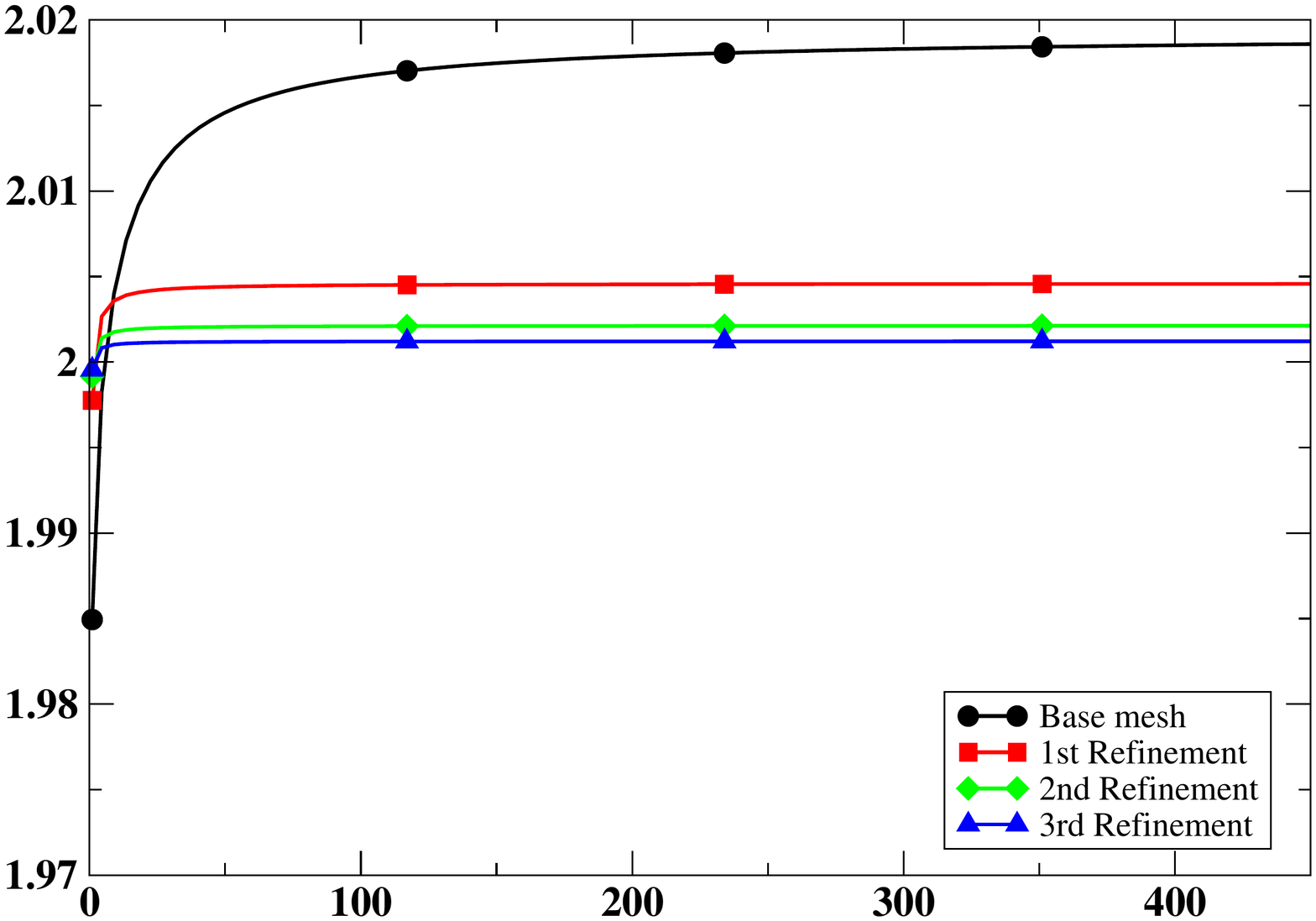}
    \put(32,-2){\textbf{Stability parameter $\tau$}}
    \end{overpic} 
    &\quad
    \begin{overpic}[scale=0.3]{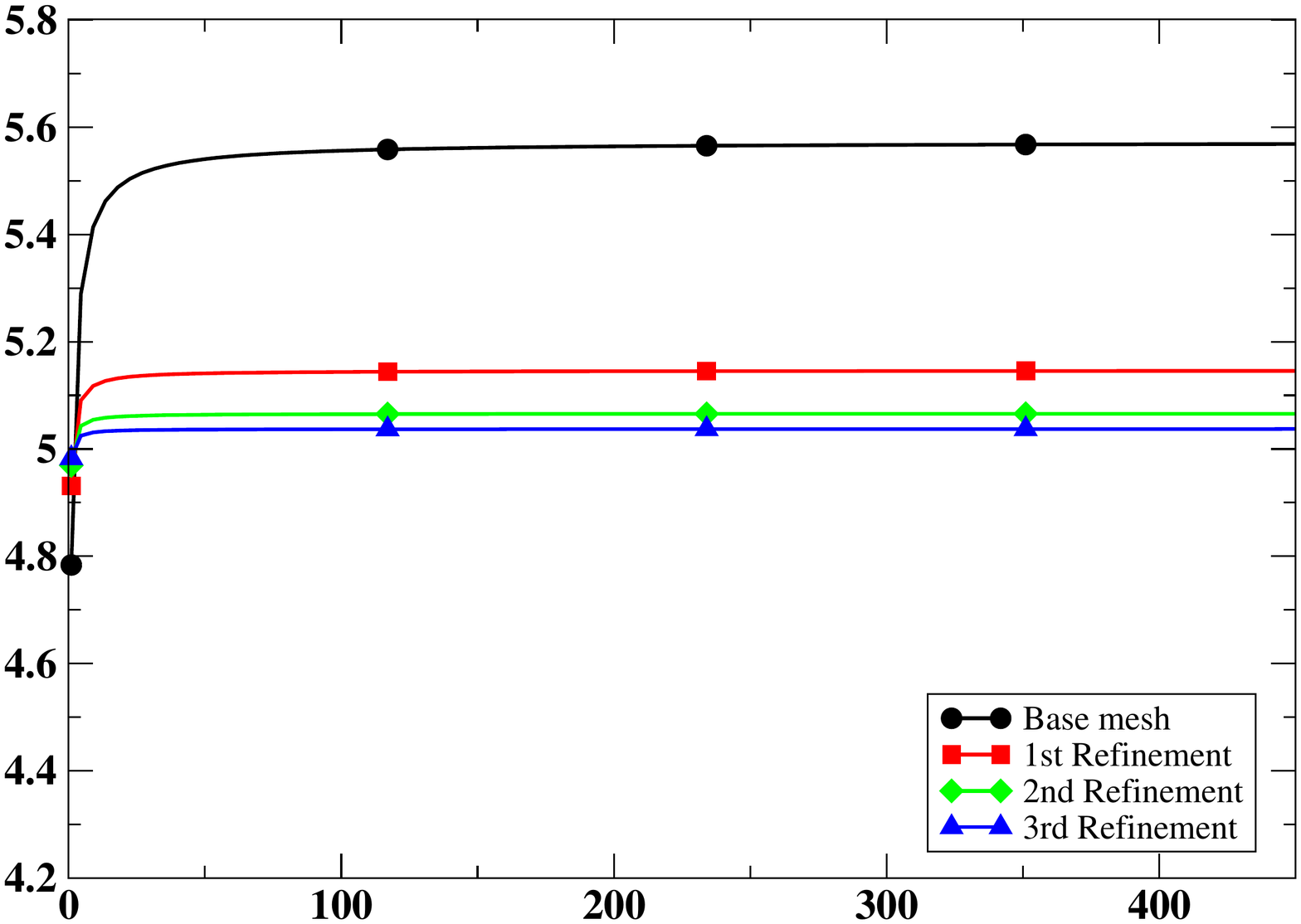}
    \put(32,-2){\textbf{Stability parameter $\tau$}}
    \end{overpic}
    \\[0.25em]
    {(a) $\lambda_1/\pi^2$} & \qquad{(b) $\lambda_2/\pi^2$}
    \\[1em]
    \begin{overpic}[scale=0.3]{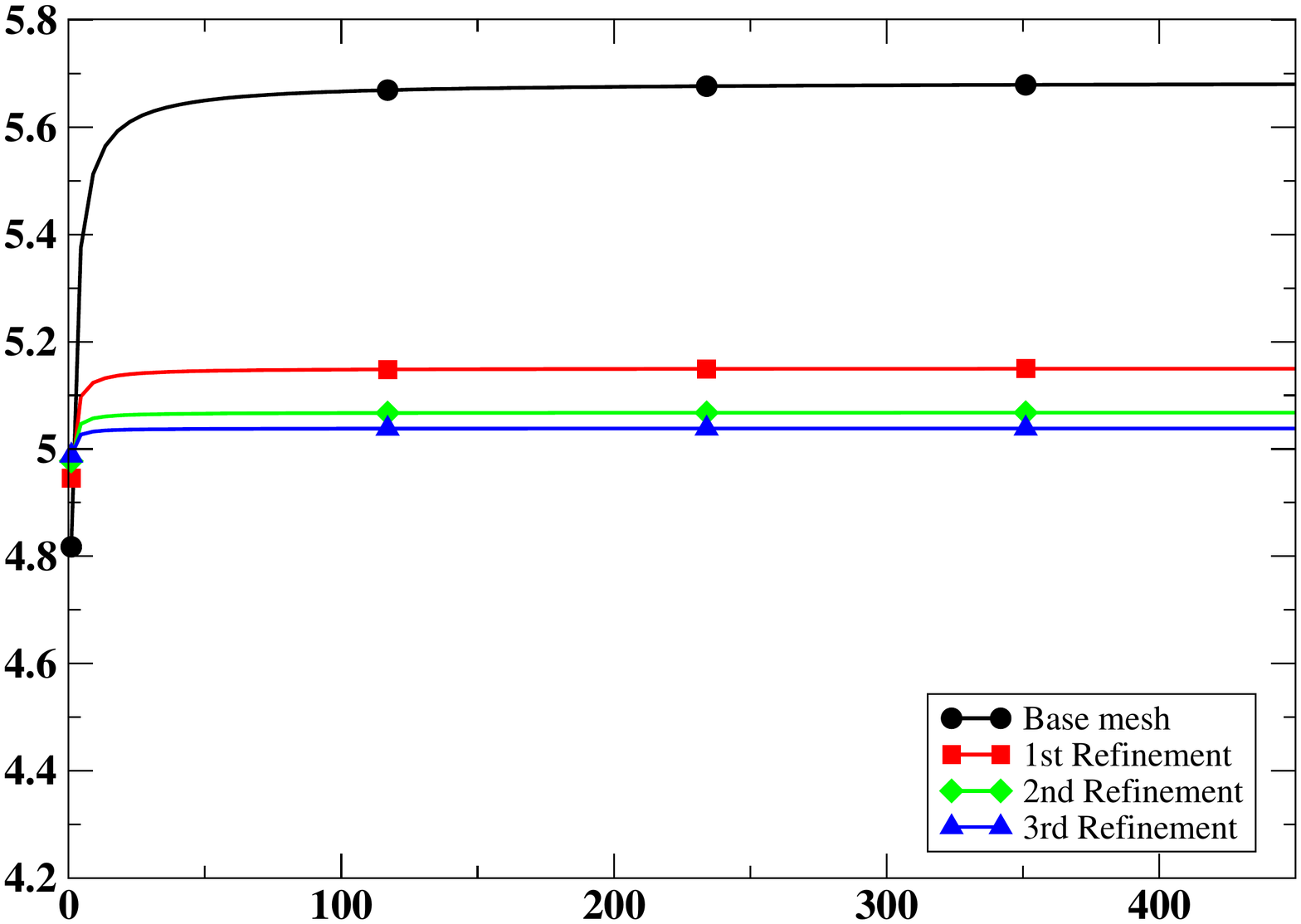}
    \put(32,-2){\textbf{Stability parameter $\tau$}}
    \end{overpic} 
    &\qquad
    \begin{overpic}[scale=0.3]{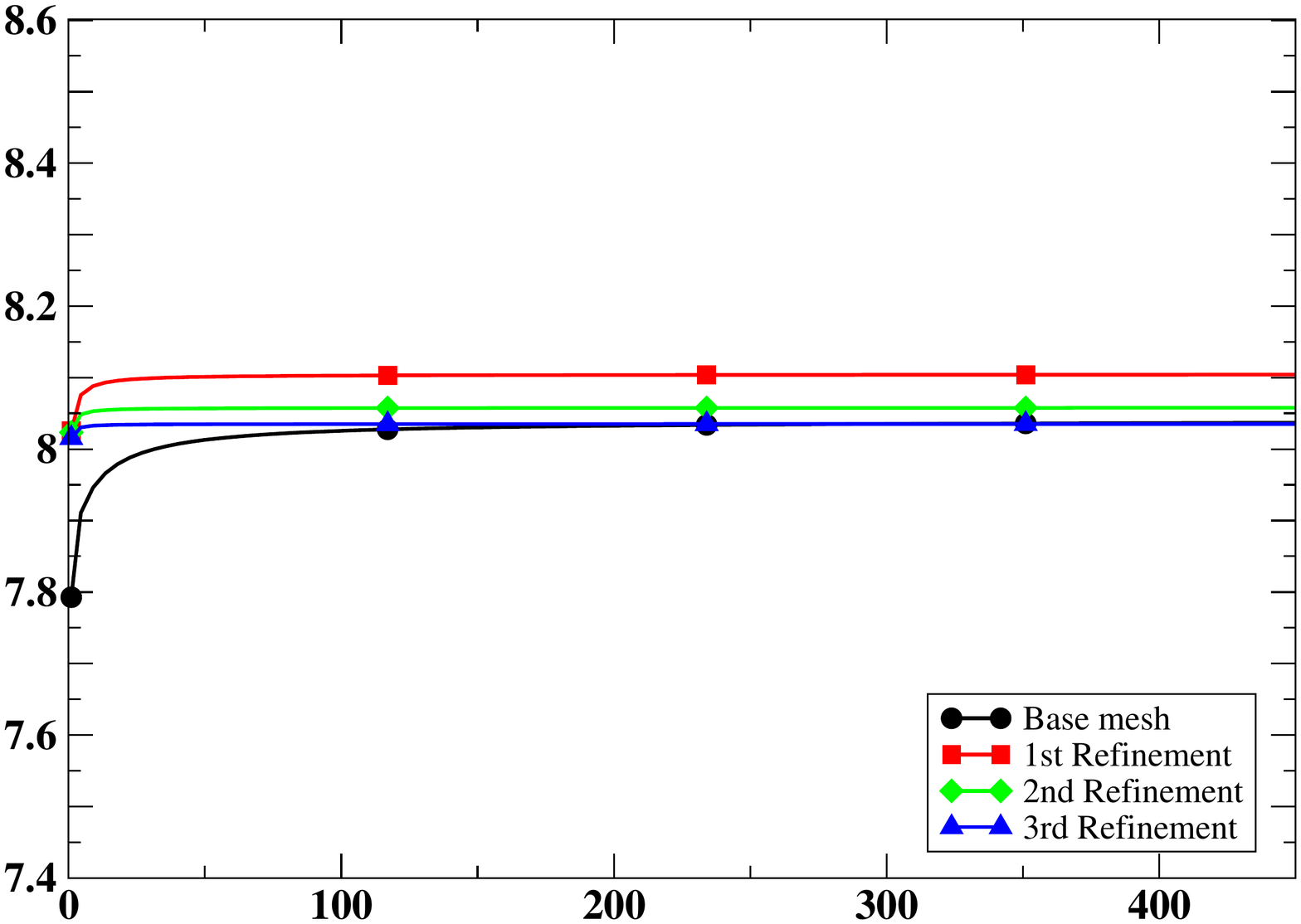}
    \put(32,-2){\textbf{Stability parameter $\tau$}}
    \end{overpic}
    \\[0.25em]
    {(c) $\lambda_3/\pi^2$} & \qquad{(d) $\lambda_4/\pi^2$}
  \end{tabular}
  \caption{Test Case~1: Eigenvalue curves versus the stability
    parameter $\tau$ using the virtual element space $\VS{\hh}_{1}$ on
    the first four meshes of the mainly hexagonal mesh family.}
  \label{fig:stability-tau}
\end{figure}
From these plots, it is clear that the eigenvalue approximation is
stable in a reasonable range of values of the parameter $\tau_{\P}$,
and that the curves of the numerical eigenvalue converge to the
corresponding eigenvalue.

\medskip
Finally, in Figure~\ref{fig:comparison} we compare the approximation
of the first eigenvalue using the conforming and nonconforming VEM
on the four mesh sequences \textit{Mesh~1}-\textit{Mesh~4}.
\begin{figure}
   \centering
   \begin{tabular}{cc}
     \begin{overpic}[scale=0.35]{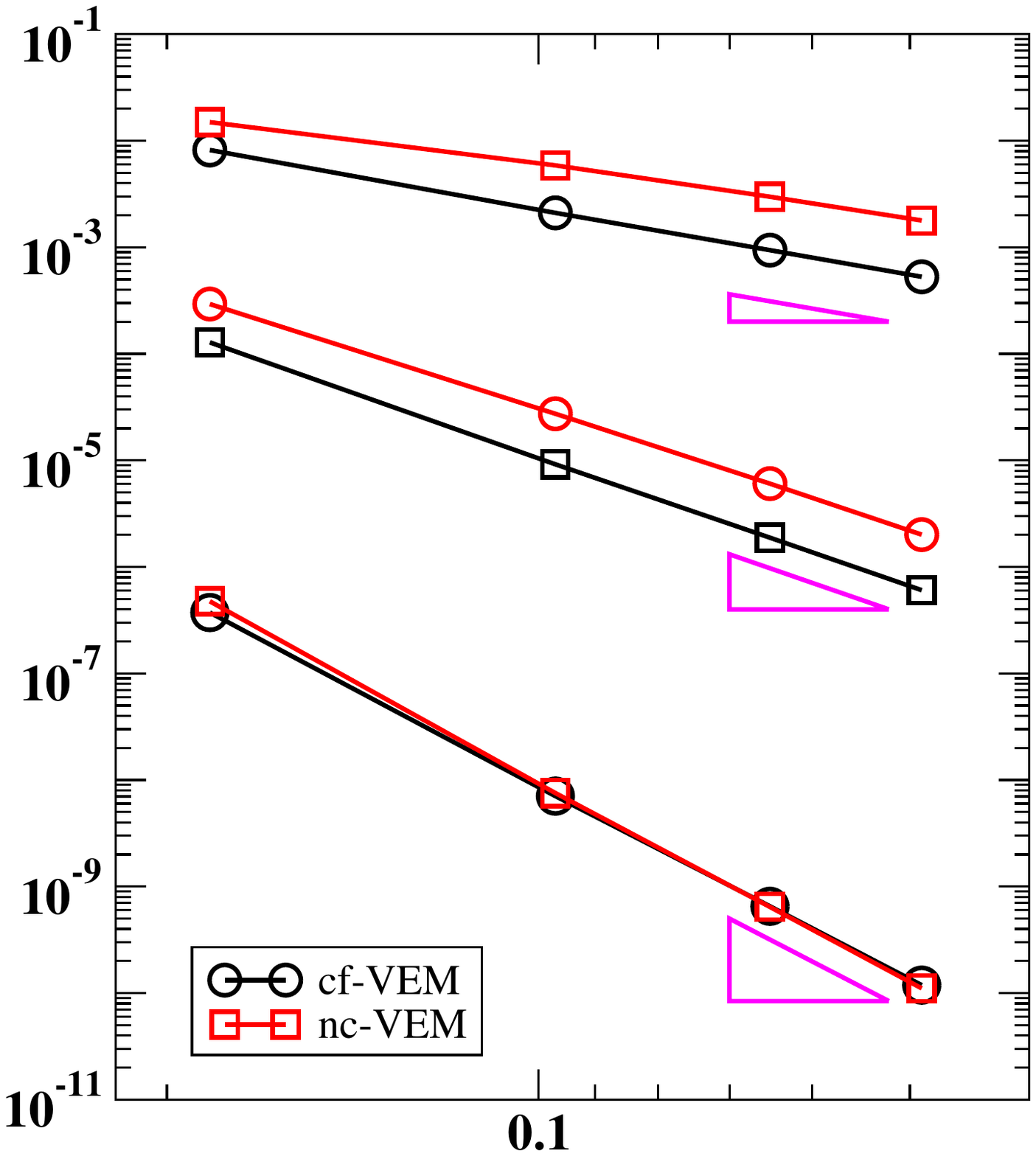}
       \put(-5,15){\begin{sideways}\textbf{Relative approximation error}\end{sideways}}
       \put(32,-5) {\textbf{Mesh size $\mathbf{\hh}$}}
       \put(56,70.5){\textbf{2}}
       \put(56,48.5){\textbf{4}}
       \put(56,18){\textbf{6}}
     \end{overpic} 
     &\qquad
     \begin{overpic}[scale=0.35]{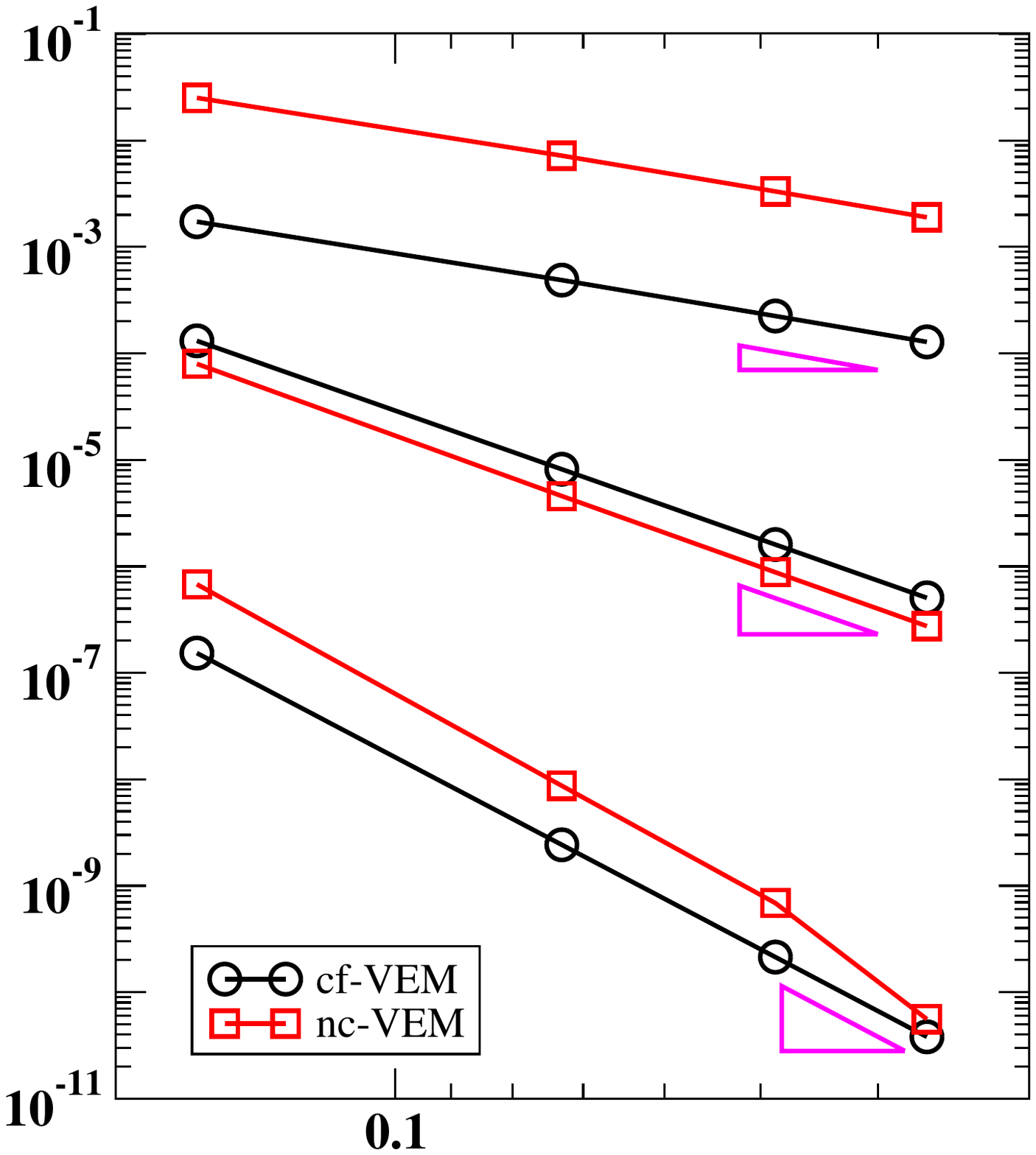}
       \put(-5,15){\begin{sideways}\textbf{Relative approximation error}\end{sideways}}
       \put(32,-5) {\textbf{Mesh size $\mathbf{h}$}}
       \put(57,66){\textbf{2}}
       \put(57,46){\textbf{4}}
       \put(61,13){\textbf{6}}
     \end{overpic}
     \\[1em]
     \begin{overpic}[scale=0.35]{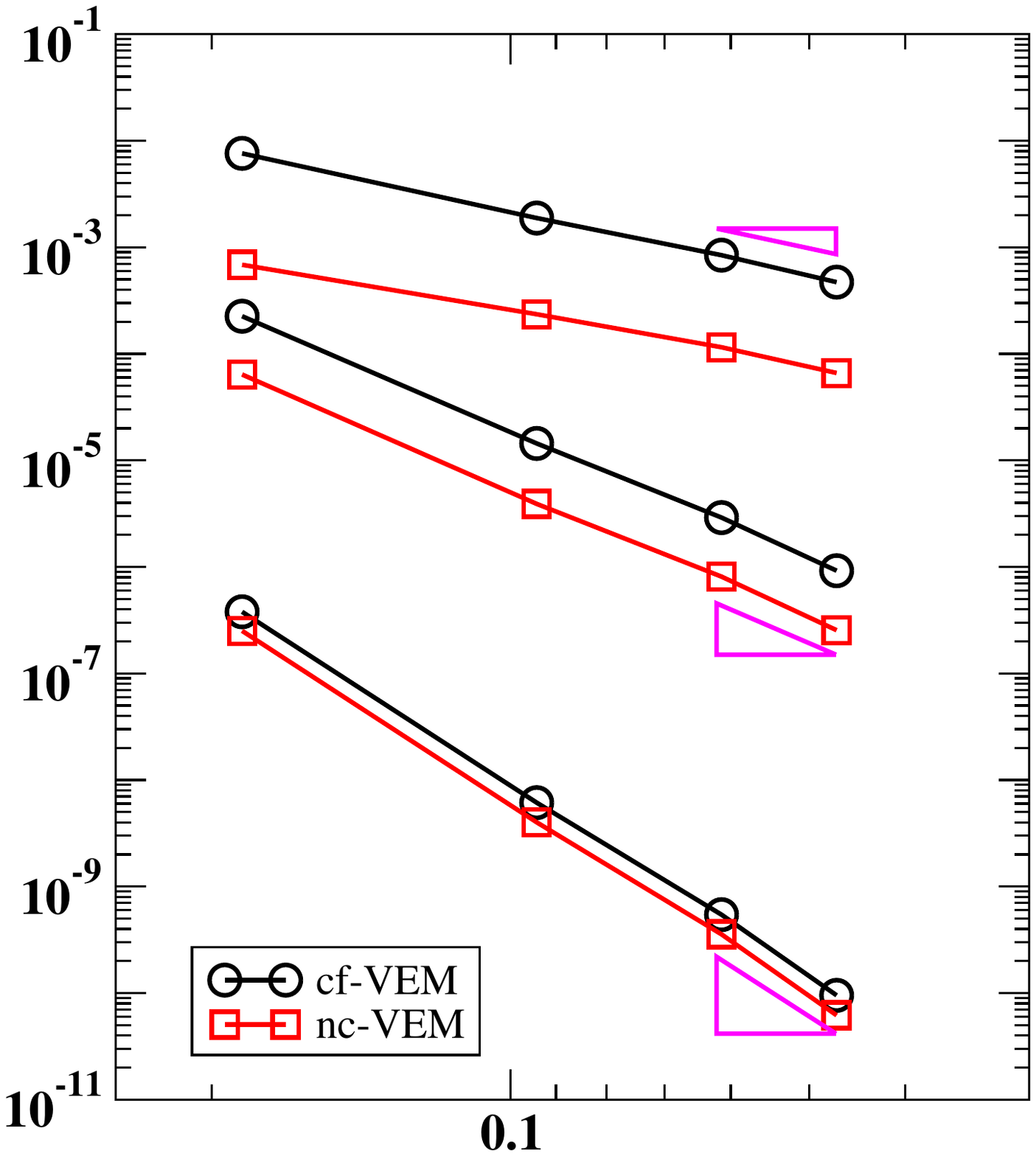}
       \put(-5,15){\begin{sideways}\textbf{Relative approximation error}\end{sideways}}
       \put(32,-5) {\textbf{Mesh size $\mathbf{\hh}$}}
       \put(64,81){\textbf{2}}
       \put(55,45){\textbf{4}}
       \put(55,15){\textbf{6}}
     \end{overpic}
     &\qquad
     \begin{overpic}[scale=0.35]{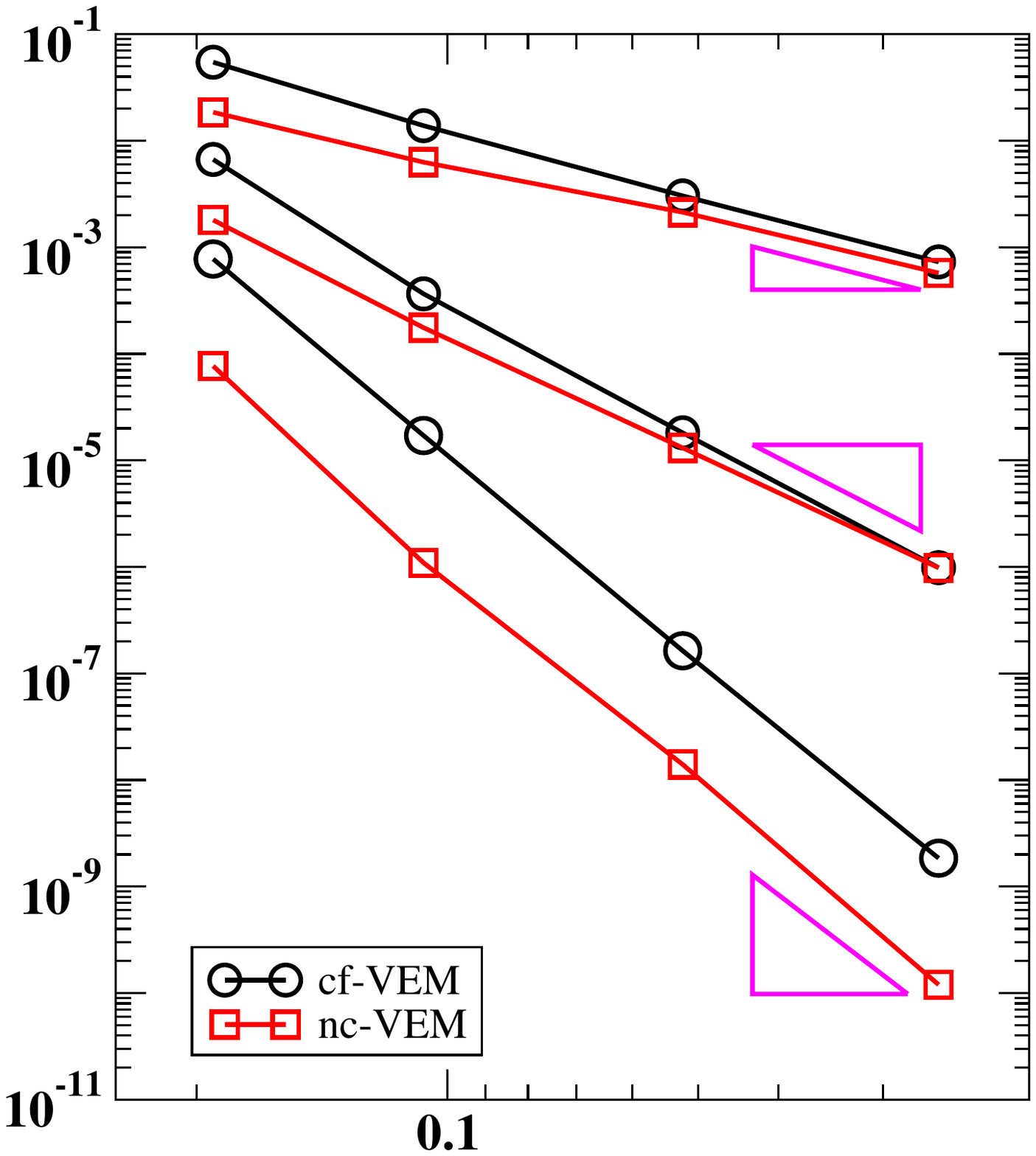}
       \put(-5,15){\begin{sideways}\textbf{Relative approximation error}\end{sideways}}
       \put(32,-5) {\textbf{Mesh size $\mathbf{h}$}}
       \put(58,74){\textbf{2}}
       \put(69,63){\textbf{4}}
       \put(59,20){\textbf{6}}
     \end{overpic}
   \end{tabular}
   \caption{Test Case~1: Comparison between the approximation of the
     first eigenvalue using the conforming and nonconforming VEM
     spaces $\VS{\hh}_{k}$, $k=1,2,3$, and the mainly hexagonal mesh
     (top-left panel); the nonconvex octagonal mesh (top-right panel);
     the randomized quadrilateral mesh (bottom-left panel); the
     Voronoi mesh (bottom-right panel).}
   \label{fig:comparison}
 \end{figure}
For all these meshes, we see that the two approximations are very
closed.

\subsection{Test~2.}

\begin{figure}
  \centering
  \begin{tabular}{cccc}
    \begin{overpic}[scale=0.2]{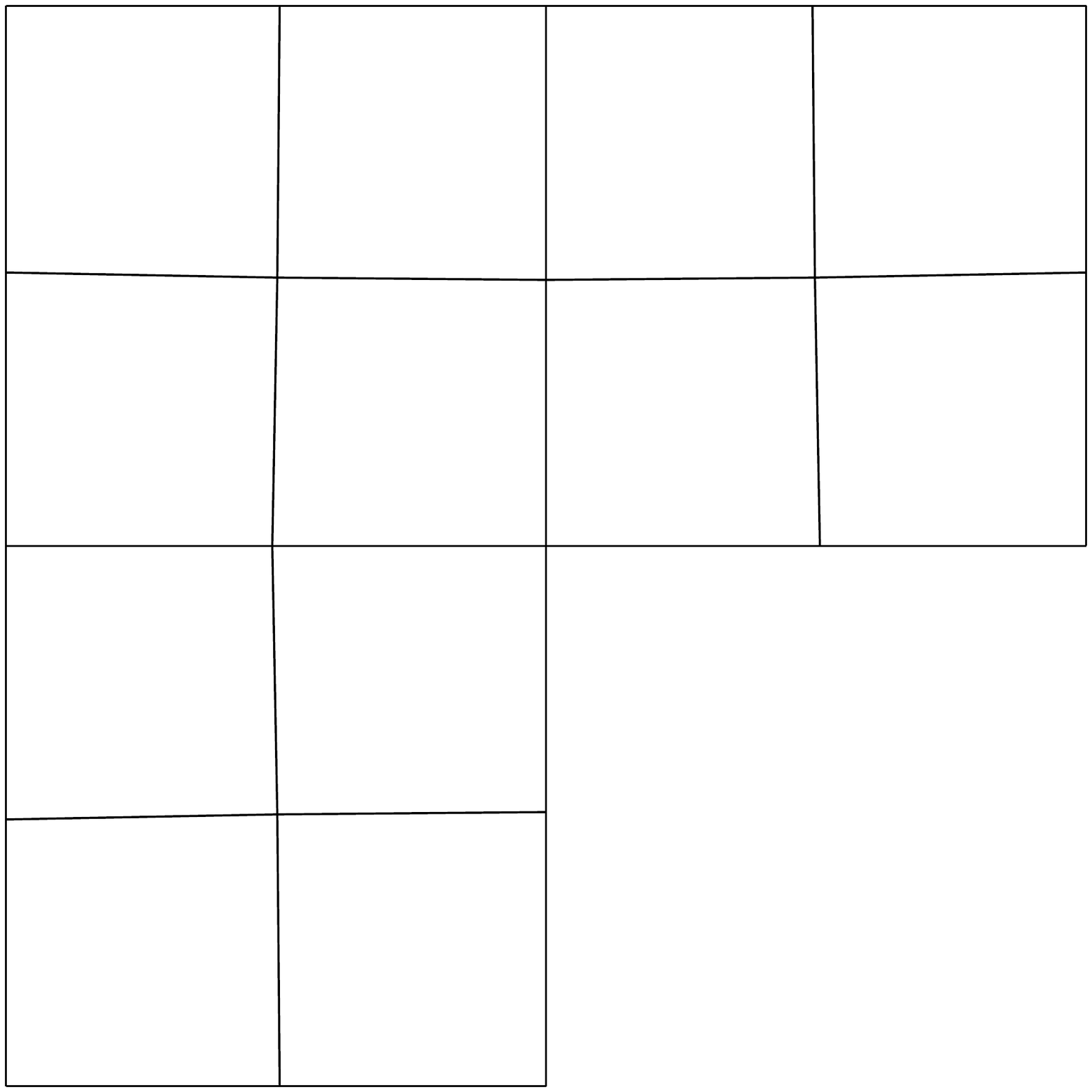}
    \end{overpic} 
    &
    \begin{overpic}[scale=0.2]{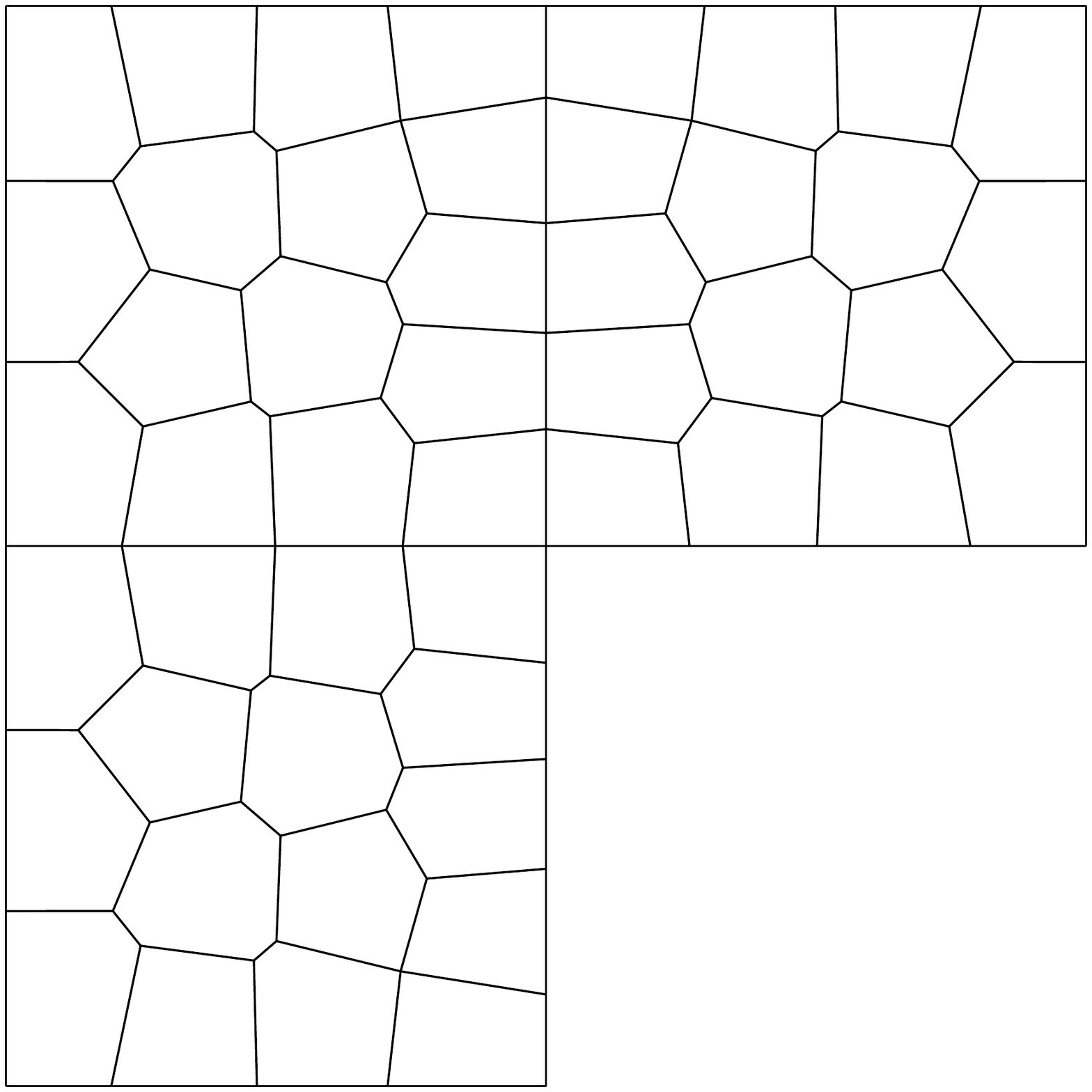}
    \end{overpic}
    &
    \begin{overpic}[scale=0.2]{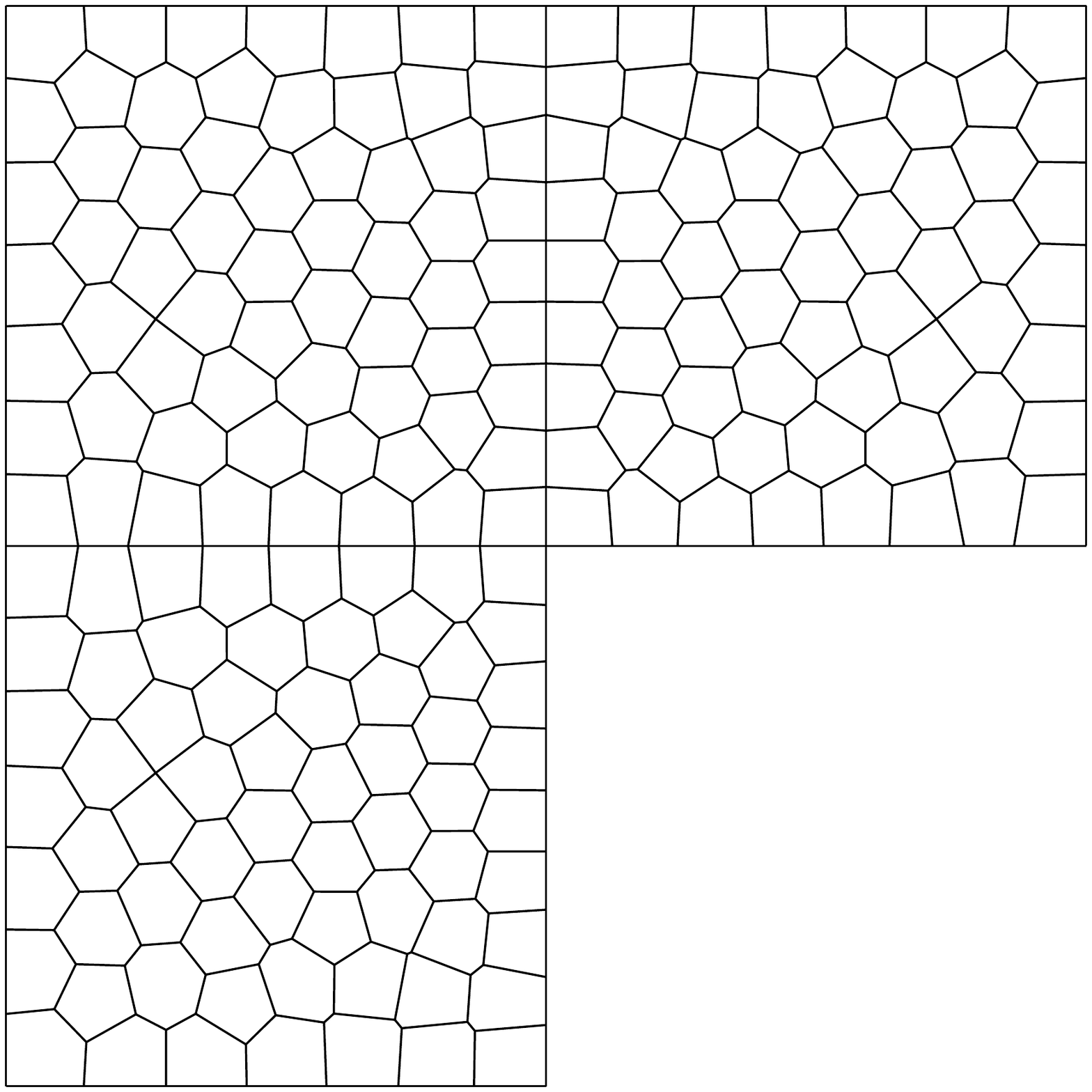}
    \end{overpic} 
    &
    \begin{overpic}[scale=0.2]{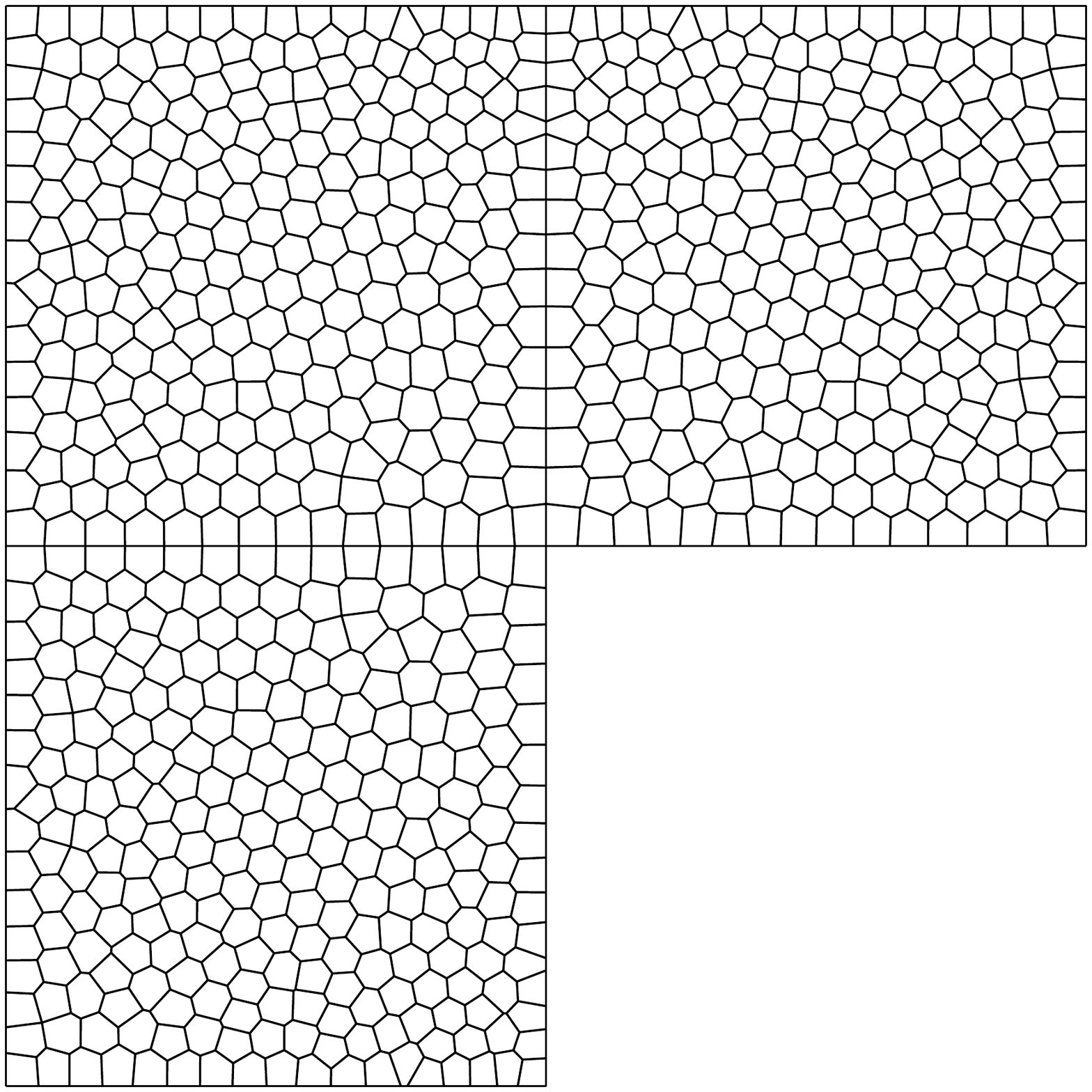}
    \end{overpic}
  \end{tabular}
  \caption{Test Case~2: Mesh sequence used in the L-shaped domain
    test.}
  \label{fig:LShape-Meshes}
\end{figure}
In this test case, we solve the eigenvalue problem with Neumann
boundary conditions on the nonconvex L-shaped domain
$\Omega=\Omega_1\backslash\Omega_0$, where
$\Omega_1=]-1,1[\times]-1,1[$, and $\Omega_0=]0,1[\times]-1,0[$.
This test problem is taken from the benchmark suite of
Reference~\cite{dauge}.
For these calculations we use the Voronoi decompositions of
Figure~\ref{fig:LShape-Meshes}.
The convergence results relative to the first and third eigenvalue are
shown in Figure~\ref{fig:lshape:rates}.
For the first eigenvalue, we observe a lower rate of convergence that
is related to the fact the corresponding eigenfunction belongs to
$\HS{1+r}(\Omega)$, with $r=2/3-\epsilon$ for any $\epsilon>0$
(see~\cite{dauge}).
\begin{figure}
  \centering
  \begin{tabular}{cc}
    \begin{overpic}[scale=0.35]{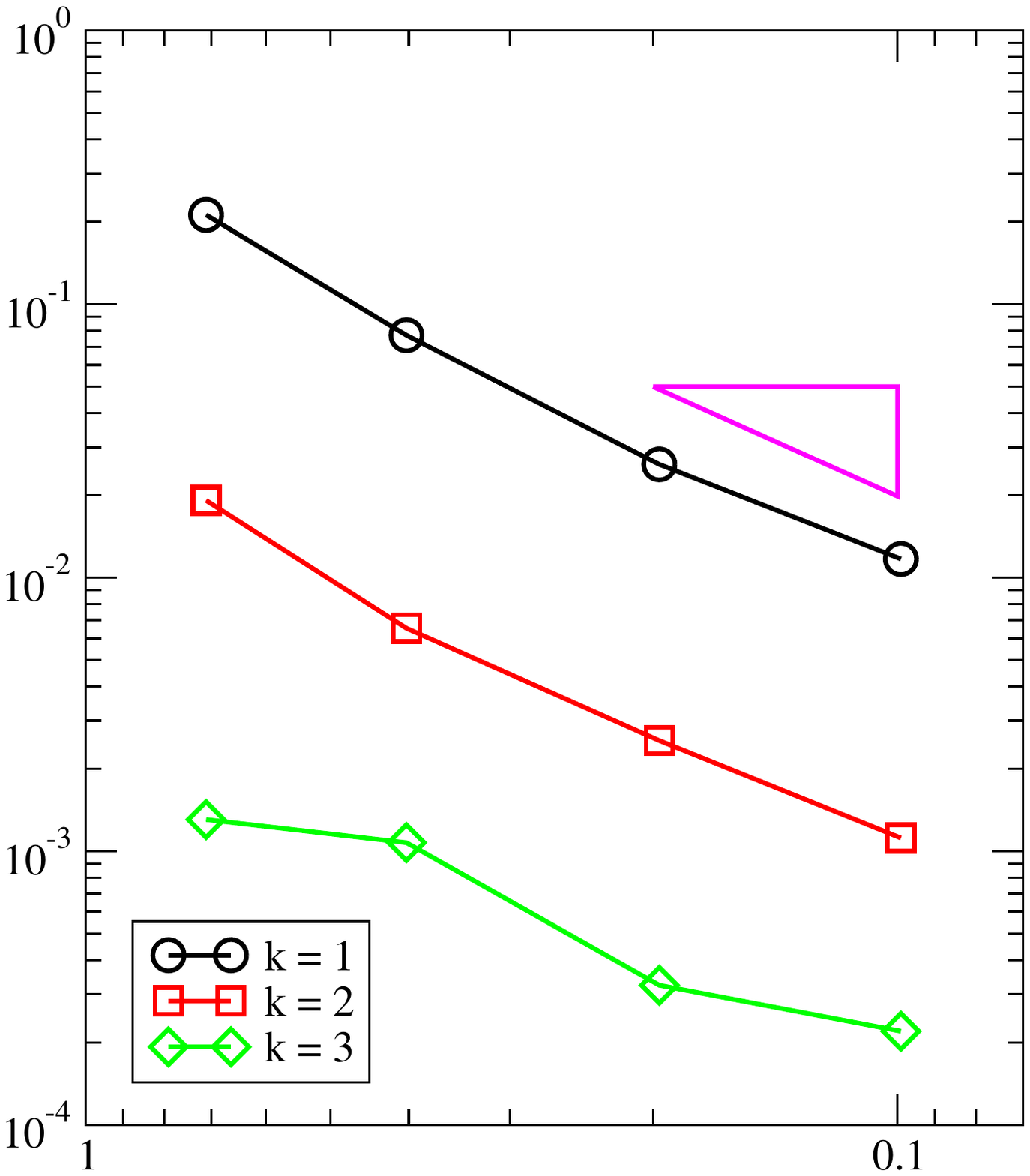}
      \put(-5,15){\begin{sideways}\textbf{Relative approximation error}\end{sideways}}
      \put(32,-5) {\textbf{Mesh size $\mathbf{\hh}$}}
      \put(62,69){\textbf{4/3}}
    \end{overpic} 
    &\qquad
    \begin{overpic}[scale=0.35]{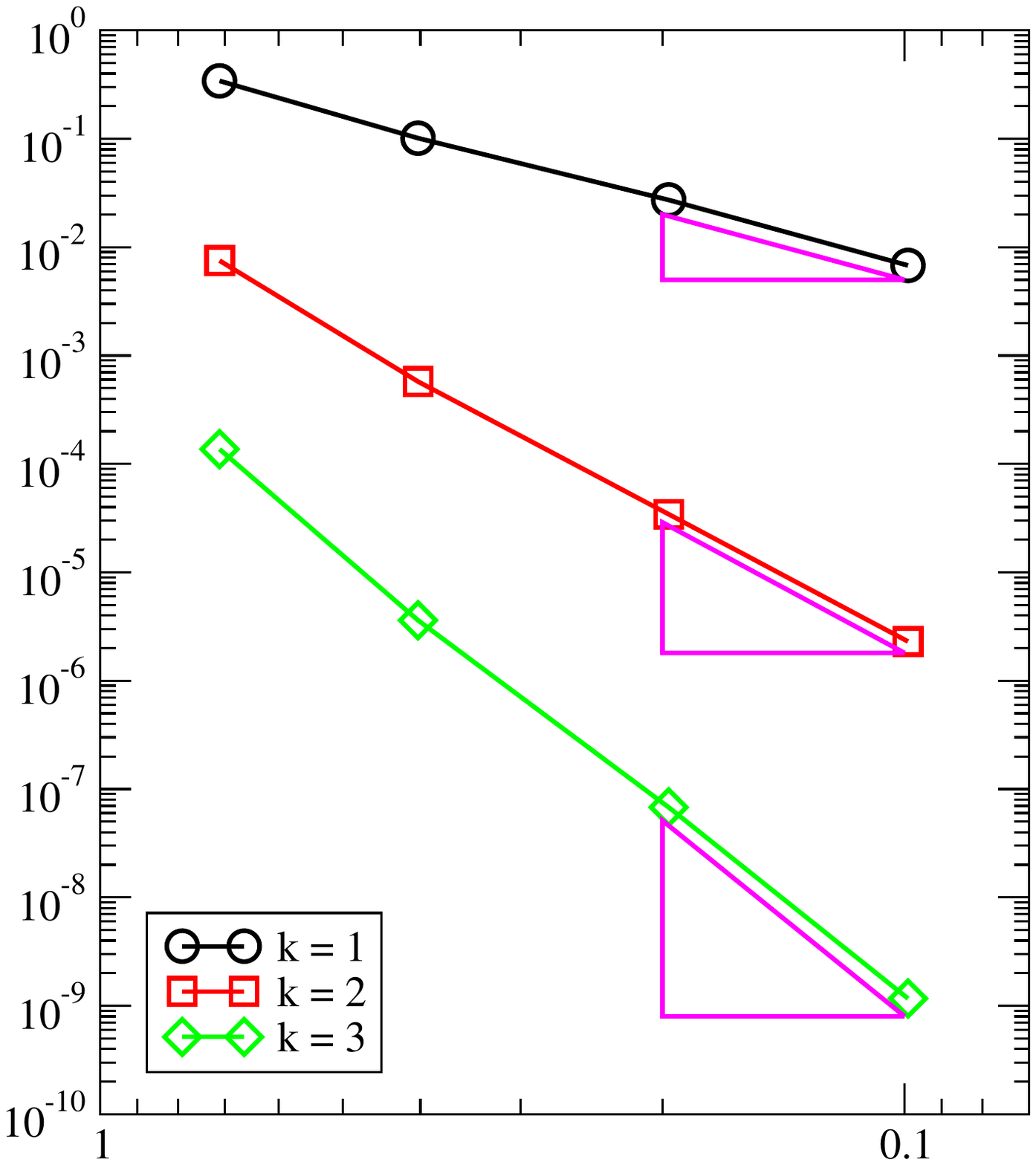}
      \put(-5,15){\begin{sideways}\textbf{Relative approximation error}\end{sideways}}
      \put(32,-5) {\textbf{Mesh size $\mathbf{h}$}}
      \put(51,76){\textbf{2}}
      \put(51,49){\textbf{4}}
      \put(51,22){\textbf{6}}
    \end{overpic}
  \end{tabular}
  \medskip
  \caption{Test Case~2: convergence curves of the first eigenvalue
    (left panel) and the third eigenvalue (right panel) using the
    nonconforming VEM space $\VS{\hh}_{k}$, $k=1,2,3$,}
  \label{fig:lshape:rates}
\end{figure}
Instead, the third eigenvalue is analytical and the optimal order of
convergence is obtained, which can be seen by comparing the slopes of
the error curves and the corresponding theoretical slopes reported on
the plot.
These results confirm the convergence analysis of the previous section
and the optimality of the method also on nonconvex domains using
polygonal meshes.

\section{Conclusions}
\label{sc:conclusions}

We analysed the nonconforming VEM for the approximation 
of elliptic eigenvalue problems. The nonconforming scheme, contrary to the 
conforming one, allows to use the same formulation both for the 
two- and the three-dimensional case. 
We proposed two different discrete formulations, which differ for the discrete 
form approximating the $L^2$-inner product. In particular, we considered 
both a nonstabilized form and a stabilized one. 
We showed that both formulations provide a correct approximation of the spectrum 
and we proved optimal \emph{a priori} error estimates for the approximations of 
eigenfunctions both in the $L^2$-norm and the discrete energy norm,  
and the usual double order of 
convergence of the eigenvalues. Eventually, we presented a 
wide set of numerical tests which confirm the theoretical results. 

\section*{Acknowledgements}
The work of the second author was partially supported by the
Laboratory Directed Research and Development Program (LDRD),
U.S. Department of Energy Office of Science, Office of Fusion Energy
Sciences, and the DOE Office of Science Advanced Scientific Computing
Research (ASCR) Program in Applied Mathematics Research, under the
auspices of the National Nuclear Security Administration of the
U.S. Department of Energy by Los Alamos National Laboratory, operated
by Los Alamos National Security LLC under contract DE-AC52-06NA25396.
The third author was partially supported by the European Research Council through the H2020
Consolidator Grant (grant no. 681162) CAVE - Challenges and Advancements in Virtual Elements. This support
is gratefully acknowledged.

\bibliographystyle{abbrv}
\bibliography{vem-nc-eigv}

\end{document}